\newcommand{\spec}{\mathrm{Sp}}
\DeclareMathOperator{\capacity}{Cap}
\DeclareMathOperator{\energy}{Enrg}
\newcommand{\MH}{\mathcal{M}_{\mathrm{H}}}
\newcommand*{\dd}{{\,\mathrm{d}}}
\newcommand*{\dist}{{\mathrm{dist}}}
\definecolor{rev1}{HTML}{cb270f}
\definecolor{rev2}{HTML}{1c8235}
\numberwithin{equation}{section}
\newtheorem{theorem}{Theorem}
\newtheorem{lemma}{Lemma}
\newtheorem{corollary}{Corollary}
\newtheorem{proposition}{Proposition}
\newtheorem{definition}{Definition}
\theoremstyle{definition}
\newtheorem{example}{Example}
\numberwithin{theorem}{section}
\numberwithin{lemma}{section}
\numberwithin{corollary}{section}
\numberwithin{proposition}{section}
\numberwithin{definition}{section}
\numberwithin{example}{section}
\title[Computing Spectral Size]{\vspace{-5mm}Computing Spectral Size:\\Rigorous Algorithms and the Limits of Computation}
\date{}
\author{\vspace{-4mm}Matthew J. Colbrook} 
\address{Department of Applied Mathematics and Theoretical Physics, University of Cambridge}
\email{m.colbrook@damtp.cam.ac.uk}
\author{Mark Embree} 
\address{Department of Mathematics, Virginia Tech, Blacksburg, VA 24061, USA}
\email{embree@vt.edu}
\author{Jake Fillman} 
\address{Department of Mathematics, Texas A\&M University, College Station, TX 77845, USA.}
\email{fillman@tamu.edu}
\begin{document}
\vspace{-5mm}

\begin{abstract}\vspace{-2mm}
Many structures in mathematical physics and dynamics exhibit intricate fractal geometry. Such behavior appears prominently in quantum mechanics and materials science through spectra of aperiodic and quasicrystalline operators, where questions of ``size'' (Lebesgue measure, fractal dimension, spectral gaps, etc.) are central. Yet the lack of rigorous computational tools for analyzing these quantities limits both theory and application. Naïve truncation often fails, and there is no overarching framework to explain what can, and cannot, be computed.

We develop a unified program for the rigorous computation of spectral size for bounded self-adjoint operators, based on local spectral exclusions and adaptive covers. This constructive framework yields algorithmically optimal methods (under natural computational assumptions) that bridge spectral theory with computation to address problems previously deemed intractable. Their complexity is classified within the Solvability Complexity Index (SCI) hierarchy, extending Smale's program on the limits of computation. Sharp computational lower bounds are established through impossibility results for limit-periodic Schrödinger operators constructed from adversarial potentials. The methods enable state-of-the-art rigorous computations for one- and two-dimensional aperiodic systems, and pinpoint problems where numerics can feed directly into computer-assisted proofs. Beyond spectral analysis, they apply broadly to computing measures of size for general closed sets, opening new directions in the computational study of complex geometric structures.
\end{abstract}\vspace{-2mm}

\keywords{Computational spectral problem, Solvability Complexity Index hierarchy, numerical linear algebra and operator theory, computer-assisted proofs, aperiodic and quasicrystalline operators, foundations of scientific computation\\
\indent\textup{2020} \textit{Mathematics Subject Classification.} {65J10, 46N40, 47A10, 47B93, 47N50, 52C23, 81Q10}}

\maketitle
\vspace{-10mm}
\renewcommand*\contentsname{Contents\vspace{-3mm}}
\small
\setcounter{tocdepth}{2}
\renewcommand{\baselinestretch}{0.8}
\tableofcontents
\renewcommand{\baselinestretch}{1.1}
\normalsize

\linespread{1.1}

\section{Introduction}\label{intro}
In the 1980s, Shechtman \textit{et al.} \cite{SBGC1984PRL} discovered quasicrystals, an exciting type of matter with unique physical properties.\footnote{\small A natural quasicrystal icosahedrite was discovered in 2009 \cite{bindi2009natural}. Shechtman subsequently received the Nobel Prize in Chemistry in 2011 for work that ``led to a paradigm shift within chemistry.''} 
Unlike ordinary crystals, quasicrystals lack periodic structure; instead, they exhibit emergent hierarchical structures, leading to interesting spectral features linked to distinctive physical properties \cite{dean2013hofstadter,novoselov20162d,bandres2016topological,tanese2014fractal,levi2011disorder,stadnik2012physical}. Applications of quasicrystals now include hydrogen storage for renewable energy \cite{kweon2022quantitative}, superconductivity \cite{kamiya2018discovery},
surfaces and composites \cite{thiel2008quasicrystal},
high-strength materials \cite{liu2022growth},
low-friction materials \cite{yadav2018quasicrystal},
scattering and metamaterials \cite{gerke2010aperiodic,wang2022mechanical}, and
photonics \cite{vardeny2013optics}.
More generally, almost-periodic operators often display a fractal spectrum and spectral properties that are hallmarks of exotic physical behavior, such as unusual transport phenomena where ordinary diffusion fails, the quantum Hall effect, and Cantor-like (fractal) spectra. These systems have garnered considerable interest, and their analysis is highly intricate, requiring sophisticated mathematical tools to prove fundamental results; see, e.g., \cite{avila2009ten, avila2017spectral, avila2006reducibility, AvilaYouZhou2017DMJ, BourgainGold2000Ann, BouJit2002Invent}.

Computational observations have frequently guided theory in this area, raising questions that intrigue both mathematicians and physicists. A classic example is provided by the \emph{almost Mathieu operator} on $\ell^2(\mathbb{Z})$:
\begin{equation}
[H_{\lambda,\alpha,\theta}\,\psi](n)
=\psi(n-1) + \psi(n+1) + 2\lambda\cos(2\pi n\alpha+\theta)\,\psi(n),
\end{equation}
where $\lambda,\alpha,\theta\in\mathbb{R}$. Its spectrum $\mathrm{Sp}_+(\alpha,\lambda)$ forms the Hofstadter butterfly (\cref{AMfig1}, left), one of the most iconic fractal images in physics. The \emph{Aubry--Andr\'e conjecture} \cite{aubry1980analyticity}, that the Lebesgue measure of the spectrum for irrational $\alpha$ equals $4|1 - |\lambda||$, was based on numerical evidence (see the thinning of the spectrum in the right panel of \cref{AMfig1} as $\lambda \uparrow 1$).  Its pursuit sparked significant breakthroughs in mathematics, with contributions from many authors \cite{HelffSjos1989MSMF,avron1990measure,Last1993CMP,last1994zero,JK02} leading to its final resolution in \cite{avila2006reducibility}. This problem is by no means unique in this regard, and further examples are discussed in \cref{sec:math_prelims}.

\begin{figure}
\centering
\raisebox{-0.5\height}{\includegraphics[width=0.48\textwidth,trim={0mm 0mm 0mm 0mm},clip]{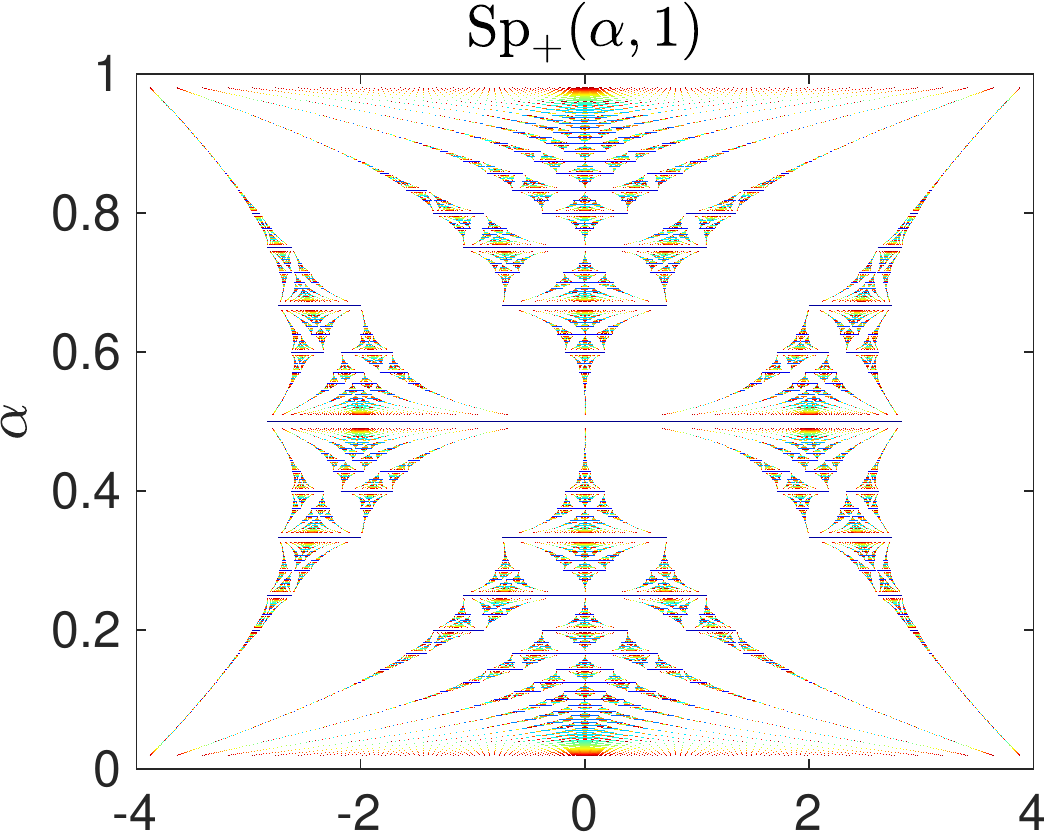}}
\hfill
\raisebox{-0.5\height}{\includegraphics[width=0.48\textwidth,trim={0mm 0mm 0mm 0mm},clip]{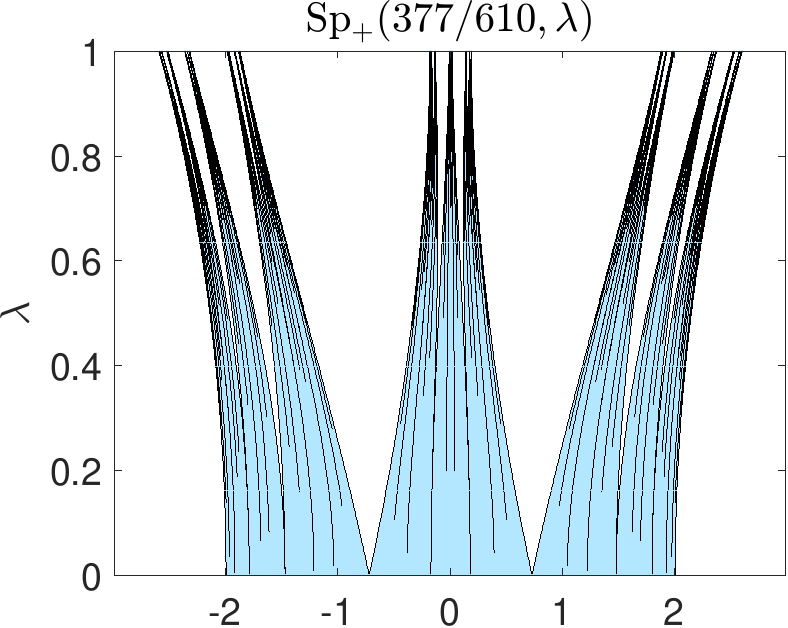}}\vspace{-2mm}
\caption{Left: A Hofstadter butterfly consisting of $\spec_{+}(\alpha,1)$ (union of spectra defined in \eqref{AM_union_spectrum}) as $\alpha$ varies. The colors correspond to the values of $q$ (shown up to $q=51$), where $\alpha=p/q$ with $p$ and $q$ coprime. Right: The set $\spec_{+}(377/610,\lambda)$ for various $\lambda$ and a rational approximation $377/610$ to the irrational number $(\sqrt{5}-1)/2$.}
\label{AMfig1}
\end{figure}

Despite such successes, most computations rely on finite truncations of the underlying operator, and there remains a lack of rigorous computational theory or convergence analysis to guide such delicate spectral calculations. This is particularly true in physical dimensions higher than one, where analytical tools are few and far between. Across the extensive applications literature, practitioners often depend on heuristics rather than guarantees of accuracy or convergence. For instance, \cref{fig:Pen_gaps} illustrates how standard truncation methods fail to capture the spectral gap distribution for Penrose-tile models of quasicrystals, the canonical two-dimensional quasicrystal system. Notably, the first provably convergent algorithm for computing spectra of general two-dimensional quasicrystals appeared only in 2019 \cite{colb1}. This leaves open fundamental questions about which spectral quantities can be computed reliably and how numerical methods can assist in proving theorems, either by providing compelling evidence or through direct computer-assisted proofs. To address these challenges, we develop a rigorous computational framework for quantifying spectral size.

Mathematically, the ``size'' of a set can be quantified in many ways---for example, by its Lebesgue measure (how much length or volume it covers), by its number of connected components or gaps, by its capacity (ability to hold electric charge), or by its fractal dimensions. A classic case is the middle-third Cantor set, which has zero Lebesgue measure but a positive fractal dimension, illustrating how different notions of size can reveal different aspects of a set. In this paper, we study the problem of computing various notions of the size of the spectrum of a bounded self-adjoint operator. Our results are not limited to aperiodic operators: the techniques extend to general operators with applications throughout the sciences, including Krylov methods \cite{nevanlinna2012convergence,nevanlinna1995hessenberg,nevanlinna1990linear,miekkala1996iterative,DTT98}, transport properties of dynamical systems \cite{han1994critical, barbaroux2001fractal, schulz1998anomalous, ketzmerick1992slow, ketzmerick1997determines}, multilayer materials (e.g., bilayer graphene) \cite{dean2013hofstadter,geim2013van,hunt2013massive,ponomarenko2013cloning}, strained materials \cite{naumis2017electronic,roman2014spectral}, and laser stability \cite{rivera2018fractal,new2001diffractive,berry2001fractal,berry2001theory,berry2004physics}. Although our focus is on spectral problems, the methods introduced here provide general tools for computing notions of size for any compact set satisfying the conditions we establish.

\begin{figure}[t]
\centering
\raisebox{-0.5\height}{\includegraphics[width=0.32\textwidth,trim={0mm 0mm 0mm 0mm},clip]{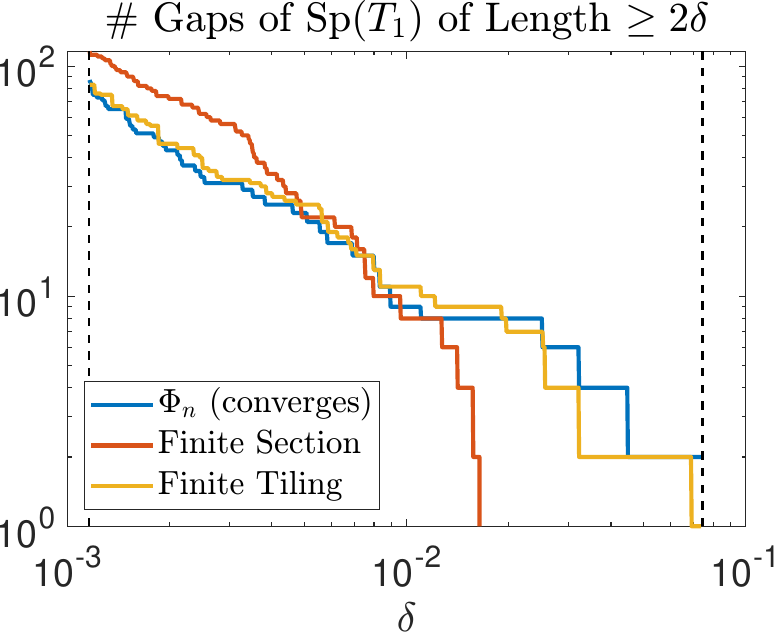}}
\hfill
\raisebox{-0.5\height}{\includegraphics[width=0.32\textwidth,trim={0mm 0mm 0mm 0mm},clip]{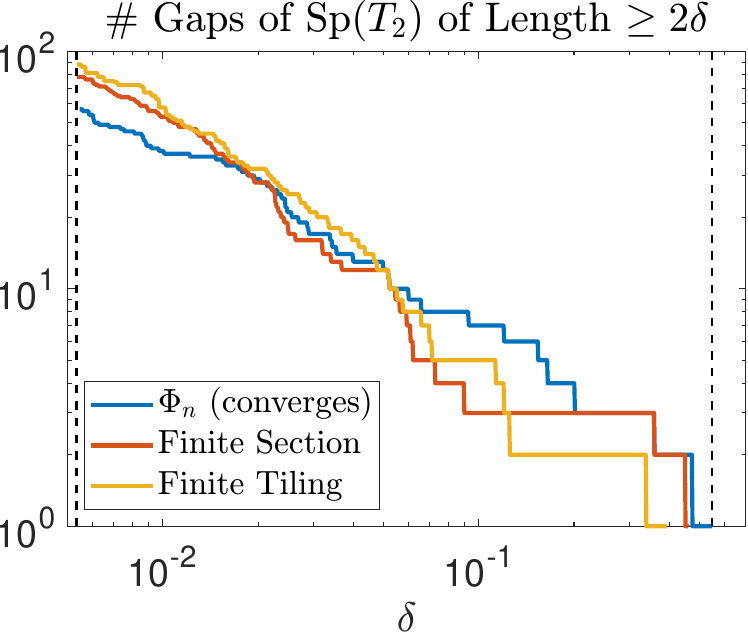}}
\raisebox{-0.5\height}{\includegraphics[width=0.32\textwidth,trim={0mm 0mm 0mm 0mm},clip]{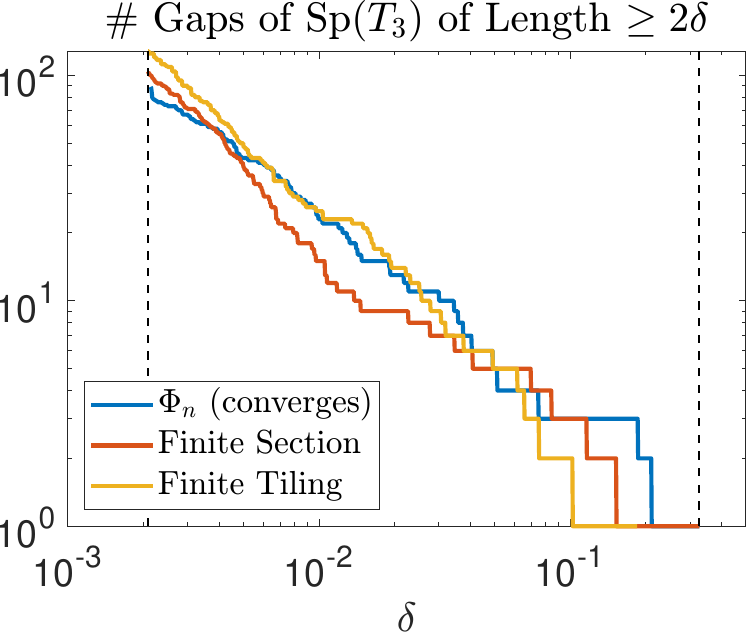}}
\caption{The number of gaps in the spectrum of a given size for the three Penrose tile models considered in \cref{sec:examples2}. The method we propose based on $\Phi_n$ converges, whereas the finite section (truncation of infinite matrix) and tiling (Laplacian of truncated tile) methods have distorted counts due to spectral pollution. The dashed lines show converged regions.}
\label{fig:Pen_gaps}
\end{figure}

Given a notion of size $\mathcal{Q}$ and a class $\Omega$ of bounded self-adjoint operators, we consider the following

\vspace{1mm}

\begin{adjustwidth}{+5mm}{+5mm}
\noindent{}\textbf{Open problem:} \textit{Are there algorithms that approximate $\mathcal{Q}(\mathrm{Sp}(A))$ for all $A\in\Omega$?
If so, what algorithms are ``optimal''? How do these algorithms exploit possible additional structure on $\Omega$?}
\end{adjustwidth}

\vspace{1mm}

\noindent{}However, to understand what can be computed and what cannot, we need a framework from computational theory. To resolve this problem, we use the Solvability Complexity Index (SCI) hierarchy \cite{ben2020can,Hansen_JAMS,colbrook2022foundations}. In simple terms, the SCI hierarchy classifies computational problems by how many limits or iterations an algorithm needs to converge to the answer.

The SCI hierarchy has recently been used to resolve the classical problem of computing spectra of general bounded operators on separable Hilbert spaces \cite{ben2020can,Hansen_JAMS}, which originated from work by Szeg{\H o} \cite{Szego} on finite section approximations and Schwinger \cite{Schwinger} on finite-dimensional approximations to quantum systems. Surprisingly, computing the spectrum often requires several successive limits, as exemplified by the algorithm $\Gamma_{n_3,n_2,n_1}$ from \cite{Hansen_JAMS}, which converges to the spectrum in the Hausdorff metric:
\begin{equation}
\label{SCI_mult_lim_example}
\lim_{n_3\rightarrow\infty}\lim_{n_2\rightarrow\infty}\lim_{n_1\rightarrow\infty}\Gamma_{n_3,n_2,n_1}(A)=\mathrm{Sp}(A)\quad\forall A\in\mathcal{B}(\ell^2(\mathbb{N})),
\end{equation}
where $\mathcal{B}(\ell^2(\mathbb{N}))$ denotes the class of bounded operators on $\ell^2(\mathbb{N})$. It is impossible to reduce the number of successive limits via any algorithm \cite{ben2020can}.
Traditional methods often rely on single-limit techniques (e.g., computing eigenvalues of increasingly large finite-dimensional matrices generated by reducing a mesh size, or increasing a truncation dimension). In contrast, the necessity of multiple limits reflects the intrinsic analytical complexity of infinite-dimensional spectral computations: it guides the development of nuanced algorithms that navigate the three-limit structure, and yields an infinite classification theory. For some subclasses of operators, one can design single-limit algorithms that converge to the spectrum with explicit error control \cite{colb1,colbrook2022foundations}. The SCI hierarchy thus delineates which computational problems are solvable, for which classes of operators, and with what types of algorithms, thus identifying the precise boundaries of computability and algorithmic optimality (i.e., the lowest SCI class necessary for the problem, also in certain cases the SCI hierarchy has connections with classical hierarchies \cite{colbrook2022computation}).

Classifying broader spectral problems (such as the above open problem) and developing a comprehensive library of optimal algorithms remain largely unexplored. Navigating this landscape requires a diverse set of methods to accommodate the wide range of structures and spectral behaviors exhibited by different classes of operators. Each structure presents distinct opportunities for tailored computational approaches. By integrating analytical results from spectral theory (e.g., \cite{avron1990measure,damanik2008fractal,Suto1987CMP}) into the SCI framework, we classify the complexity of computing spectral size and design new algorithms that determine these quantities rigorously, paving the way for their broad application. These tools also provide a foundation for formulating new mathematical conjectures (\cref{sec:conjectures}) and, in suitable settings, for establishing theorems via computer-assisted proofs. For instance, see~\cite{DEFM202XUEXM} for a catalogue of flat bands in certain two-dimensional quasicrystals. Our hope is that these techniques will not only advance spectral theory and mathematical physics, but also provide new tools for computational mathematics at large, potentially impacting areas like dynamical systems, optimization, and even the emerging interface of AI and mathematics.

\subsection{Contributions and roadmap}

In \cref{sec:math_prelims}, we provide the necessary mathematical preliminaries, before detailing two key scenarios for bounded self-adjoint operators that underpin our computational assumptions, and extend beyond the spectral applications upon which we focus.
\begin{itemize}[leftmargin=12mm]
	\item[\textbf{(S1)}:] In the first scenario, we assume that one can compute, for each $n$, a finite union of disjoint compact real intervals
\[
\Gamma_n^{\spec}(A)=\bigcup_{m=1}^{M_n(A)}[a_{m,n}(A),b_{m,n}(A)]
\]
such that $\spec(A)\subset \Gamma_n^{\spec}(A)$ and each interval intersects the spectrum. Crucially, we require these covers to come with explicit, computable error bounds 
\[
d_{\mathrm H}(\Gamma_n^{\spec}(A),\spec(A)) \le E_n^{\spec}(A)
\]
satisfying $E_n^{\spec}(A)\rightarrow 0$ as $n\rightarrow\infty$,
where $d_{\mathrm H}(\cdot,\cdot)$ denotes the Hausdorff distance (see the discussion surrounding \cref{gamma_cover1}).
Thus \textbf{(S1)} requires certified two-sided Hausdorff control of the spectrum by interval enclosures. Motivated by the classical construction of Cantor sets via nested covers, this requirement is demanding for general self-adjoint operators. However, it is satisfied by many important one-dimensional quasicrystal models, where such interval covers already play a central role in their analysis. We show that these dynamically generated covers are also highly effective for spectral computation. 
\item[\textbf{(S2)}:] In the second scenario, we assume that one can compute functions $\Phi_n(z,A)$ satisfying
\[
\mathrm{dist}(z,\spec(A)) \le \Phi_n(z,A),
\qquad
\lim_{n\to\infty}\Phi_n(z,A)=\mathrm{dist}(z,\spec(A)),
\]
with uniform convergence in $z$ on compact subsets of $\mathbb{R}$.\ \  Thus \textbf{(S2)} provides certified upper bounds on the distance-to-spectrum that converge to the exact distance, but without producing enclosing interval covers or explicit Hausdorff error control. This property holds for a wide class of operators, though achieving it typically requires more refined techniques than finite section methods, which can suffer from spectral pollution and thereby violate the required upper-bound property.
\end{itemize}

Typically, \textbf{(S1)} requires strong global assumptions about the operators, whereas \textbf{(S2)} relies only on local knowledge of the operators, e.g., in cases where infinite matrices represent them (see \cref{sec:bounded_disp}). This distinction is reflected in the SCI classifications of the spectral quantities we consider. The two scenarios \textbf{(S1)} and \textbf{(S2)} have applications beyond the computational problems addressed in this paper, and unlock the door to a menagerie of problems associated with spectra and, more generally, compact sets \cite{colbrook2020PhD}.

In \cref{sec:Imethods}, we develop algorithms under the \textbf{(S1)} framework. We begin by demonstrating that many properties of interest remain continuous under limits of \textbf{(S1)} covers. This fact seems to have gone largely unnoticed in the spectral theory community, and we exploit it to compute properties like the Lebesgue measure of spectra with error control. However, when dealing with fractal dimensions (which are not continuous under limits of \textbf{(S1)} covers), it is crucial to connect the resolution of spectral approximations to the diameters of mesh coverings. We show how this can be done for the box-counting and Hausdorff dimensions, leading to the first practical and rigorously convergent algorithms for these challenging quantities. The section concludes with a lower bound in the SCI hierarchy -- an \textit{impossibility theorem} -- using the class of limit-periodic Schr\"odinger operators to prove the sharpness (optimality) of our algorithms, thus marking the boundary of what can be achieved under these conditions. This proof is based on the explicit construction of an adversarial potential.

\Cref{sec:examples1} then showcases state-of-the-art computations for aperiodic Schr\"odinger operators on $\ell^2(\mathbb{Z})$. These results uncover previously unseen spectral properties, offering new insights into the behavior of these operators.

\begin{table}[t!]
\caption{Summary of the main results, with theorem numbers in brackets. The various classes are defined in \cref{subsec:SCI} (for the $\Pi$ and $\Sigma$ classes, the subscript denotes the number of limits and the final limit is monotonic, for the $\Delta$ classes, the subscript denotes one more than the number of limits). Each classification has a corresponding lower bound (e.g., \cref{sec:limit_per_lower}). In general, moving from \textbf{(S1)} to \textbf{(S2)} costs a limit. At the time of writing, \textbf{(S1)} applies mainly to one-dimensional models, whereas \textbf{(S2)} applies mainly to higher-dimensional models. $^*$The specific decision problem ``Is $\spec(A)$ an interval?'' gives an SCI one lower than the general case (i.e., $\Pi_1^A$ for \textbf{(S1)} and $\Pi_2^A$ for \textbf{(S2)}).} 
\label{SCI_tab}
\renewcommand{\arraystretch}{1}
\begin{center}
\begin{tabular}{|c|c|c|}
\hline
\textbf{Problem}&\textbf{Classification under (S1)}&\textbf{Classification under (S2)}\\
 \hline
 \hline
\# connected components & $\Sigma_1^A$ (\cref{thm:cor_assumI_lims}) & $\Sigma_2^A$ (\cref{cor_assumII_extra_lims})\\
Lebesgue measure & $\Pi_1^A$ (\cref{thm:cor_assumI_lims}) & $\Pi_2^A$ (\cref{cor_assumII_extra_lims})\\
Capacity & $\Pi_1^A$ (\cref{thm:cor_assumI_lims}) & $\Pi_2^A$ (\cref{cor_assumII_extra_lims})\\
Decision problems$^*$ & $\Pi_2^A$ (\cref{thm:cor_assumI_lims}) & $\Pi_3^A$ (\cref{cor_assumII_extra_lims})\\
Lower box-counting dim. & $\Sigma_2^A$ (\cref{thm:box_counting}) & $\Sigma_3^A$ (\cref{fractal_dim_final})\\
Upper box-counting dim. & $\Pi_2^A$ (\cref{thm:box_counting}) & $\Pi_2^A$ (\cref{fractal_dim_final})\\
Box-counting dimension & $\Delta_2^A$ (\cref{thm:box_counting}) & $\Pi_2^A$ (\cref{fractal_dim_final})\\
Hausdorff dimension & $\Sigma_2^A$ (\cref{thm_Haus}) & $\Sigma_3^A$ (\cref{fractal_dim_final})\\
\hline
\end{tabular}
\end{center}
\renewcommand{\arraystretch}{1.1}
\end{table}

In \cref{sec:IImethods}, we develop algorithms under the \textbf{(S2)} framework and introduce the \textit{Swiss cheese method}. Our approach computes spectral covers that meet the \textbf{(S1)} conditions but at the cost of an additional limiting process. These covers are constructed by removing open intervals from an initial interval, akin to the Swiss cheese sets used in approximation theory \cite{roth1938approximationseigenschaften}. The positions and diameters of these ``holes'' are dictated by the functions that satisfy \textbf{(S2)}. In essence, we approximate the spectrum by identifying points (or gaps) that do not belong to it. A major challenge arises in aperiodic systems of dimension higher than one, where techniques such as transfer matrices are absent. Our method allows us to extend the results from \cref{sec:Imethods}, albeit with an increase in the SCI classification by one level (i.e., adding one more limit). In \cref{sec:bounded_disp}, we demonstrate that many operator classes, including higher-dimensional quasicrystal models, satisfy the \textbf{(S2)} condition. This fact allows us to rigorously and practically compute their spectral properties. Surprisingly, to meet the requirements of \textbf{(S2)}, we rely on singular values rather than eigenvalues.

Finally, in \cref{sec:examples2}, we apply our algorithms to Penrose tile models of quasicrystals, demonstrating how standard truncation methods (e.g., finite section techniques) can fail by producing misleading results, while careful computations reveal new spectral properties. For example, \cref{fig:Pen_gaps} demonstrates how truncation methods typically fail to correctly compute the spectral gap distribution, whereas our new method achieves convergence.

In summary, we establish optimal, convergent algorithms for computing spectral size and classify the difficulty of these problems, as shown in \cref{SCI_tab}. These algorithms are also practical, enabling state-of-the-art numerical experiments on one- and two-dimensional quasicrystals. Their design clarifies the assumptions required for different models. We conclude the paper with further remarks and conjectures motivated by our computational examples.

\subsection{Connections with previous work}\label{sec:prev_work}

A motivation for this paper is the discovery by Chandler--Wilde, Chonchaiya, and Lindner \cite{chandler2024spectral} that banded operators on $\ell^2(\mathbb{Z})$ admit certain spectral covers under suitable conditions, leading to algorithms for computing spectra and rigorous spectral exclusions. This result provides a concrete example of \textbf{(S1)}. The authors of \cite{chandler2024spectral} assume access to the collection of all finite patches of a given size for the operator. Their results can be viewed as an extension of Gershgorin's theorem \cite{gershgorin1931uber} to a family of inclusion sets for the spectra of bi-infinite tridiagonal matrices. In \cref{sec:examples1}, we explore different techniques to produce covers for one-dimensional aperiodic operators. Our approximations are typically built around periodic approximations of the operators. We also refer the reader to the work of Ben-Artzi, Marletta, and R\"osler on computing scattering resonances via spectral covers \cite{Jonathan_res,benartzi2020computing}.

An exciting problem arises from our work: constructing \textbf{(S1)} covers for higher-dimensional quasicrystal models. Recently, Hege, Moscolari and Teufel \cite{hege2026computing} have shown that for discrete short-range operators with finite local complexity, the spectrum of normal operators admits two-sided error control (i.e., $\Delta_1^A$-computability, defined in Section~\ref{sec:math_prelims}) provided one is given local patch information. Their results therefore establish a rigorous pathway to \textbf{(S1)} in this setting, and this program has been carried out explicitly for the two-dimensional Ammann--Beenker tiling via a cut-and-project construction~\cite{PhysRevB.106.155140}. In view of this work, the remaining challenge is not the abstract existence of such covers, but their explicit construction and effective implementation for concrete two-dimensional quasicrystal Hamiltonians of physical interest. In particular, developing computationally sharp and practically implementable \textbf{(S1)} covers for models such as the Penrose tiling—analogous to what has been achieved for Ammann--Beenker—would move these classes from \textbf{(S2)} to \textbf{(S1)} within our framework.

When the finite section method is successful for computing spectra, it typically provides no error control or verification. Examples of such work include studies by B{\"o}ttcher, Brunner, Iserles \& N{\o}rsett \cite{Arieh2}; B{\"o}ttcher, Grudsky \& Iserles \cite{bottcher_grudsky_iserles_2011}; Marletta \cite{Marletta_pollution}; and Marletta \& Scheichl \cite{marletta2012eigenvalues}. These latter works also discuss when the finite section method fails. When applied to the more complicated spectral features studied in this paper, the finite section method rarely converges due to issues such as spectral pollution. See also the work of B{\"o}ttcher~\cite{Albrecht_Fields,Bottcher_pseu}, B{\"o}ttcher \& Silbermann~\cite{Bottcher_book,bottcher2006analysis}, Laptev \& Safarov~\cite{Laptev}, and Brown~\cite{brown2007quasi,Brown_2006,Brown_Memoars}. B{\"o}ttcher \& Silbermann~\cite{Albrecht1983} pioneered the combination of spectral computation and $C^*$-algebras.

Olver, Townsend, and Webb have established a critical framework for infinite-dimensional numerical linear algebra and computational methods for infinite data structures, contributing both theoretical insights and practical algorithms \cite{Olver_Townsend_Proceedings,Olver_SIAM_Rev,webb_thesis,webb2021spectra}. Recent work has also begun to explore the locality and computability of spectral or topological invariants in non-periodic quantum systems \cite{lu2022existence,loring2024locality,lu2024algebraic}. The paper \cite{colbrook2022computation} addressed similar questions for different classes of operators, most notably diagonal operators, using different techniques. By linking \textbf{(S1)} and \textbf{(S2)} through a covering lemma (\cref{lem:cover1}), our proofs provide deeper insights into the underlying structures and introduce a systematic approach for designing algorithms. Notably, the techniques developed in this paper are distinct: the methods in \cite{colbrook2022computation} do not yield optimal or practical algorithms for quasicrystal models.

The multiple limit structure in \cref{SCI_mult_lim_example} appears in other areas of computational mathematics. An early instance is Smale's polynomial root-finding problem with rational maps \cite{smale_question}, which involves several successive limits as shown by McMullen \cite{McMullen1,mcmullen1988braiding} and Doyle \& McMullen~\cite{Doyle_McMullen}. These results can be expressed in terms of the SCI hierarchy \cite{ben2020can}, which generalizes Smale's seminal work on the foundations of scientific computing and the existence of algorithms \cite{smale1981fundamental,Smale_Acta_Numerica,BCSS}. The SCI is now being used to explore the foundations of computation in diverse areas, including resonance \cite{Jonathan_res,benartzi2020computing}, inverse \cite{AdcockHansenBook}, and optimization \cite{SCI_optimization} problems, the foundations of AI \cite{colbrook2022difficulty}, and data-driven dynamics \cite{colbrook2024limits}.

The SCI hierarchy is further motivated by computer-assisted proofs, which have become an essential tool in modern mathematics \cite{AIM}. Perhaps unexpectedly, non-computable problems, such as those arising in the Dirac--Schwinger conjecture on the asymptotic behaviour of ground states of certain Schrödinger operators \cite{fefferman1990,fefferman1992,fefferman1996interval} or in the proof of Kepler's conjecture (Hilbert’s 18th problem) \cite{hales_Pi,Hales_Annals}, play a central role in these developments. The SCI hierarchy helps explain this apparent paradox, particularly through the classes $\Sigma^A_1$ and $\Pi^A_1$ (i.e., verifiable classes) described below. Many of the problems we study in this paper also fall within $\Sigma^A_1 \cup \Pi^A_1$, indicating that they are suitable candidates for computer-assisted proofs.

\subsection*{Acknowledgements}
We thank the Cecil King Foundation and
the London Mathematical Society for funding a trip of Colbrook to Virginia Tech, which initiated his project. Fillman was supported in part by National Science Foundation grants DMS-2213196 and DMS-2513006 and by Simons Foundation Grant MPS-TSM 00013720.
Embree and Fillman thank the American Institute of Mathematics for hospitality during a recent SQuaRE program, which facilitated related work on quasiperiodic models. We are grateful to the referees for their many helpful suggestions.

\section{Mathematical Preliminaries}
\label{sec:math_prelims}

\subsection{A primer on the solvability complexity index} \label{subsec:SCI}

What does it mean to compute a quantity in mathematics? Computation is often interpreted as providing an expression in closed form, perhaps involving some commonly accepted ``elementary'' functions or operations. However, this interpretation is overly restrictive. In many instances, the class of solutions of interest cannot be expressed in such a manner. For example, a general quintic equation cannot be solved using only roots and arithmetic operations, and many analytic functions lack ``elementary'' antiderivatives. Despite these limitations, these quantities can be calculated to any desired precision almost instantaneously.
A more robust and useful notion of computation should focus on \emph{algorithms}.

The Solvability Complexity Index (SCI) hierarchy~\cite{colbrook2020PhD,colbrook2022foundations,Hansen_JAMS}~\cite[Chapter 2]{colb_book} provides a framework for classifying the difficulty of computing various quantities and proving the optimality of algorithms. The SCI sets expectations for the relative difficulty of such problems (e.g.,  comparing the computation of the measure of the spectrum to its fractal dimension) and highlights opportunities for computations to be expedited by incorporating the structure associated with a given family of operators. We will see an example in this paper: for discrete Schr\"odinger operators with certain aperiodic potentials, one can exploit the operator's structure to obtain a convergent upper bound on the spectrum. The availability of such an upper bound elevates derived spectral questions to a more favorable SCI, making computationally tractable some problems that are generally much more difficult to tackle.

Throughout this paper, we use the notation of the SCI.\ \ 
We briefly introduce the SCI hierarchy to establish this notation and provide context for our results. We begin with a formal definition of a computational problem, discuss the structure of algorithms we can use to approximate a solution, and then explain how to classify the problem's difficulty.\vspace{2mm}

\begin{definition}[Computational problem]
\label{CHAP2_def:comp_prob}
A computational problem is a collection $\{\Xi,\Omega,\mathcal{M},\Lambda\}$ consisting of:
\begin{itemize}[leftmargin=0.7cm]
	\item An \textit{input class} (set of input instances), $\Omega$;
	\item A \textit{metric space} $(\mathcal{M},d)$;
	\item A \textit{problem function} $\Xi:\Omega\rightarrow\mathcal{M}$;
	\item An \textit{evaluation set}, $\Lambda$, of complex-valued functions on $\Omega$.
\end{itemize}
The evaluation set $\Lambda$ must separate elements of $\Omega$, to the degree of separation achieved by $\Xi$:
\begin{equation}
\label{CHAP2_eq:Lambda_determines_it}
\text{if }A,B\in\Omega\text{ with }f(A)= f(B)\,\,\forall f\in\Lambda, \text{ then }\Xi(A)= \Xi(B).
\end{equation}
In other words, any two inputs that cannot be distinguished by evaluation functions in $\Lambda$ have the same output under $\Xi$. (Otherwise, it is impossible to recover $\Xi$ from $\Lambda$.)
\end{definition}

The problem function $\Xi$ specifies the quantity to be computed, and the metric $d$ specifies what it means to approximate $\Xi(A)$. The evaluation set $\Lambda$ encodes the information about the input $A\in\Omega$ that an algorithm may query. That is, given $A\in\Omega$, we seek to compute an approximation of $\Xi(A)\in\mathcal{M}$ in the sense of the distance $d$, using an algorithm that can only access information in $\Lambda$.

\begin{example}[Lebesgue measure of the spectrum]
\label{CHAP2_ex:measure_problem}
Let $\Omega$ be a class consisting of bounded self-adjoint operators on $\ell^2(\mathbb{Z})$. Consider the computational problem
\[
\Xi(A)=|\spec(A)|,
\qquad 
\mathcal{M}=\mathbb{R}_{\ge 0},
\qquad 
d(x,y)=|x-y|,
\]
where $|\spec(A)|$ denotes the Lebesgue measure of the compact set $\spec(A)\subset\mathbb{R}$. A canonical evaluation set is given by the matrix-element functionals
\[
\Lambda=\{f_{j,k}:\Omega\to\mathbb{C}\}_{j,k\in\mathbb{Z}},
\qquad 
f_{j,k}(A)=\langle Ae_k,e_j\rangle,
\]
where $\{e_j\}_{j\in\mathbb{Z}}$ is the standard basis of $\ell^2(\mathbb{Z})$.
If instead $\Omega$ is restricted to discrete Schr\"odinger operators
$
(Au)_j = u_{j-1}+u_{j+1}+V_j u_j,
$
then $A$ is determined by the potential sequence $\{V_j\}_{j\in\mathbb{Z}}$, and it suffices to take
\[
\Lambda=\{f_j:\Omega\to\mathbb{C}\}_{j\in\mathbb{Z}},
\qquad 
f_j(A)=\langle Ae_j,e_j\rangle = V_j.
\]
In both cases, the separation condition \eqref{CHAP2_eq:Lambda_determines_it} is satisfied: if two operators cannot be distinguished by the evaluation set $\Lambda$, then they coincide within the prescribed input class $\Omega$, and hence have the same spectrum and the same Lebesgue measure of the spectrum.\hfill$\blacksquare$
\end{example}

An \emph{algorithm} for the problem $\{\Xi,\Omega,\mathcal{M},\Lambda\}$ is then a procedure that, given oracle access to the values $\{f(A):f\in\Lambda\}$ for $A\in\Omega$, outputs an element of $\mathcal{M}$ intended to approximate $\Xi(A)$ (with accuracy measured in the metric $d$).

\begin{definition}[General algorithm]
\label{CHAP2_def:Gen_alg_NEW}
A general algorithm $\Gamma:\Omega\rightarrow \mathcal{M}$ for solving the  problem $\{\Xi,\Omega,\mathcal{M},\Lambda\}$ is a map with the following property. For every $A\in\Omega$, there exists a non-empty finite subset of evaluations $\Lambda_\Gamma(A) \subset\Lambda$ such that if $B\in\Omega$ satisfies $f(A)=f(B)$ for every $f\in\Lambda_\Gamma(A)$, then $\Lambda_\Gamma(A)=\Lambda_\Gamma(B)$ and $\Gamma(A)=\Gamma(B)$.
\end{definition}

An algorithm can only access any $A \in \Omega$ through a \emph{finite number} of evaluation functions $f \in \Lambda_\Gamma(A) \subset \Lambda$. The set $\Lambda_\Gamma(A)$ could be adaptively chosen on the fly by the algorithm and its size could vary significantly with the input $A$. However, it must be the case that if $f(A)=f(B)$ for all $f\in\Lambda_\Gamma$, then $\Gamma(A)=\Gamma(B)$. We emphasize that the set $\Lambda$ plays an important role, defining realistic ways in which the information in each $A\in\Omega$ can be accessed.  In particular, both $\Lambda$ and $\Omega$ can incorporate special structure.  For example, if $\Omega$ is the set of banded Laurent operators on $\ell^2(\mathbb{Z})$, then $\Lambda$ would naturally give access to the symbol function for the operator~\cite{Bottcher_book}.  For more general bounded linear operators on $\ell^2(\mathbb{Z})$, $\Lambda$ could give access to samples $f_{j,k}(A) = \langle A e_k, e_j \rangle$ only for $-n \leq j,k \leq n$, known as the \emph{finite section method}.  In this case no single algorithm (e.g., a finite section for one value of $n$) will enable the computation of the spectrum; one or more \emph{limits} of algorithms are necessary.  The extent of such limits is described by \emph{towers of algorithms}.

\begin{definition}[Tower of algorithms]
A tower of algorithms of height $k$ for a computational problem $\{\Xi,\Omega,\mathcal{M},\Lambda\}$ is a collection of functions 
$$
\Gamma_{n_k, \ldots, n_1},\,
\Gamma_{n_k, \ldots, n_2},\, \ldots,\,\Gamma_{n_k}:\Omega \rightarrow \mathcal{M}, \quad n_k,\ldots,n_1 \in \mathbb{N},
$$
where the lowest level $\{\Gamma_{n_k, \ldots, n_1}\}$ comprises general algorithms (\cref{CHAP2_def:Gen_alg_NEW}) and for every $A \in \Omega$, the following convergence holds in $(\mathcal{M},d)$:
\begin{align*}
\lim_{n_1 \rightarrow \infty} \Gamma_{n_k, \ldots, n_1}(A)&=\Gamma_{n_k, \ldots, n_2}(A),\quad
&&\lim_{n_2 \rightarrow \infty} \Gamma_{n_k, \ldots, n_2}(A)=\Gamma_{n_k, \ldots, n_3}(A),\\
&\ldots,\quad
&&\lim_{n_k \rightarrow \infty} \Gamma_{n_k}(A)=\Xi(A).
\end{align*}
\end{definition}

When building such algorithms, one typically expects each of the index variables $n_1,\ldots,n_k$ to reflect a distinct limiting procedure, such as the size of a finite-dimensional truncation, or a grid width used for approximating the box-counting dimension (see Section~\ref{ssec:S1:bcd}). However, for our impossibility results, these indices can represent anything.  Each higher level in the tower consists of algorithms that result from taking the limit to infinity of each one of these indices, and thus, in principle, each of these algorithms can access infinite data.

We discuss two different approaches, distinguished in the SCI categories by a superscript $\alpha$ (e.g., $\Delta_0^\alpha$ below):  when $\alpha=A$, the algorithms in question use a practical model of real arithmetic and comparison operations; when $\alpha=G$, the algorithms may use more powerful, general operations.
To obtain the strongest theorems possible, we use the realistic $\alpha=A$ model to construct practical algorithms that give upper bounds on complexity.  (Specifically, we use the model of Smale et al.~\cite{BCSS}, though other models of arithmetic (e.g.,  \cite{tucker2011validated, turing1937computable}) are also possible; see~\cite[Section~2.3]{colbrook2020PhD} for details.)
In contrast, we use the more powerful $\alpha=G$ model to give rigorous lower bounds.
The proofs of impossibility results involve analysis of generic operators in the class $\Omega$, attributing the non-computability to fundamental mathematical properties at the interface of the problem and the operator class, rather than the limitations of any given model of computation for the operations themselves.

\vspace{2mm}

\noindent\textbf{SCI hierarchy:} The SCI for a problem $\{\Xi,\Omega,\mathcal{M},\Lambda\}$ is determined in terms of the tower of algorithms required to solve it.
\begin{itemize}[leftmargin=4mm]
\item If the problem can be solved exactly in finitely many operations, then its SCI is zero; we write $\{\Xi,\Omega,\mathcal{M},\Lambda\} \in \Delta_0^\alpha$.
\item If the problem can be solved with a set of algorithms
$\{\Gamma_n\}$ such that $d(\Gamma_n(A),\Xi(A)) \le 2^{-n}$ for all $A\in\Omega$
(i.e., we can compute $\Xi(A)$ in one limit with full control of the error),
we write $\{\Xi,\Omega,\mathcal{M},\Lambda\} \in \Delta_1^\alpha$. (Naturally, by taking a subsequence of $\{\Gamma_n\}$ we can replace $\{2^{-n}\}$ with any convergent sequence while maintaining the same $\Delta_1^\alpha$ classification.)
\item If the problem can be solved for all $A\in\Omega$
with a tower of algorithms of height $k \geq 1$ or less, we write 
$\{\Xi,\Omega,\mathcal{M},\Lambda\} \in \Delta_{k+1}^\alpha$.
\end{itemize}
Note that if a problem lies in $\Delta_2^\alpha$, then it can still be computed via a one-limit procedure, but not necessarily with error control in the sense of $\Delta_1^\alpha$.

When $(\mathcal{M},d)$ is totally ordered, we shall also find it useful to consider problems that admit towers of algorithms that converge to $\Xi(A)$ from below ($\Sigma_k^\alpha$) and above ($\Pi_k^\alpha$).\footnote{\small One can also define these notions for convergence in the Hausdorff metric, for example, when computing the spectrum \cite[Section 2.2]{colbrook2020PhD}. However, these are not needed in this paper.}
\begin{itemize}[leftmargin=4mm]
\item We define $\Sigma_0^\alpha = \Pi_0^\alpha = \Delta_0^\alpha$.
\item For $k\ge 1$, a problem $\{\Xi,\Omega,\mathcal{M},\Lambda\}$ is said to be in 
$\Sigma_k^\alpha$ if it is in $\Delta_{k+1}^\alpha$ and there exists a tower of
algorithms $\{\Gamma_{n_k,\ldots,n_1}\}$ such that 
$\Gamma_{n_k}(A) \uparrow \Xi(A)$ for all $A \in \Omega$.
\item For $k\ge 1$, a problem $\{\Xi,\Omega,\mathcal{M},\Lambda\}$ is said to be in 
$\Pi_k^\alpha$ if it is in $\Delta_{k+1}^\alpha$ and there exists a tower of
algorithms $\{\Gamma_{n_k,\ldots,n_1}\}$ such that 
$\Gamma_{n_k}(A) \downarrow \Xi(A)$ for all $A \in \Omega$.
\end{itemize}
\noindent{}These classes capture notions of verification. Even though problems in $(\Sigma_1^\alpha \cup \Pi_1^\alpha) \backslash \Delta_1^\alpha$ are considered non-computable, they can still be used in computer-assisted proofs. For example, consider a conjecture stating that $|\spec(A)| < 1$ for an operator $A$.\ \ If we use a $\Pi_1$-tower $\{\Gamma_n\}$ and the theorem is true, then $\Gamma_n(A) < 1$ for sufficiently large $n$. As soon as we observe $\Gamma_n(A) < 1$, we can conclude that the theorem must be true.

The SCI of a given problem measures its complexity: it helps us gauge the relative difficulty of problems that might otherwise appear similar and understand how some extra structure might accelerate the solution of a problem by lowering its SCI.\ \ For instance, one of the beautiful aspects of $\Pi_1^A$ algorithms for the spectrum is that they lower the SCI for some of the problems we discuss in this paper. At the start of the collaboration between the authors, the calculus of the SCI (in this case, bringing together $\Sigma_1$ and $\Pi_1$ algorithms for the same class of problems, and noting $\Sigma_1\cap\Pi_1=\Delta_1$ \cite[Proposition 2.2.8]{colbrook2020PhD}) made spotting this opportunity to lower the SCI to $\Delta_1$ immediate.

\subsection{Assumptions on computing spectra}
\label{sec:assumptions_on_spec}

The kinds of algorithms we discuss are motivated by the following fundamental questions about the \emph{size} of compact sets $S\subset \mathbb{R}$.
\begin{itemize}[leftmargin=0.7cm]
\item What is the Lebesgue measure of $S$? Is it zero?
\item What are the fractal dimensions of $S$? Are they zero? Are they one?
\item What is the logarithmic capacity of $S$? Is it zero?
\item Does $S$ possess gaps (bounded connected components of $\mathbb{R} \setminus S$) or is it an interval?
\item Are there infinitely many gaps? Are they dense? Is $S$ a \emph{Cantor set} (a perfect, nowhere dense set)?
\end{itemize}
The computational assumptions \textbf{(S1)} and \textbf{(S2)} we make below can be applied to arbitrary compact $S\subset\mathbb{R}$. Due to its richness and variety, we illustrate these problems on spectral theory, where
\begin{equation}
S=\spec(A) = \{ \lambda \in \mathbb{C} : A - \lambda I \text{ is not boundedly invertible} \},
\end{equation}
for a bounded self-adjoint operator $A$.\ \ Problems related to the spectrum's Cantor nature, Lebesgue measure, and fractal dimensions are extremely well-studied, particularly for aperiodic operators. For example, Kac's \textit{Ten Martini Problem}, that the spectrum of the almost Mathieu operator is a Cantor set for all irrational frequencies, was conjectured by Azbel \cite{azbel1964energy}. It attracted a host of numerical and analytical work before being proven \cite{avila2009ten}. Another well-studied operator is the Fibonacci Hamiltonian \cite{sutHo1989singular}, which has a Cantor spectrum of zero Lebesgue measure. For further examples of operators and numerical approximations of the Lebesgue measure, see the references in \cite{avila2017spectral,benza1991band,sire1989electronic}. Similarly, problems related to the existence and finiteness of the number of spectral gaps have been examined extensively, with recent work emphasizing the use of topological obstructions to spectral gap formation, e.g., \cite{ADG2023GAFA, DamFil2023CMP}. In many examples of interest in mathematical physics, $A$ is a \emph{Schr\"odinger operator}. The notions of size $\mathcal{Q}(\mathrm{Sp}(A))$ have been studied extensively over the years for Schr\"odinger operators \cite{DamanikFillman2022ESO}. Many of these examples have underlying Markov models or so-called dynamically defined potentials, where $V(n)=f(T^n\omega)$ for an invertible transformation $T$ on some space $\mathcal{X}$ and $f:\mathcal{X}\rightarrow\mathbb{R}$. This connection with dynamical systems is extremely fruitful, allowing the unified study of diverse systems such as random operators, almost periodic operators, periodic operators, limit-periodic operators, and quasiperiodic operators, each corresponding to a suitable choice of underlying ergodic dynamics.

We consider classes, $\Omega$, of bounded self-adjoint operators and an appropriate set of evaluation functions $\Lambda$. All the algorithms we build are local and can be extended to general self-adjoint operators, where the Hausdorff metric space $(\mathcal{M}_{\mathrm{H}},d_{\mathrm{H}})$ of non-empty compact subsets of $\mathbb{C}$ is replaced by the Attouch--Wets metric space of non-empty closed subsets of $\mathbb{C}$. One can then compute the quantities locally over the spectrum. We focus on two scenarios of interest that correspond to different SCI classifications of computing the spectrum.
\begin{itemize}
	\item[\textbf{(S1)}] \textbf{\emph{Spectral Covers.}} 
We assume that, using $\Lambda$, we can compute $\{\Gamma_n^\spec\}$, which is a $\Delta_1^A$-tower for $\{\spec,\Omega,\MH,\Lambda\}$, along with positive numbers $\{E_n^{\spec}\}$ such that, for all $A\in\Omega$:
\begin{itemize}[leftmargin=0.7cm]
	\item \textit{Unions of intervals:} Each output $\Gamma_n^\spec(A)$ is given as a finite disjoint union
	\begin{equation}
\label{gamma_cover1}
\Gamma_n^\spec(A)=\bigcup_{m=1}^{M_n(A)}[a_{m,n}(A),b_{m,n}(A)]\subset\mathbb{R},
\end{equation}
with $[a_{m,n}(A),b_{m,n}(A)]\cap\spec(A)\neq\emptyset$ for $m=1,\ldots, M_n(A)$;
\item \textit{Spectral cover:} $\spec(A)\subset\Gamma_n^\spec(A)$;
\item \textit{Error control:} $\lim\limits_{n\rightarrow\infty}E_n^{\spec}(A)= 0$ and $\max\limits_{z\,\in\,\Gamma_n^{\spec}(A)}\mathrm{dist}(z,\spec(A))\leq E_n^{\spec}(A)$.
\end{itemize}

\item[\textbf{(S2)}]  \textbf{\emph{Spectral Distance.}} 
Using $\Lambda$, we assume that $\Phi_n:\mathbb{R}\times \Omega\rightarrow\mathbb{R}$ ($n\in\mathbb{N}$) can be computed in finitely many arithmetic operations and comparisons such that, with uniform convergence on compact subsets of $\mathbb{R}$,
$$
\mathrm{dist}(z,\spec(A))\leq \Phi_n(z,A),\quad \lim_{n\rightarrow \infty}\Phi_n(z,A)=\mathrm{dist}(z,\spec(A))\quad\forall (z,A)\in\mathbb{R}\times\Omega.
$$
By taking successive minima of the $\Phi_n$, we may assume without loss of generality that $\Phi_{n+1}(z,A)\leq \Phi_n(z,A)$ for all $n\in\mathbb{N}$.
\end{itemize}

It is worth discussing how these assumptions interact with the difficulty of computing spectra. If \textbf{(S1)} holds, then
$
d_{\mathrm{H}}(\Gamma_n^\spec(A),\spec(A))\leq E_n^{\spec}(A).
$
Hence, $\{\spec,\Omega,\MH,\Lambda\}\in\Delta_1^A$. For the reverse implication, the following example shows that, given a $\Delta_1^A$-tower for $\{\spec,\Omega,\MH,\Lambda\}$, we can typically compute \textbf{(S1)} covers.

\begin{example}[Almost equivalence of \textbf{(S1)} and $\{\spec,\Omega,\MH,\Lambda\}\in\Delta_1^A$]
\label{CHAP8_example:delta_to_pi}
Given a class $\Omega$ of bounded self-adjoint operators and an evaluation set $\Lambda$, suppose there exists a $\Delta_1^A$-tower $\{\widetilde{\Gamma}_n^\spec\}$ with $d_{\mathrm{H}}(\widetilde{\Gamma}_n^\spec(A),\spec(A))\leq 2^{-n}$ for all $A\in\Omega$.\ \  Additionally, assume that the outputs of this tower are finite point sets,
$$
\widetilde{\Gamma}_n^\spec(A)=\{c_{m,n}\}_{m=1}^{N(n)}\subset\mathbb{R},\quad N(n)<\infty.
$$
We can use $\widetilde{\Gamma}_n^\spec(A)$ to form \textbf{(S1)} covers by taking the set $\cup_{m=1}^{N(n)}[c_{m,n}-2^{-n},c_{m,n}+2^{-n}]$ and expressing it as a union of disjoint intervals. A similar process applies if $\widetilde{\Gamma}_n^\spec(A)$ consists of a finite collection of compact intervals.\hfill$\blacksquare$
\end{example}

The examples of this paper correspond to covers that arise from a dynamical formalism, such as the trace map for the Fibonacci Hamiltonian in \cref{sec:Fibonacci_numerics}. (For practical problems these covers will be obtained via Floquet--Bloch theory, which requires the solution of finite-dimensional self-adjoint eigenvalue problems. Such calculations are $\Delta_1^A$ problems~\cite[Corollary 6.9]{colbrook2022foundations}.) Since computing the spectrum of a general discrete Schr\"odinger operator on $\ell^2(\mathbb{Z})$ does not fall in $\Delta_1^G$, the sets $\Omega$ that we consider under \textbf{(S1)} necessarily consist of special subclasses of these operators.

If \textbf{(S2)} holds, then $\{\spec,\Omega,\mathcal{M}_{\mathrm{H}},\Lambda\}\in \Sigma_1^A$ \cite{colbrook2022foundations}. Many classes of operators satisfy this assumption \cite{colb1}, including those that we study in \cref{sec:examples2}. The following example shows that the converse typically holds.

\begin{example}[Almost equivalence of \textbf{(S2)} and $\{\spec,\Omega,\MH,\Lambda\}\in\Sigma_1^A\}$]
\label{CHAP8_example:sigma_to_pi}
Suppose that $\{\widetilde{\Gamma}_n^\spec\}$ is a $\Sigma_1^A$-tower for $\{\spec,\Omega,\MH,\Lambda\}$ so that $\dist(z,\spec(A))\leq 2^{-n}$ for all $z\in\widetilde{\Gamma}_n^\spec(A)$. The functions
$$
\Phi_n(z,A)=2^{-n}+\dist(z,\widetilde{\Gamma}_n^\spec(A))
$$
satisfy the conditions laid out in \textbf{(S2)}.
If $\widetilde{\Gamma}_n^\spec(A)$ is a finite collection of points or compact intervals, $\Phi_n(z,A)$ can be computed using finitely many arithmetic operations and comparisons.\hfill$\blacksquare$
\end{example}

It is straightforward to show that \textbf{(S1)} is a stronger assumption than \textbf{(S2)}. That is, if \textbf{(S1)} holds, then \textbf{(S2)} holds. Moreover, if $\{\spec,\Omega,\mathcal{M}_{\mathrm{H}},\Lambda\}\in\Pi_1^A$ and \textbf{(S2)} holds (so that $\{\spec,\Omega,\mathcal{M}_{\mathrm{H}},\Lambda\}\in\Sigma_1^A$), then $\{\spec,\Omega,\mathcal{M}_{\mathrm{H}},\Lambda\}\in\Delta_1^A$ \cite[Proposition 2.2.8]{colbrook2020PhD}.

\section{Methods Under (S1)}
\label{sec:Imethods}

We now demonstrate how \textbf{(S1)} leads to methods that compute various spectral properties of operators in the class $\Omega$. The classifications provided in this section are lower in the SCI hierarchy than in \cref{sec:IImethods}, as \textbf{(S1)} represents a stronger assumption than \textbf{(S2)}. Consequently, this allows for more extensive computational capabilities. We prove lower bounds for discrete Schr\"odinger operators using limit-periodic operators in \cref{sec:limit_per_lower}.

\subsection{Problem functions semicontinuous in the limit under \textbf{(S1)}}

We first consider problem functions that behave continuously under assumption \textbf{(S1)}. This case already includes interesting examples such as the Lebesgue measure of the spectrum, the number of connected components in the spectrum, the logarithmic capacity of the spectrum, and so forth. Suppose, then, that \textbf{(S1)} holds and that the problem function $\Xi:\Omega\rightarrow\mathbb{R}$ is of the form
\begin{equation}
\label{limit_well-behaved}
\begin{split}
\Xi(A)=f(\spec(A))\,\, \forall A\in\Omega \quad\text{where}\quad f:\mathcal{M}_{\mathrm{H}}\rightarrow\mathbb{R}\cup\{\pm\infty\}\\
\text{and} \,\,\lim_{n\rightarrow\infty}f(\Gamma_n^{\spec}(A))=f(\spec(A))\,\,\forall A\in\Omega.
\end{split}
\end{equation}
If we can compute each $f(\Gamma_n^{\spec}(A))$ to any given accuracy in finitely many arithmetic operations and comparisons, then we immediately obtain a $\Delta_2^A$ classification for $\Xi$. Moreover, if the limit in \eqref{limit_well-behaved} is monotonic, we obtain a $\Sigma_1^A$ or $\Pi_1^A$ classification. The following lemma makes this precise.

\begin{lemma}
\label{thm_warm_up}
Suppose that \textbf{\rm\textbf{(S1)}} holds and let $\Xi$ be a problem function such that \eqref{limit_well-behaved} holds. Suppose that we can compute each $f(\Gamma_n^{\spec}(A))$ to any given accuracy using $\Lambda$ and finitely many arithmetic operations and comparisons. Then $\{\Xi,\Omega,\mathbb{R}\cup\{\pm\infty\},\Lambda\}\in\Delta_2^A.$ Moreover, if the limit in \eqref{limit_well-behaved} is from above or below, then $\{\Xi,\Omega,\mathbb{R}\cup\{\pm\infty\},\Lambda\}\in\Pi_1^A$ or $\{\Xi,\Omega,\mathbb{R}\cup\{\pm\infty\},\Lambda\}\in\Sigma_1^A$, respectively. Finally, if the convergence in \eqref{limit_well-behaved} is effective, i.e., there exists an algorithm (using $\Lambda$) that, given $\varepsilon>0$ and $A\in\Omega$, returns $N=N(A,\varepsilon)\in\mathbb{N}$ such that
\[
n\ge N \quad\Rightarrow\quad \bigl|f(\Gamma_n^{\spec}(A)) - f(\spec(A))\bigr|<\varepsilon,
\]
then $\{\Xi,\Omega,\mathbb{R}\cup\{\pm\infty\},\Lambda\}\in\Delta_1^A$.
\end{lemma}

\begin{proof}
We prove the $\Sigma_1^A$ classification result; the others are similar.
Let $\widetilde{\Gamma}_n(A)$ be an approximation of $f(\Gamma_n^{\spec}(A))$, computed to accuracy $1/n$ using $\Lambda$.\ \  We then set $\Gamma_n(A)=\widetilde{\Gamma}_n(A)-1/n$.
\end{proof}

\cref{thm_warm_up} is a simple observation; however, it encompasses several nontrivial
problem functions. The following three examples are of interest.
\begin{itemize}[leftmargin=0.4cm]
	\item Let $\Xi_{\mathrm{Lm}}(A)=|\spec(A)|$, where we write $|\cdot|$ for the Lebesgue measure on the real line. Often we are interested in whether $\spec(A)$ is Lebesgue-null. Hence, we also consider the decision problem:
$$
\Xi_{\mathrm{Lm}}^{\mathrm{dec}}(A)=\begin{cases}
1,\quad\text{if }|\spec(A)|=0;\\
0,\quad\text{otherwise}.
\end{cases}
$$ 
\item Another example, this time mapping into $\mathbb{N}\cup\{\infty\}$ (one-point compactification of $\mathbb{N}$), is the problem function
$$
\Xi_{\mathrm{cc}}(A)=\begin{dcases}
\text{number of connected components of $\spec(A)$,}\quad&\text{if this is finite};\\
+\infty,\quad&\text{otherwise}.
\end{dcases}
$$
We also consider the decision problem
$$
\Xi_{\mathrm{cc}}^{\mathrm{dec}}(A)=\begin{dcases}
1,\quad&\text{if }\Xi_{\mathrm{cc}}(A)<\infty;\\
0,\quad&\text{otherwise}.
\end{dcases}
$$
\item The logarithmic capacity of $\spec(A)$, $\Xi_{\mathrm{cap}}(A)=\capacity(\spec(A))$, is equal to the transfinite diameter and the Chebyshev constant, since $\spec(A)$ is a compact set \cite{Ran95,saff2010logarithmic}. Thus, capacity is a measure of the size of a set and is related to polynomial approximation theory. In fact, Halmos~\cite{halmos1971capacity} showed that
\begin{align*}
\capacity(\spec(A))&=\inf_{\text{monic polynomial }p}\|p(A)\|^{\frac{1}{\mathrm{deg}(p)}}\\
&=\lim_{d\rightarrow\infty}\inf\left\{\|p(A)\|^{\frac{1}{d}}:\text{monic polynomial }p,\mathrm{deg}(p)=d\right\}.
\end{align*}
The capacity can also be viewed via minimizers of the logarithmic energy:
$$
\energy(\mu)= -\iint \log|x-y| \dd\mu(x) \dd\mu(y),
$$
where $\mu$ is a Borel probability measure having compact support. One then defines the capacity of a compact set $K$ to be
$$
\capacity(K) =  \exp(-\inf\{ \energy(\mu): \mu \text{ is a probability measure supported on }K\}).
$$
If $\capacity (K)=0$, $K$ is called polar. For any  non-polar compact $K$, there is a unique probability measure $\rho = \rho_K$ satisfying $\energy(\rho) =-\log\capacity(K).$ For a one-dimensional Schr\"odinger operator, the capacity of the spectrum is always at least $1$ \cite{Simon2007IPI}, which gives a universal lower bound on the size of the spectrum.\footnote{\small The capacity is precisely $1$ if and only if the density of states measure is the potential-theoretic equilibrium measure of the spectrum. This happens for a large class of one-dimensional quasicrystal models, namely those one-dimensional Schr\"odinger operators generated by a locally constant sampling of a subshift satisfying Boshernitzan's criterion for unique ergodicity; see \cite{DamLen2006DMJ} for details and definitions.} As well as $\Xi_{\mathrm{cap}}$, we consider the decision problem
$$
\Xi_{\mathrm{cap}}^{\mathrm{dec}}(A)=\begin{cases}
1,\quad\text{if }\Xi_{\mathrm{cap}}(A)=0;\\
0,\quad\text{otherwise}.
\end{cases}
$$
\end{itemize}

\begin{theorem}
\label{thm:cor_assumI_lims}
Suppose that \textbf{\rm\textbf{(S1)}} holds. Then
\begin{itemize}
	\item Number of connected components:
	$
	\{\Xi_{\mathrm{cc}},\Omega,\mathbb{N}\cup\{\infty\},\Lambda\}\in\Sigma_1^A.
	$
\item Lebesgue measure:
$
\{\Xi_{\mathrm{Lm}},\Omega,\mathbb{R}_{\geq0},\Lambda\}\in\Pi_1^A.
$
\item Capacity:
$
\{\Xi_{\mathrm{cap}},\Omega,\mathbb{R}_{\geq0},\Lambda\}\in\Pi_1^A.
$
\item Decision problems: For $\Xi=\Xi_{\mathrm{cc}}^{\mathrm{dec}},\Xi_{\mathrm{Lm}}^{\mathrm{dec}}$, $\Xi_{\mathrm{cap}}^{\mathrm{dec}}$,
$
\{\Xi,\Omega,\{0,1\},\Lambda\}\in\Pi_2^A.
$
\end{itemize}
\end{theorem}

\begin{proof}
Consider the output of $\Gamma_n^{\spec}(A)$ as $M_n(A)$ closed intervals in \eqref{gamma_cover1}. The collection of intervals produced is pairwise disjoint, each interval $[a_{m,n}(A),b_{m,n}(A)]$ intersects $\spec(A)$, and the union of the intervals covers $\spec(A)$. Hence, we have $M_n(A)\leq \Xi_{\mathrm{cc}}(A).$ If $\Xi_{\mathrm{cc}}(A)<\infty$, then $\spec(A)$ is a finite union of compact intervals. In this case, since $\Gamma_n^\spec(A)\rightarrow\spec(A)$, $M_n(A)\geq \Xi_{\mathrm{cc}}(A)$ for large $n$. If $\Xi_{\mathrm{cc}}(A)=\infty$, then $\mathbb{R}\backslash\spec(A)$ can be written as a countably infinite union of open intervals. The midpoint of any such interval must eventually be in $\mathbb{R}\backslash\Gamma_n^\spec(A)$ and $\Gamma_n^\spec(A)$ covers $\spec(A)$. This implies that $M_n(A)\rightarrow\infty$ and hence 
$
\{\Xi_{\mathrm{cc}},\Omega,\mathbb{N}\cup\{\infty\},\Lambda\}\in\Sigma_1^A.$
Note that
$$
\Gamma_n(A)=\begin{cases}1,\quad &\text{if }\Gamma_n^{\spec}(A)\text{ is connected};\\
0,\quad &\text{otherwise};
\end{cases}
$$
provides a $\Pi_1^A$ algorithm for the decision problem $\Xi:\text{``Is $\spec(A)$ an interval?''}$

Since each $\Gamma_n^{\spec}(A)$ is closed with $\spec(A)\subset\Gamma_n^{\spec}(A)$ and $\lim_{n\rightarrow\infty}\Gamma_n^{\spec}(A)=\spec(A)$, the Lebesgue measure of the spectrum satisfies \eqref{limit_well-behaved}. For this problem, the output of the associated algorithm is
$$
\Gamma_n(A)=\sum_{m=1}^{M_n(A)}\big(b_{m,n}(A)-a_{m,n}(A)\big),
$$
which provides a $\Pi_1^A$ algorithm for $\Xi_{\mathrm{Lm}}(A)$. Similarly, the capacity is right-continuous as a set function, meaning that for compact sets $K_n, K$ with $K_n\downarrow K$, $\capacity(K_n)\downarrow\capacity(K)$. Hence, we can compute the capacity of each output set $\Gamma_n^{\spec}(A)$ to obtain a $\Pi_1^A$ algorithm. In practice and the examples below, we use the conformal mapping method in \cite{liesen2017fast} to compute capacities of finite unions of intervals.\footnote{\small Computing the capacity of a finite union of intervals with error control is far from trivial. For instance, if one wanted to use the conformal mapping method in \cite{liesen2017fast}, it would require an error analysis of the integral equations and linear systems that are solved.}

Finally, we consider the decision problems. For the problem $\Xi_{\mathrm{Lm}}^{\mathrm{dec}}$, we set
$$
\Gamma_{n_2,n_1}(A)=\begin{dcases}
1,\quad&\text{if }\min_{1\leq n\leq n_1}|\Gamma_n^\spec(A)|<1/n_2;\\
0,\quad&\text{otherwise}.
\end{dcases}
$$
A straightforward check shows that this provides a $\Pi_2^A$-tower. Similarly, we can build $\Pi_2^A$ algorithms for the decision problems $\Xi_{\mathrm{cc}}^{\mathrm{dec}}$ and $\Xi_{\mathrm{cap}}^{\mathrm{dec}}$.
\end{proof}

An interesting consequence of \cref{thm:cor_assumI_lims} is that the associated decision problems are strictly more complex in the SCI hierarchy than the computation of the spectral size functionals themselves. While quantities such as the number of connected components, the Lebesgue measure, and the capacity lie at the $\Sigma_1^A$ or $\Pi_1^A$ level, deciding threshold properties of these quantities requires an additional limit, placing such decision problems in $\Pi_2^A$. Thus, passing from quantitative evaluation to a yes/no statement increases the computational complexity. This illustrates a subtle but important phenomenon: determining an exact value may be computationally simpler than deciding whether this value satisfies a prescribed condition.

In \cref{thm_warm_up,thm:cor_assumI_lims}, we approximate the property of $\spec(A)$ by computing it for the given covering $\Gamma_n^{\spec}(A)$. Computing fractal dimensions is necessarily more complicated since the fractal dimension of $\Gamma_n^{\spec}(A)$ given in \eqref{gamma_cover1} is always one (unless the intervals in the covering are degenerate, in which case the spectrum is a finite point set and has zero fractal dimension). Hence, we must use a different strategy.

\subsection{Adaptive resolution for the box-counting dimension} \label{ssec:S1:bcd}

We now consider the box-counting dimension of $\spec(A)$. We use the following definition of the upper and lower box-counting dimensions \cite[Chapter 3]{falconer2004fractal}.

\begin{definition}
A $\delta$-mesh interval is an interval of the form $[m\delta,(m+1)\delta]$ for $m\in\mathbb{Z}$. Let $F\subset\mathbb{R}$ be bounded and $N_\delta(F)$ denote the number of $\delta$-mesh intervals that intersect $F$. Let $\{\delta_k\}$ be a decreasing sequence such that $\delta_{k+1}\geq c\,\delta_k$ for some $c\in(0,1)$. 
Then the lower box-counting dimension $\underline{\mathrm{dim}}_{\mathrm{B}}(F)$ 
and upper box-counting dimension $\overline{\mathrm{dim}}_{\mathrm{B}}(F)$ 
of $F$ are defined as:
$$
\underline{\mathrm{dim}}_{\mathrm{B}}(F)=\liminf_{k\rightarrow\infty}\frac{\log(N_{\delta_k}(F))}{\log(1/\delta_k)},\qquad\overline{\mathrm{dim}}_{\mathrm{B}}(F)=\limsup_{k\rightarrow\infty}\frac{\log(N_{\delta_k}(F))}{\log(1/\delta_k)}
.
$$
If $\underline{\mathrm{dim}}_{\mathrm{B}}(F)=\overline{\mathrm{dim}}_{\mathrm{B}}(F)$, we write the limit as $\mathrm{dim}_{\mathrm{B}}(F)$, the box-counting dimension of~$F$.
\end{definition}

The definition of box-counting dimensions is independent of the choice of decreasing sequence $\{\delta_k\}$, subject to the condition that $\delta_{k+1}\geq c\,\delta_k$ for some $c\in(0,1)$. The following theorem classifies the computation of $\underline{\mathrm{dim}}_{\mathrm{B}}(\spec(A))$ and $\overline{\mathrm{dim}}_{\mathrm{B}}(\spec(A))$.

\begin{theorem}[Box-counting dimension can be computed in one limit]
\label{thm:box_counting}
Suppose that \textbf{\rm\textbf{(S1)}} holds. Then
$$
\{\underline{\mathrm{dim}}_{\mathrm{B}}(\spec(\cdot)),\Omega,[0,1],\Lambda\}\in\Sigma_2^A,\quad \{\overline{\mathrm{dim}}_{\mathrm{B}}(\spec(\cdot)),\Omega,[0,1],\Lambda\}\in\Pi_2^A.
$$
Let $\widehat{\Omega}\subset\Omega$ be the subset of operators $A\in\Omega$ for which $\overline{\mathrm{dim}}_{\mathrm{B}}(\spec(A))=\underline{\mathrm{dim}}_{\mathrm{B}}(\spec(A))$. Then
$$
\{\mathrm{dim}_{\mathrm{B}}(\spec(\cdot)),\widehat{\Omega},[0,1],\Lambda\}\in\Delta_2^A.
$$ 
\end{theorem}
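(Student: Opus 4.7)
The plan is to approximate $N_\delta(\spec(A))$ by $N_\delta(\Gamma_m^{\spec}(A))$, choosing the covering index $m$ large enough that the covering error $E_m^{\spec}(A)$ is small compared to the mesh size $\delta$. The key geometric estimate is that for compact sets $K\subset K'\subset\mathbb R$ with $d_H(K,K')\le\epsilon$, every $\delta$-mesh interval meeting $K'$ lies within $\lceil\epsilon/\delta\rceil+1$ mesh-intervals of one that meets $K$, so
\[
N_\delta(K)\ \le\ N_\delta(K')\ \le\ C\bigl(1+\lceil\epsilon/\delta\rceil\bigr)\,N_\delta(K)
\]
for an absolute constant $C$. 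Requiring $\epsilon\le\delta/3$ yields a uniform bound $N_\delta(K')\le 5\,N_\delta(K)$, hence the additive exponent bound
\[
0\ \le\ \frac{\log N_\delta(K')}{\log(1/\delta)}-\frac{\log N_\delta(K)}{\log(1/\delta)}\ \le\ \frac{\log 5}{\log(1/\delta)},
\]
which vanishes as $\delta\to 0^+$.

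Fix an admissible scale sequence $\delta_k\downarrow 0$ (say $\delta_k=2^{-k}$) and set $r_k(A):=\log N_{\delta_k}(\spec(A))/\log(1/\delta_k)$. I will take $m(k):=\min\{m:E_m^{\spec}(A)\le\delta_k/3\}$; this exists and can be identified in finitely many arithmetic operations, since $E_m^{\spec}(A)$ is computable under \textbf{(S1)} and tends to zero. Define the computable proxy $\widetilde{r}_k(A):=\log N_{\delta_k}(\Gamma_{m(k)}^{\spec}(A))/\log(1/\delta_k)$. The geometric estimate gives $0\le\widetilde{r}_k(A)-r_k(A)\le\log 5/\log(1/\delta_k)$, so
\[
\limsup_{k\to\infty}\widetilde{r}_k(A)=\overline{\dim}_{\mathrm B}(\spec(A)),\qquad \liminf_{k\to\infty}\widetilde{r}_k(A)=\underline{\dim}_{\mathrm B}(\spec(A)),
\]
with $\lim_k\widetilde{r}_k(A)=\dim_{\mathrm B}(\spec(A))$ for $A\in\widehat{\Omega}$.

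The $\Pi_2^A$-tower for $\overline{\dim}_{\mathrm B}$ is
\[
\Gamma_{n_2,n_1}(A)\ =\ \max_{n_2\le k\le n_1}\widetilde{r}_k(A),
\]
which increases in $n_1$ to $\Gamma_{n_2}(A):=\sup_{k\ge n_2}\widetilde{r}_k(A)$, which in turn decreases in $n_2$ to $\overline{\dim}_{\mathrm B}(\spec(A))$ by definition of $\limsup$. The $\Sigma_2^A$-tower for $\underline{\dim}_{\mathrm B}$ is exactly analogous, replacing $\max/\sup$ by $\min/\inf$ and using $\inf_{k\ge n_2}\widetilde{r}_k(A)\uparrow\underline{\dim}_{\mathrm B}(\spec(A))$. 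For $A\in\widehat{\Omega}$, a height-one tower suffices: $\Gamma_n(A):=\widetilde{r}_n(A)\to\dim_{\mathrm B}(\spec(A))$ directly, establishing $\Delta_2^A$.

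The main obstacle is that the natural multiplicative bound $\log\bigl(C(1+\lceil\epsilon/\delta\rceil)\bigr)$ does not vanish as $\epsilon\to 0$ with $\delta$ held fixed, because of unavoidable boundary bleedover from adjacent mesh cells; one therefore cannot simply fix a resolution and let the cover improve. Coupling $\epsilon$ to $\delta$ via $\epsilon\le\delta/3$ converts the multiplicative bound into an additive exponent error decaying like $1/\log(1/\delta)$, which is exactly what is needed to identify $\widetilde{r}_k$ with $r_k$ asymptotically. Once this per-scale calibration is in place, the $\limsup$/$\liminf$ definitions of box-counting dimension assemble directly into monotone two-level towers for $\Pi_2^A$ and $\Sigma_2^A$, while their agreement on $\widehat{\Omega}$ collapses the tower height by one to give $\Delta_2^A$.
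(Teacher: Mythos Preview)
Your proposal is correct and follows essentially the same approach as the paper: couple the covering accuracy to the mesh scale (you take $E_m\le\delta/3$ and obtain factor $5$; the paper takes $E_m\le\delta$ and obtains factor $3$), deduce that the computable proxy $\widetilde r_k$ differs from $r_k$ by $O(1/\log(1/\delta_k))$, and then assemble $\max$/$\min$ over $n_2\le k\le n_1$ into $\Pi_2^A$/$\Sigma_2^A$ towers, collapsing to a single limit on $\widehat\Omega$. The only detail you elide that the paper spells out is the rational approximation of $\log N_{\delta_k}/\log(1/\delta_k)$ to accuracy $1/n_1$ so that the base-level algorithm is genuinely arithmetic; this is routine.
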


The essence of the proof is adaptively choosing the parameter $n$ in the cover $\Gamma_n^{\spec}(A)$ according to the mesh size $\delta$. We compute the spectrum to an error tolerance that scales to zero as $\delta\downarrow 0$, allowing us to compute the box-counting dimension in one limit.

\begin{proof}
Given $\delta>0$, we can compute $N_{\delta}(\spec(A))$ to within a factor of three as follows. First, since $\spec(A)\subset\Gamma_n^{\spec}(A)$, we must have
$$
N_{\delta}(\spec(A))\leq N_{\delta}\left(\Gamma_n^{\spec}(A)\right) \rlap{\quad$\forall n\in\mathbb{N}$.}
$$
Now we successively compute the error bound $E_n^{\spec}(A)$, and let $n(\delta)$ be the smallest positive integer such that $E_{n(\delta)}(A)\leq \delta$. Any covering of $\spec(A)$ by $\delta$-mesh intervals can be extended to a covering of $\Gamma_{n(\delta)}^\spec(A)$. We can do this by extending any interval in the covering at its left and right endpoints to three adjacent $\delta$-mesh intervals. It follows that
$$
N_{\delta}\left(\Gamma_{n(\delta)}^{\spec}(A)\right)\leq 3 N_{\delta}(\spec(A)).
$$
Hence, taking $\delta_k=2^{-k}$ (for example), we have
\begin{align*}
\underline{\mathrm{dim}}_{\mathrm{B}}(\spec(A))&=\liminf_{k\rightarrow\infty}\frac{\log\left(N_{\delta_k}\left(\Gamma_{n(\delta_k)}^{\spec}(A)\right)\right)}{\log(1/\delta_k)},\\
\overline{\mathrm{dim}}_{\mathrm{B}}(\spec(A))&=\limsup_{k\rightarrow\infty}\frac{\log\left(N_{\delta_k}\left(\Gamma_{n(\delta_k)}^{\spec}(A)\right)\right)}{\log(1/\delta_k)}.
\end{align*}
Let $a_{k,j}$ be an approximation of ${\log(N_{\delta_k}(\Gamma_{n(\delta_k)}^{\spec}(A)))}/{\log(1/\delta_k)}$ computed to an accuracy $1/j$. (We use these approximations since the logarithm is a transcendental function and we require an arithmetic algorithm.) If we define $
\Gamma_{n_2,n_1}(A)=\max_{n_2\leq k\leq n_1}a_{k,n_1}$, then
$$
\lim_{n_1\rightarrow\infty}\Gamma_{n_2,n_1}(A)= \sup_{ k\geq n_2}\frac{\log\left(N_{\delta_k}\left(\Gamma_{n(\delta_k)}^{\mathrm{Sp}}(A)\right)\right)}{\log(1/\delta_k)}.
$$

\noindent
Hence $\lim_{n_2\rightarrow\infty}\lim_{n_1\rightarrow\infty}\Gamma_{n_2,n_1}(A)=\overline{\mathrm{dim}}_{\mathrm{B}}(\spec(A))$, with the final limit from above. This shows that $\{\overline{\mathrm{dim}}_{\mathrm{B}}(\spec(\cdot)),\Omega,[0,1],\Lambda\}\in\Pi_2^A$. An analogous argument leads to the classification $\{\underline{\mathrm{dim}}_{\mathrm{B}}(\spec(\cdot)),\Omega,[0,1],\Lambda\}\in\Sigma_2^A$. For the final part of the theorem, note that
$$
\mathrm{dim}_{\mathrm{B}}(\spec(A))=\lim_{k\rightarrow\infty}\frac{\log\left(N_{\delta_k}\left(\Gamma_{n(\delta_k)}^{\spec}(A)\right)\right)}{\log(1/\delta_k)}
$$
for any $A\in\widehat{\Omega}$ and set $\Gamma_n(A)=a_{n,n}$.
\end{proof}

\subsection{Computing the Hausdorff dimension of the spectrum}

A possible drawback of the box-counting dimension is its lack of countable stability. For example, $F=\{0,1,1/2,1/3,\ldots\}$ is a countable union of singletons, each of which has box-counting dimension~0, yet $\mathrm{dim}_{\mathrm{B}}(F)=1/2$. Moreover, the box-counting dimension does not always exist since we may have $\underline{\mathrm{dim}}_{\mathrm{B}}(F)<\overline{\mathrm{dim}}_{\mathrm{B}}(F)$. The Hausdorff dimension provides a more robust yet complicated notion of fractal dimension. We use the following definition of the Hausdorff dimension, which agrees with the usual one for compact sets \cite[Theorem 3.13]{fernandez2014fractal}.

\begin{definition}
Let $\rho_k$ be the set of all closed dyadic intervals 
$
[2^{-k}m,2^{-k}(m+1)], m\in\mathbb{Z}
$ and let $F\subset\mathbb{R}$ be compact. Set
$$
\mathcal{A}_k(F)=\left\{\{U_i\}_{i\in I}:I\text{ is finite},F\subset\bigcup_{i\in I}U_i,U_i\in\bigcup_{\ell\geq k}\rho_\ell\right\}
$$
and define
$$
H^{d}_{k}(F)=\inf\left\{\sum_i|U_i|^d:\{U_i\}_{i\in I}\in\mathcal{A}_{k}(F)\right\},\quad H^{d}(F)=\lim_{k\rightarrow\infty}H^{d}_{k}(F).
$$
The Hausdorff dimension of $F$ is the unique number $\mathrm{dim}_\mathrm{H}(F)\in[0,1]$ such that $H^{d}(F)=0$ for $d>\mathrm{dim}_\mathrm{H}(F)$ and $H^{d}(F)=\infty$ for $d<\mathrm{dim}_\mathrm{H}(F)$.
\end{definition}

While the box-counting dimension of $\spec(A)$ need not exist, the Hausdorff dimension of $\spec(A)$ is always well-defined \cite[Section 2.2]{falconer2004fractal}. Moreover, it satisfies:
$$
\mathrm{dim}_\mathrm{H}(\spec(A))\ \leq\   \underline{\mathrm{dim}}_{\mathrm{B}}(\spec(A))\ \leq\ \overline{\mathrm{dim}}_{\mathrm{B}}(\spec(A)).
$$
In general, computing a set's Hausdorff dimension analytically or numerically is more challenging than computing its box-counting dimension. The following theorem reflects this added difficulty.

\begin{theorem}[The Hausdorff dimension can be computed in two limits]
\label{thm_Haus}
Suppose that \textbf{\rm\textbf{(S1)}} holds. Then
$\{\mathrm{dim}_{\mathrm{H}}(\spec(\cdot)),\Omega,[0,1],\Lambda\}\in\Sigma_2^A.$ 
\end{theorem}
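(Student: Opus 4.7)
The plan is to mirror the proof of \cref{thm:box_counting} but use the dyadic Hausdorff contents $H^d_k$ in place of box counts, together with the characterisation
\[
\mathrm{dim}_{\mathrm H}(F)=\sup\{d\in[0,1]:H^d(F)>0\}=\sup\{d\in[0,1]:\sup_k H^d_k(F)>0\},
\]
valid for compact $F$ since $H^d_k(F)\uparrow H^d(F)$ as $k\to\infty$. The task then reduces to detecting positivity of $H^d_k(\spec(A))$ for varying $d,k$, and a monotone threshold $1/n_2\downarrow 0$ will supply the $\Sigma_2^A$ convergence from below at the outer limit.

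At level~1 I would approximate, for each grid value $d$, each $k\leq n_1$, and each $n\leq n_1$, the quantity $H^d_k(\Gamma_n^\spec(A))$ to accuracy $1/n_1$ via a bottom-up recursion on the dyadic tree truncated to scales in $[2^{-K(n_1)},2^{-k}]$; this computes an infimum over finitely many dyadic covers of the finite union of intervals $\Gamma_n^\spec(A)$, with $K(n_1)\to\infty$ to eliminate the truncation error. The inclusion $\spec(A)\subset\Gamma_n^\spec(A)$ yields $H^d_k(\spec(A))\leq H^d_k(\Gamma_n^\spec(A))$. For the reverse comparison one takes a near-optimal dyadic cover $\{U_i\}$ of $\spec(A)$ at scales $\leq 2^{-k}$ and adjoins a controlled number of neighbouring dyadic intervals around each $U_i$ at a scale matched to $E_n^\spec(A)$ to produce a valid cover of $\Gamma_n^\spec(A)$; since $\Gamma_n^\spec(A)$ lies in the $E_n^\spec(A)$-neighbourhood of $\spec(A)$ and $E_n^\spec(A)\to 0$, this enlargement is valid for all sufficiently large $n$ with cost inflated only by a constant factor.

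With these ingredients, I would define the tower
\[
\Gamma_{n_2,n_1}(A)=\max\Bigl\{d\in\tfrac{1}{n_1}\{0,\ldots,n_1\}:\ \max_{k\leq n_1}\,\min_{n_2\leq n\leq n_1}\widehat H^d_k\bigl(\Gamma_n^\spec(A);n_1\bigr)\ \ge\ \tfrac{1}{n_2}\Bigr\},
\]
with $\max\emptyset:=0$ and $\widehat H^d_k$ the level-$n_1$ approximation above. As $n_1\to\infty$ the grid becomes dense, the approximations become exact, and $k,n$ range over $\mathbb{N}$; combined with the comparison above one obtains $\Gamma_{n_2}(A)=\sup\{d:\sup_k H^d_k(\spec(A))\geq c/n_2\}$ for a universal constant $c>0$. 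The inner $\min_{n_2\le n\le n_1}$ is non-decreasing in $n_2$, and relaxing the threshold $1/n_2\downarrow 0$ only enlarges the admissible set of $d$, so $\Gamma_{n_2}(A)$ is monotone in $n_2$ and converges upward to $\sup\{d:H^d(\spec(A))>0\}=\mathrm{dim}_{\mathrm H}(\spec(A))$, giving $\Sigma_2^A$.

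The main obstacle I anticipate is the two-sided comparison $H^d_k(\spec(A))\leq H^d_k(\Gamma_n^\spec(A))\leq C\cdot H^d_k(\spec(A))+o(1)$ with a universal constant $C$. Near-optimal dyadic covers of $\spec(A)$ can mix many scales (using tiny intervals for isolated points and larger ones for clustered regions), so a naive enlargement blows up whenever the smallest interval in the cover is much smaller than $E_n^\spec(A)$. A careful scale-aware enlargement, tailored to the local structure of both $\spec(A)$ and $\Gamma_n^\spec(A)$ and perhaps replacing each $U_i$ by a dyadic at the coarser of its native scale and $2^{-\lceil\log_2(1/E_n^\spec(A))\rceil}$, is where the work lies; this is the analogue of the factor-of-$3$ argument in \cref{thm:box_counting} but noticeably more delicate because of the multi-scale freedom of dyadic covers.
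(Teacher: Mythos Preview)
You correctly identify the obstacle and then explicitly leave it open: the two-sided comparison between $H^d_k(\Gamma_n^\spec(A))$ and $H^d_k(\spec(A))$ breaks down precisely because near-optimal dyadic covers of $\spec(A)$ may use intervals far smaller than $E_n^\spec(A)$, so enlarging by neighbours at the native scale fails to cover $\Gamma_n^\spec(A)$. Your suggested fix (coarsen each $U_i$ to scale $\max(|U_i|,E_n^\spec(A))$) is not carried through, and without it your tower does not provably converge; this is the genuine gap.

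The paper's resolution is to build your fix into the \emph{definition} rather than into the cover-enlargement step. One introduces the doubly truncated content
\[
H^d_{j,k}(F)=\inf\Bigl\{\textstyle\sum_i|U_i|^d:\{U_i\}\text{ a finite cover of }F,\ U_i\in\bigcup_{j\le\ell\le k}\rho_\ell\Bigr\},
\]
so that every admissible interval has diameter at least $2^{-k}$. Choosing $n=n(k)$ adaptively so that $E^\spec_{n(k)}(A)\le 2^{-k}$, the factor-of-$3$ argument from \cref{thm:box_counting} now applies verbatim and yields
\[
H^d_{j,k}(\spec(A))\ \le\ H^d_{j,k}\bigl(\Gamma^\spec_{n(k)}(A)\bigr)\ \le\ 3\,H^d_{j,k}(\spec(A)).
\]
Since $H^d_j(\spec(A))=\inf_k H^d_{j,k}(\spec(A))$, the paper sets $h_{n_2,n_1}(A,d)=\max_{j\le n_2}\min_{k\le n_1}H^d_{j,k}(\Gamma^\spec_{n(k)}(A))$, takes $n_1\to\infty$ to recover $h_{n_2}(A,d)\in[H^d_{n_2}(\spec(A)),3H^d_{n_2}(\spec(A))]$, and then thresholds at the fixed value $1$ while sending $n_2\to\infty$. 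Thus the indices are reorganised relative to your scheme: instead of minimising over the spectral-approximation index $n$ at a fixed scale $k$, one ties $n$ to the lower scale cut-off $k$ and minimises over $k$ at a fixed upper cut-off $j$. This is exactly the ``scale-aware'' device you anticipated, implemented so that the multi-scale difficulty never arises.
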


\begin{proof}
We define
\begin{align}
\mathcal{A}_{j,k}(F)&=\left\{\{U_i\}_{i\in I}:I\text{ is finite},F\subset\bigcup_{i\in I}U_i,U_i\in\bigcup_{j\leq \ell\leq k}\rho_\ell\right\},\label{A_haus_def}\\
H^{d}_{j,k}(F)&=\inf\left\{\sum_i|U_i|^d:\{U_i\}_{i\in I}\in\mathcal{A}_{j,k}(F)\right\}.\notag
\end{align}
We choose $n=n(k)$ adaptively to control the error in the covers, $E_{n(k)}(A)\leq 2^{-k}$. From the same arguments used to prove \cref{thm:box_counting}, we then have
$$
H^{d}_{j,k}(\spec(A))\leq H^{d}_{j,k}\left(\Gamma_{n(k)}^\spec(A)\right)\leq 3H^{d}_{j,k}(\spec(A)).
$$
We set
$$
h_{n_2,n_1}(A,d)=\max_{1\leq j\leq n_2} \min_{1\leq k\leq n_1}H^{d}_{j,k}\left(\Gamma_{n(k)}^\spec(A)\right).
$$
Due to taking minima over $k$, $\lim_{n_1\rightarrow\infty}h_{n_2,n_1}(A,d)=h_{n_2}(A,d)$ exists. Since
$$
H^{d}_{n_2}(\spec(A))=\inf_{n_1}H^{d}_{n_2,n_1}(\spec(A)),
$$
we must have
\begin{equation}
\label{eq:Haus_sandwich}
H^{d}_{n_2}(\spec(A))\leq h_{n_2}(A,d)\leq 3H^{d}_{n_2}(\spec(A)).
\end{equation}
Define
$$
\Gamma_{n_2,n_1}(A)=\max\left\{\frac{j}{2^{n_2}}:1\leq j\leq2^{n_2},h_{n_2,n_1}(A,k/2^{n_2})\geq1\text{ for }k=1,\ldots,j\right\},
$$
where we define the maximum over the empty set to be $0$. If $h_{n_2}(A,d)\geq1$, then $h_{n_2,n_1}(A,d)\geq1$ for all $n_1$; otherwise $h_{n_2,n_1}(A,d)<1$ for large $n_1$. Hence
$$
\lim_{n_1\rightarrow\infty}\Gamma_{n_2,n_1}(A)=\max\left\{\frac{j}{2^{n_2}}:1\leq j\leq2^{n_2},h_{n_2}(A,k/{2^{n_2}})\geq1\text{ for }k=1,\ldots,j\right\},
$$
where we denote this limit by $\Gamma_{n_2}(A)$. Using the monotonicity of $h_{n_2}(A,d)$ in $d$ and \eqref{eq:Haus_sandwich}, it follows that
$$
\lim_{n_2\rightarrow\infty}\Gamma_{n_2}(A)=\mathrm{dim}_\mathrm{H}(\spec(A)).
$$
Since $h_{n_2}$ is non-decreasing in $n_2$, the set $\{1/2^{n_2},2/2^{n_2},\ldots,1\}$ refines itself, convergence is monotonic from below. Hence, we obtain the stated $\Sigma_2^A$ classification.
\end{proof}

In practice, we use dynamic programming to rapidly compute the optimum covers in our approximation of the Hausdorff dimension, exploiting the hierarchical nature of the dyadic intervals.

\subsection{Lower bounds through limit-periodic operators}
\label{sec:limit_per_lower}

We now prove lower bounds, demonstrating that the above algorithms are sharp under assumption \textbf{(S1)}. We do this for the class $\Omega_{\mathrm{DS}}$ of discrete Schr\"odinger operators on $\ell^2(\mathbb{Z})$; the proofs for other classes of operators are entirely analogous. A discrete Schr\"odinger operator $A\in\Omega_{\mathrm{DS}}$ on $\ell^2(\mathbb{Z})$ is given by
$$
[A \psi](n) 
= \psi(n-1) + \psi(n+1) + V(n) \psi(n),
$$
where we assume that the function $V$ is real and bounded. To make our result precise, we assume that $\Lambda$, the information our algorithm is allowed to access, is a sequence $\{\Gamma_n^\spec\}$ such that
\begin{equation}
\label{delta_1_spec_info}
d_{\mathrm{H}}\big(\Gamma_n^\spec(A),\spec(A)\big)\leq 2^{-n}
   \quad \forall A\in\Omega_{\mathrm{DS}}, n\in\mathbb{N}.
\end{equation}
Recall from \cref{sec:assumptions_on_spec} that such a sequence can easily be altered to satisfy \textbf{(S1)}.
We want algorithms that work for all possible choices of $\{\Gamma_n^\spec\}$ that satisfy \cref{delta_1_spec_info}. Hence, we view the sequences $\{\{\Gamma_n^\spec(A)\}:A\in\Omega_{\mathrm{DS}},\{\Gamma_n^\spec\}\text{ s.t. \cref{delta_1_spec_info} holds}\}$ as the inputs to our putative algorithms. We will denote this class by $\Omega_{\mathrm{DS}}^{\rm\textbf(S1)}$ and the readable information as $\Lambda_{\rm\textbf(S1)}$.

The arguments for showing that our algorithms in this section are sharp fall into two categories:
\begin{itemize}
	\item For those problems where we have proven a $\Sigma_1^A$ or $\Pi_1^A$ upper bound, we will show that the problem does not lie in $\Delta_1^G$.
	\item For those problems where we have proven a $\Sigma_2^A$ or $\Pi_2^A$ upper bound, we will show that the problem does not lie in $\Delta_2^G$.
\end{itemize}
We will demonstrate the arguments for $\Xi_{\mathrm{Lm}}$ and $\mathrm{dim}_{\mathrm{H}}(\spec(\cdot))$, respectively. The other problem functions follow analogously.

We will also use some facts about limit-periodic operators that can be found in \cite{damanik2018spectral} (see also \cite{Avi2009CMP}). A discrete Schr\"odinger operator $A\in\Omega_{\mathrm{DS}}$ on $\ell^2(\mathbb{Z})$ is \textit{limit-periodic} if there is a sequence of periodic potentials $V_1,V_2,V_3,\ldots$ such that $\lim_{k \rightarrow\infty}\|V - V_k\|_{\infty}=0$. Equivalently \cite{Avi2009CMP}, we fix a monothetic Cantor group\footnote{\small A Cantor group is a compact, totally disconnected group with no isolated points. A monothetic group is a topological group with a dense cyclic subgroup.} $\mathcal{G}$ and an $\alpha\in \mathcal{G}$ so that $\{n\alpha : n\in\mathbb{Z}\}$ is dense in $\mathcal{G}$.\ \ If $f:\mathcal{G}\rightarrow\mathbb{R}$ is continuous, then the potential $V_f(n) = f(n\alpha)$ is limit-periodic. Let $H_f$ denote the discrete Schr\"odinger operator with potential $V_f$. We shall use the following facts.
\begin{itemize}
	\item There is a dense set $\mathcal{C}\subset C(\mathcal{G},\mathbb{R})$ such that $\underline{\mathrm{dim}}_{\mathrm{B}}(\spec(H_f))=0$ (and hence $\spec(H_f)$ has infinitely many connected components, Lebesgue measure 0, and Hausdorff dimension~0) for all $f\in\mathcal{C}$.
	\item There is a dense set $\mathcal{P}\subset C(\mathcal{G},\mathbb{R})$ such that $V_f$ is periodic for all $f\in\mathcal{P}$.\ \  Hence, if $f\in\mathcal{P}$, $\spec(H_f)$ is a union of finitely many nondegenerate
closed intervals (and hence has nonzero Lebesgue measure, fractal dimension 1, and finitely many connected components).
\end{itemize}
We are now ready to prove the lower bounds. We begin with the more straightforward argument for problems not in $\Delta_1^G$.

\begin{proof}[Proof that $
\{\Xi_{\mathrm{Lm}},\Omega_{\mathrm{DS}}^{\rm{(S1)}},\mathbb{R}_{\geq0},\Lambda_{\rm{(S1)}}\}\notin\Delta_1^G
$]
Suppose this were false and $\{\Gamma_n\}$ is a $\Delta_1^G$-tower solving the problem. We consider an input $\{\Gamma_n^\spec(H_f)\}$, where $f\in \mathcal{P}$, that satisfies \cref{delta_1_spec_info} with strict inequality. Since $\Xi_{\mathrm{Lm}}(H_f)>0$, there exists $n_0\in\mathbb{N}$ such that
$$
\Gamma_{n_0}(\{\Gamma_n^\spec(H_f)\})> 2^{-n_0}.
$$
However, $\Gamma_{n_0}(\{\Gamma_n^\spec(H_f)\})$ can only depend on $\{\Gamma_n^\spec(H_f)\}_{n=1}^N$ for some finite $N$. We may choose $f'\in\mathcal{C}$ sufficiently close to $f$ so that for any $n=1,\ldots, N$,
\begin{align*}
d_{\mathrm{H}}\big(\Gamma_n^\spec(H_f),\spec(H_{f'})\big)&\leq d_{\mathrm{H}}\big(\Gamma_n^\spec(H_f),\spec(H_{f})\big)+d_{\mathrm{H}}\big(\spec(H_{f}),\spec(H_{f'})\big)\\
&\leq d_{\mathrm{H}}\big(\Gamma_n^\spec(H_f),\spec(H_{f})\big) +\|f-f'\|_\infty < 2^{-n}.
\end{align*}
Here, we have used that the spectrum of self-adjoint operators is $1$-Lipschitz with respect to the operator norm. Since the above bound is consistent with \cref{delta_1_spec_info} for $A=H_{f'}$ and $n=1,\ldots,N$, we may extend $\{\Gamma_n^\spec(H_f)\}_{n=1}^N$ to an input $\{\Gamma_n^\spec(H_{f'})\}_{n=1}^\infty$ so that \cref{delta_1_spec_info} holds with $A=H_{f'}$ for all $n$. By consistency of general algorithms,
$$
\Gamma_{n_0}(\{\Gamma_n^\spec(H_{f'})\})=\Gamma_{n_0}(\{\Gamma_n^\spec(H_f)\})> 2^{-n_0}.
$$
But this contradicts $\{\Gamma_n\}$ being a $\Delta_1^G$-tower since $\Xi_{\mathrm{Lm}}(H_{f'})=0$ .
\end{proof}

We now consider the more complicated argument for problems not in $\Delta_2^G$. Given a candidate $\Delta_2^G$-tower $\{\Gamma_n\}$, the idea will be to build an $f$ such that the sequence $\Gamma_n(\{\Gamma_m^\spec(H_{f})\})$ cannot converge.

\begin{proof}[Proof that {$
\{\mathrm{dim}_{\mathrm{H}}(\spec(\cdot)),\Omega_{\mathrm{DS}}^{\rm{(S1)}},[0,1],\Lambda_{\mathrm{(S1)}}\}\notin\Delta_2^G
$}]
Suppose this were false and $\{\Gamma_n\}$ is a $\Delta_2^G$-tower solving the problem. We first choose $f_1\in\mathcal{C}$ such that $\|f_1\|_{\infty}\leq 1$ and input $\{\Gamma_m^\spec(H_{f_1})\}$ that satisfies \cref{delta_1_spec_info} with strict inequality. Due to the convergence of $\{\Gamma_n\}$, there exists some $n_1$ such that
$$
\Gamma_{n_1}(\{\Gamma_m^\spec(H_{f_1})\})\leq 1/4.
$$
However, $\Gamma_{n_1}(\{\Gamma_m^\spec(H_{f_1})\})$ can only depend on $\{\Gamma_m^\spec(H_{f_1})\}_{m=1}^{N_1}$ for some finite $N_1$.
We may choose $f_2$ with $f_1+f_2\in\mathcal{P}$ and $\|f_2\|_{\infty}$ sufficiently small (and not larger than $1/2$) so that for any $m=1,\ldots, N_1$,
\begin{align*}
d_{\mathrm{H}}\big(\Gamma_m^\spec(H_{f_1}),\spec(H_{f_1+f_2})\big)&\leq d_{\mathrm{H}}\big(\Gamma_m^\spec(H_{f_1}),\spec(H_{f_1})\big)+d_{\mathrm{H}}\big(\spec(H_{f_1}),\spec(H_{f_1+f_2})\big)\\
&\leq d_{\mathrm{H}}\big(\Gamma_m^\spec(H_{f_1}),\spec(H_{f_1})\big) +\|f_2\|_\infty < 2^{-m}.
\end{align*}
It follows that we may set $\Gamma_m^\spec(H_{f_1+f_2})=\Gamma_m^\spec(H_{f_1})$ for $m=1,\ldots,N_1$ and extend to obtain $\Gamma_m^\spec(H_{f_1+f_2})$ for $m>N_1$ that satisfies \cref{delta_1_spec_info} with strict inequality. Due to the convergence of $\{\Gamma_n\}$, there exists some $n_2>n_1$ such that
$$
\Gamma_{n_2}\big(\{\Gamma_m^\spec(H_{f_1+f_2})\}\big)\geq 3/4.
$$
However, $\Gamma_{n_2}\big(\{\Gamma_m^\spec(H_{f_1+f_2})\}\big)$ can only depend on $\{\Gamma_m^\spec(H_{f_1+f_2})\}_{m=1}^{N_2}$ for some finite $N_2>N_1$. We can now choose $f_3$ with $f_1+f_2+f_3\in\mathcal{C}$ and $\|f_3\|_{\infty}$ sufficiently small (and not larger than $1/4$) so that we can take $\Gamma_m^\spec(H_{f_1+f_2+f_3})=\Gamma_m^\spec(H_{f_1+f_2})$ for $m=1,\ldots,N_2$ and extend to obtain $\Gamma_m^\spec(H_{f_1+f_2+f_3})$ for $m>N_2$ that satisfies \cref{delta_1_spec_info} with strict inequality. We now continue this process inductively, switching between
$$
\Gamma_{n_k}\big(\{\Gamma_m^\spec(H_{f_1+\cdots + f_k})\}\big)\leq 1/4
$$
for odd $k$ and
$$
\Gamma_{n_k}\big(\{\Gamma_m^\spec(H_{f_1+\cdots + f_k})\}\big)\geq 3/4
$$
for even $k$. Each time, we ensure that $\|f_k\|_{\infty}\leq 2^{-k}$ and is sufficiently small so that we may take
$$
\Gamma_m^\spec(H_{f_1+\cdots +f_k})=\Gamma_m^\spec(H_{f_1+\cdots+ f_{k-1}})\quad\text{for}\quad m=1,\ldots, N_{k-1}.
$$
Clearly $f=\sum_{k=1}^\infty f_k\in C(\mathcal{G},\mathbb{R})$ (the sum converges uniformly). Moreover, by the consistency of general algorithms, $\Gamma_{n_k}(\{\Gamma_m^\spec(H_{f_1+\cdots+ f_k})\})=\Gamma_{n_k}(\{\Gamma_m^\spec(H_{f})\})$ for all $k\in\mathbb{N}$. But then $\Gamma_{n}(\{\Gamma_m^\spec(H_{f})\})$ lies infinitely often in each of the intervals $[0,1/4]$ and $[3/4,1]$ as $n\rightarrow\infty$, and so cannot converge, which is the required contradiction.
\end{proof}

\section{Examples Using \textbf{(S1)}}
\label{sec:examples1}

We now consider two examples of the machinery developed in \cref{sec:Imethods}. Both examples are aperiodic discrete Schr\"odinger operators on $\ell^2(\mathbb{Z})$ (or direct products thereof), and we use various types of periodic approximations to realize the assumption \textbf{(S1)}. We will consider more intricate two-dimensional examples in \cref{sec:examples2}.

\subsection{The almost Mathieu operator}

First, we consider the almost Mathieu operator, defined on $\ell^2(\mathbb{Z})$ for $\lambda,\alpha,\theta\in\mathbb{R}$ by
\begin{equation} \label{eq:AMO}
[H_{\lambda,\alpha,\theta}\,\psi](n)
=\psi(n-1) + \psi(n+1) + 2\lambda\cos(2\pi n\alpha+\theta)\,\psi(n).
\end{equation}
This family of operators (in the critical case $\lambda =1$) arises after a suitable gauge transformation from the Hamiltonian associated with an electron on the two-dimensional square lattice subjected to a uniform perpendicular magnetic field with flux $\alpha$ \cite{peierls1933theorie}. For the one-dimensional operators in \eqref{eq:AMO}, the threshold $\lambda=1$ represents a transition from extended states for the subcritical case, $\lambda\in[0,1)$, to localization for the supercritical case $\lambda>1$ for almost all phases and frequencies. In fact, localization holds for a.e.\ $(\alpha,\theta)$ \cite{Jito1999Annals} with precise arithmetic conditions. Recent works have studied the phase transition line for frequencies $\alpha$ in the exceptional set, showing a sharp phase transition dictated precisely by the arithmetic of $\alpha$ \cite{AvilaYouZhou2017DMJ}. The almost Mathieu operator at the critical value $\lambda=1$ is one of the most important models in physics, where it is also known as the Harper or Azbel--Hofstadter model \cite{avron2003topological}; three of Barry Simon's fifteen problems about Schr\"odinger operators for the twenty-first century feature the almost Mathieu operator \cite{simon2000schrodinger}.

 If $\alpha$ is irrational, the spectrum of $H_{\lambda,\alpha,\theta}$ is independent of $\theta$. To deal with $\theta$-independent objects for every $\alpha$, one considers the following sets:
\begin{equation}
\label{AM_union_spectrum}
\spec_{+}(\alpha,\lambda)=\bigcup_{\theta\in\mathbb{R}}\spec(H_{\lambda,\alpha,\theta}).
\end{equation}
These are continuous in the Hausdorff metric as $\alpha$ varies. 
For any $\alpha$, $\spec_{+}(\alpha,\lambda) = \lambda\spec_+(\alpha,1/\lambda)$, which follows from Aubry duality \cite{aubry1978new, avron1983almost}, so we consider $\lambda\in[0,1]$.
\Cref{AMfig1} (left) shows the famous Hofstadter butterfly \cite{hofstadter1976energy}, namely, $\spec_{+}(\alpha,1)$ as $\alpha$ varies. If $\alpha=p/q\in\mathbb{Q}$ with $p,q$ coprime and $\lambda\in[0,1]$, then \cite{avron1990measure}
$$
\spec_{+}(\alpha,\lambda)=\spec(H_{\lambda,\alpha,0})\cup \spec(H_{\lambda,\alpha,\pi/q}).
$$
Hence, we can approximate $\spec_{+}(\alpha,\lambda)$ using rational approximations of $\alpha$, for which the operator is periodic. If the period is $q$, the spectrum of the periodic approximation is the union of $q$ real intervals. This spectrum can be characterized by Bloch--Floquet theory~(see, e.g., \cite[chap.~7]{Tes99}); the ends of the intervals can be computed by solving two Hermitian eigenvalue problems involving matrices of dimension $q$ with tridiagonal structure plus entries in the top-right and bottom-left entries. For details and a discussion of how to compute the spectrum of these periodic Jacobi operators in $\mathcal{O}(q^2)$ operations, see \cite{Lam97,puelz2015spectral}. To obtain a $\Delta_1^A$ algorithm for the spectrum when $\alpha\notin\mathbb{Q}$, we consider $\spec_{+}(\alpha',\lambda)$ for $\alpha'\in\mathbb{Q}$ with $|\alpha-\alpha'|$ sufficiently small and use \cite[Proposition 7.1]{avron1990measure}:
\begin{equation}
\label{AM_cty_spectrum}
d_{\mathrm{H}}\big(\spec_{+}(\alpha,\lambda),\spec_{+}(\alpha',\lambda)\big)
\leq c\sqrt{\lambda|\alpha-\alpha'|},
\end{equation}
where $c>0$ is an explicit universal constant. 
Thus, by absorbing this error bound, we can increase the size of the intervals of $\spec_{+}(\alpha',\lambda)$ to obtain a covering that satisfies assumption \textbf{(S1)} in \cref{sec:assumptions_on_spec}.

The spectrum $\spec_+(\alpha,\lambda)$ is a Cantor set for $\lambda\neq 0$ and $\alpha\notin\mathbb{Q}$ \cite{avila2009ten}, and thus has an infinite number of connected components. When $\alpha\notin\mathbb{Q}$, $|\spec(H_{\lambda,\alpha,\theta})|=4\big|1-|\lambda|\big|$. This was conjectured based on numerical work in \cite{aubry1980analyticity}, and proven by Avila \& Krikorian~\cite{avila2006reducibility}, with earlier partial results in \cite{HelffSjos1989MSMF, avron1990measure, Last1993CMP, last1994zero, JK02}. \Cref{AMfig1} (right) shows $\spec_{+}(p/q,\lambda)$ for the convergent $p/q=377/610$ of the continued fraction expansion of $\alpha = (\sqrt{5}-1)/2$. The thinning of the spectrum as $\lambda\uparrow 1$ is visible. \Cref{AMfig2} shows approximations of the Lebesgue measure (left), the number of connected components (middle), and the capacity (right) of $\spec_{+}(\alpha,\lambda)$. Instead of indexing the algorithm $\Gamma_n^\spec$ by $n$, we have indexed it by the convergents $p/q\rightarrow\alpha$. The plots show the monotonicity of the algorithms, which are decreasing, increasing, and decreasing, respectively, from left to right as $p/q\rightarrow\alpha$. The approximation of the Lebesgue measure converges down to $4|1-|\lambda||$, the number of connected components diverges to $+\infty$ for any fixed $\lambda$, and the capacity converges down to $1$ (which is the correct analytic value \cite{DamanikFillman2022ESO}). In view of our impossibility results, one cannot in general obtain two-sided ($\Delta_1$) error control for quantities such as the Lebesgue measure or the number of connected components of the spectrum without assumptions beyond \textbf{(S1)}; at best, one-sided ($\Pi_1$ or $\Sigma_1$) control is achievable.

\begin{figure}
\centering
\raisebox{-0.5\height}{\includegraphics[width=0.32\textwidth,trim={0mm 0mm 0mm 0mm},clip]{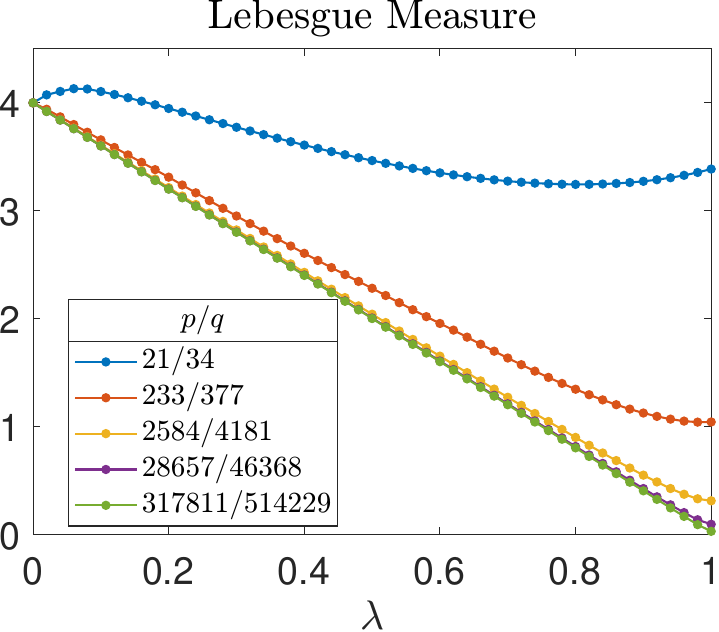}}
\hfill
\raisebox{-0.5\height}{\includegraphics[width=0.32\textwidth,trim={0mm 0mm 0mm 0mm},clip]{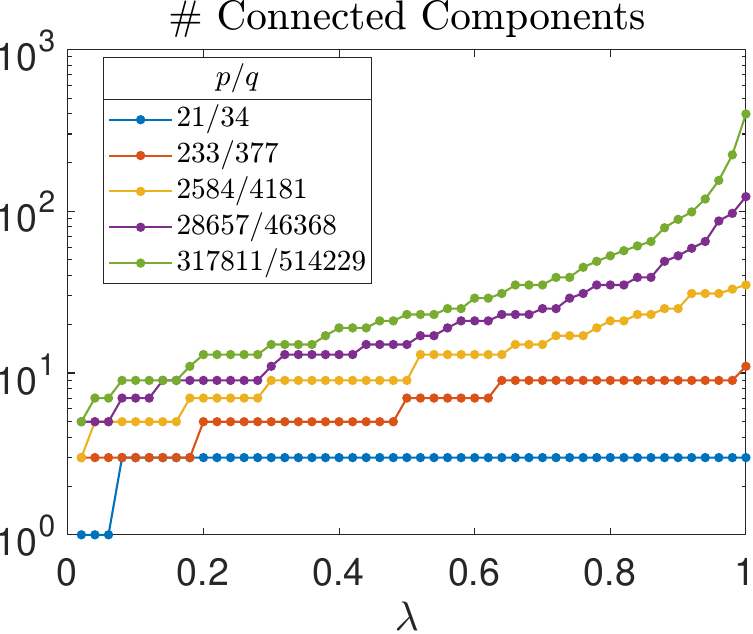}}
\hfill
\raisebox{-0.5\height}{\includegraphics[width=0.32\textwidth,trim={0mm 0mm 0mm 0mm},clip]{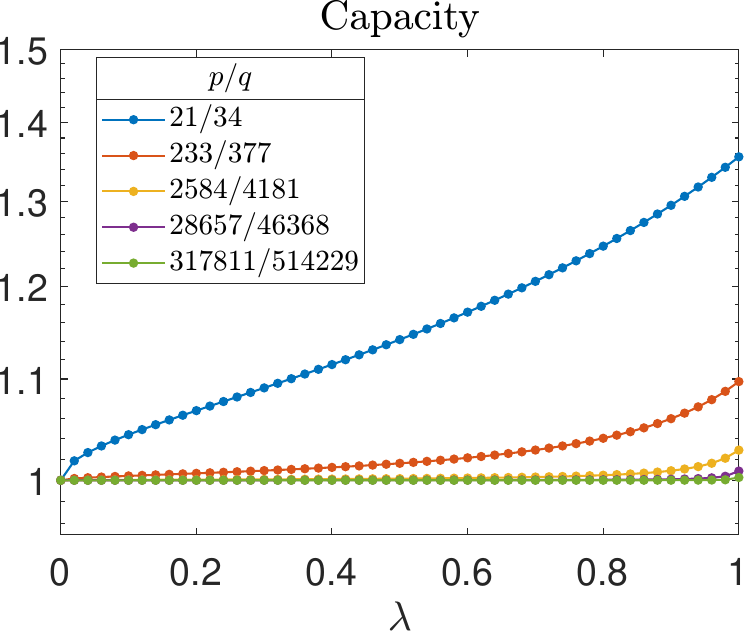}}
\caption{Results for the almost Mathieu operator with $\alpha=(\sqrt{5}-1)/2$. We have only shown the results for $\lambda\in[0,1]$ due to Aubry duality $\spec(H_{\lambda,\alpha,\theta})=\lambda\,\spec(H_{1/\lambda,\alpha,\theta})$. As discussed in the text, our results show that one-sided error control is optimal 
for these quantities. Left: Output of $\Pi_1^A$ algorithm for the Lebesgue measure of the spectrum. For a fixed $\lambda$, the approximations decrease monotonically as $|\alpha-p/q|$ decreases; the results are consistent with convergence to the known formula $4\big|1-|\lambda|\big|$. Middle: Output of the $\Sigma_1^A$ algorithm for the number of connected components of the spectrum. The approximations increase monotonically as $|\alpha-p/q|$ decreases, consistent with divergence to $+\infty$. Right: Output of the $\Pi_1^A$ algorithm for the logarithmic capacity of the spectrum. The approximations decrease monotonically and are consistent with convergence to~$1$.}
\label{AMfig2}
\end{figure}

We now fix $\lambda =1$ and consider various fractal dimensions of $\spec_{+}(\alpha,1)$ for irrational $\alpha$. (If $\alpha\in\mathbb{Q}$ or if $|\lambda|\neq 1$, then $\spec_{+}(\alpha,\lambda)$ has positive Lebesgue measure and hence fractal dimension $1$.) 
A past conjecture often attributed to Thouless \cite{wilkinson1994spectral} was that $\mathrm{dim}_{\mathrm{H}}(\spec_{+}(\alpha,1))=1/2$; see \cite{jitomirskaya2019critical_survey} for a discussion. Determining the fractal dimensions appeared in Simon's new list of problems \cite{simon2019fifty}. In \cite{jitomirskaya2019critical}, it was shown that $\mathrm{dim}_{\mathrm{H}}(\spec_{+}(\alpha,1))\leq 1/2$ for any irrational $\alpha$. The argument in \cite{jitomirskaya2019critical} can be extended to show that $\underline{\mathrm{dim}}_{\mathrm{B}}(\spec_{+}(\alpha,1))\leq 1/2$ (see also the earlier work \cite{last1994zero}) using the Lebesgue measure characterization of $\underline{\mathrm{dim}}_{\mathrm{B}}$ given in \cite[Proposition 3.2]{falconer2004fractal}. Other results include zero Hausdorff dimension for a subset of Liouville $\alpha$ \cite{last2016zero}, extended to all weakly Liouville $\alpha$ in \cite{shamis2023abominable}, and a dense positive Hausdorff dimension set of Diophantine $\alpha$ for which $\spec_+(\alpha,1)$ has positive Hausdorff dimension \cite{helffer2019positive}. For further studies, see  \cite{geisel1991new, ketzmerick1998covering, tang1986global, wilkinson1994spectral, stinchcombe1987hierarchical}.

We consider two values of $\alpha$, the (inverse) golden ratio $(\sqrt{5}-1)/2  = 0.61803\ldots$ and Cahen's constant:\footnote{\small The $s_j$ appearing in this definition are the (rapidly growing) terms in Sylvester's sequence, which arises in Euclid's proof that there are infinitely many prime numbers. }
$$
C=\sum_{j=0}^\infty\frac{(-1)^j}{s_j-1} = 0.64341\ldots,\quad\text{where } s_0=2\text{ and }s_{j+1}=1+\prod_{i=0}^js_i.
$$
These two values of $\alpha$ are chosen because of their different so-called \textit{irrationality measure}. The irrationality measure of $\alpha\in\mathbb{R}\backslash\mathbb{Q}$ is given by
$$
\mu(\alpha)=1+\limsup_{n\rightarrow\infty}\frac{\log(q_{n+1})}{\log(q_n)},
$$
where $p_n/q_n$ are the convergents from the continued fraction expansion of $\alpha$. We have
$\mu((\sqrt{5}-1)/2)=2 $ and $\mu(C)=3.$ \Cref{AMfig3} plots the covering numbers from the proof of \cref{thm:box_counting} against the interval length on a logarithmic scale. To avoid the gridding effect of the $\delta$-mesh intervals $\{[m\delta,(m+1)\delta]:m\in\mathbb{Z}\}$, we have perturbed the endpoints of the mesh by a uniform random variable and taken the average of the covering over 100 trials. For $\alpha=(\sqrt{5}-1)/2$, we get a linear fit with slope $\approx 0.500$, suggesting that $\mathrm{dim}_{\mathrm{B}}(\spec_{+}((\sqrt{5}-1)/2,1))=1/2$. This is slightly larger than the value of $0.498$ reported in \cite{wilkinson1994spectral}, though we warn the reader that it is, of course, impossible to be certain we have converged to that accuracy. An advantage of our approach is that we combine better rational approximations of $\alpha$ and use the argument in the proof of \cref{thm:box_counting} to ensure that $|p_n/q_n-\alpha|$ is sufficiently small for a given $\delta$. For $\alpha=C$, there is no apparent linear scaling, suggesting that the upper and lower box-counting dimensions disagree. More precisely, it appears that $\underline{\mathrm{dim}}_B(\spec_{+}(C,1))\approx1/2$ and $\overline{\mathrm{dim}}_B(\spec_{+}(C,1))\approx2/3$.

\begin{figure}
\centering
\raisebox{-0.5\height}{\includegraphics[width=0.441\textwidth,trim={0mm 0mm 0mm 0mm},clip]{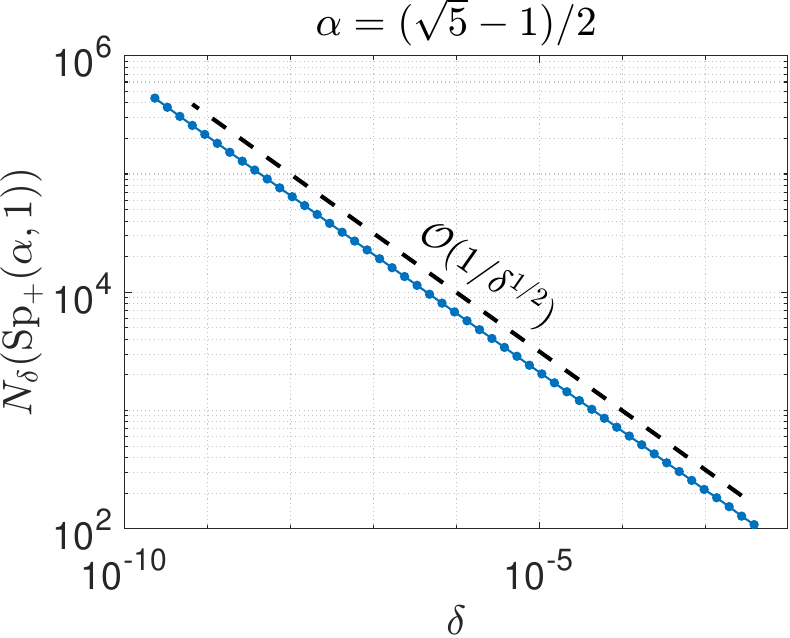}}
\hfill
\raisebox{-0.5\height}{\includegraphics[width=0.441\textwidth,trim={0mm 0mm 0mm 0mm},clip]{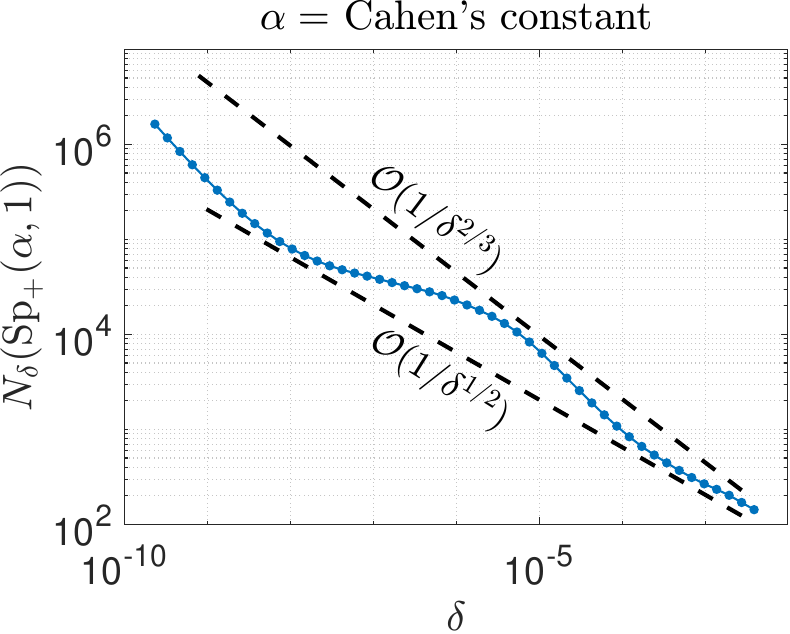}}
\caption{Covering numbers from the proof of \cref{thm:box_counting} for $\spec_{+}(\alpha,1)$ for two different values of $\alpha$. The left plot shows a scaling of exponent $1/2$, while the right plot shows an apparent lack of scaling and possible values for the upper and lower box-counting dimensions. It appears that $\underline{\mathrm{dim}}_B(\spec_{+}(C,1))\approx1/2$ and $\overline{\mathrm{dim}}_B(\spec_{+}(C,1))\approx2/3$.}
\label{AMfig3}
\end{figure}

\Cref{AMfig4} shows the output of the $\Sigma_2^A$ algorithm for computing the Hausdorff dimension of the spectrum. For $\alpha=(\sqrt{5}-1)/2$, there is strong evidence that $\mathrm{dim}_{\mathrm{H}}(\spec_{+}((\sqrt{5}-1)/2,1))\approx1/2$. Since the Hausdorff dimension provides a lower bound for the lower box-counting dimension, this also supports the conclusion $\mathrm{dim}_{\mathrm{B}}(\spec_{+}((\sqrt{5}-1)/2,1))\approx1/2$ from \cref{AMfig3}. For $\alpha=C$, \cref{AMfig4} provides evidence that $\mathrm{dim}_{\mathrm{H}}(\spec_{+}(C,1))<1/2$. However, we stress to the reader that computing the Hausdorff dimension of the spectrum requires two limits and requires one limit to provide a lower bound. We cannot, of course, be sure that the first limits in \cref{AMfig4} have converged.

\begin{figure}
\centering
\raisebox{-0.5\height}{\includegraphics[width=0.441\textwidth,trim={0mm 0mm 0mm 0mm},clip]{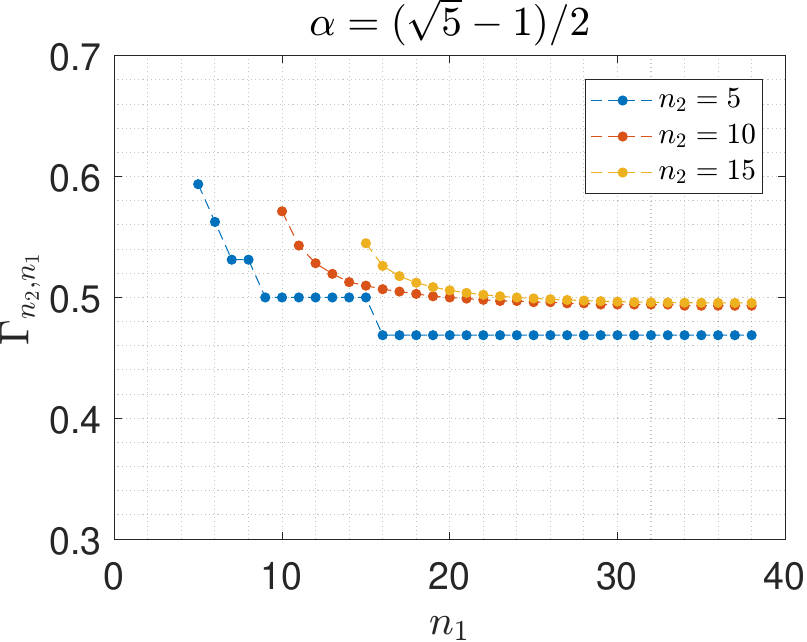}}
\hfill
\raisebox{-0.5\height}{\includegraphics[width=0.441\textwidth,trim={0mm 0mm 0mm 0mm},clip]{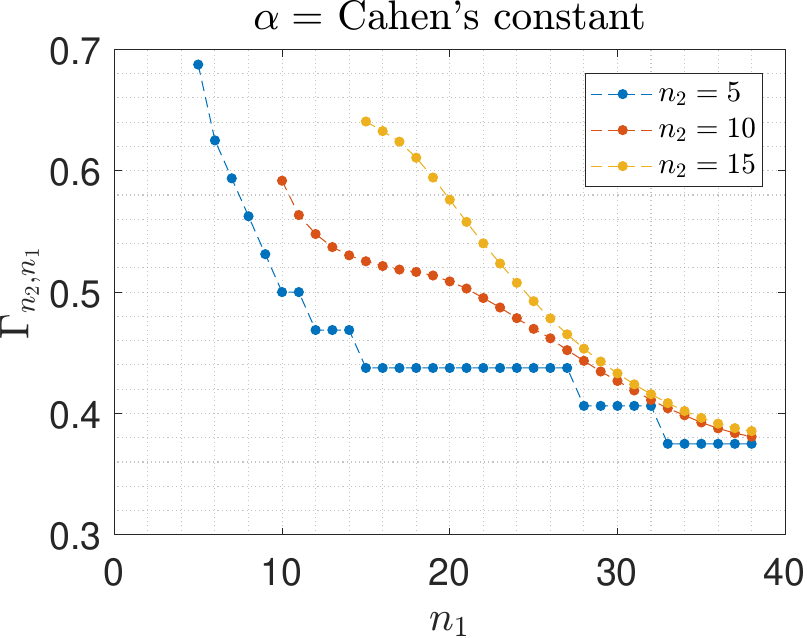}}
\caption{Results for the Hausdorff dimension of $\spec_{+}(\alpha,1)$ for two different values of $\alpha$. On the left, we see apparent convergence $\lim_{n_2\rightarrow\infty}\lim_{n_1\rightarrow\infty}\Gamma_{n_2,n_1}=1/2$. On the right, we see evidence that $\mathrm{dim}_{\mathrm{H}}(\spec_{+}(C,1))<1/2$. Both cases show the monotonicity in the successive limits and the $\Sigma_2^A$ classification from \cref{thm_Haus}.}
\label{AMfig4}
\end{figure}

\subsection{The Fibonacci Hamiltonian}
\label{sec:Fibonacci_numerics}

We now consider the Fibonacci Hamiltonian, the central model in the study of one-dimensional quasicrystals. For coupling constant $\lambda\geq 0$ and phase $\omega\in[0,1)$, the Hamiltonian is defined on $\ell^2(\mathbb{Z})$ by
\begin{equation}
\label{eq:fib_def}
[H_{\lambda,\omega}\psi](n)
=\psi(n-1)+\psi(n+1) + \lambda\chi_{[1-\alpha,1)}(n\alpha+\omega\,\,\mathrm{mod}\, 1)\psi(n),
\end{equation}
where $\alpha=(\sqrt{5}-1)/2$. Along with the almost Mathieu operator, the Fibonacci Hamiltonian is among the most heavily studied Schr\"odinger operators, with hundreds of mathematics and physics papers devoted to it. Its study was originally proposed in the physics literature in \cite{KohKadTan1983PRL} and \cite{OPRSS1983}, as this model can be explicitly dealt with via renormalization group techniques. The first mathematics papers on this operator were \cite{casdagli1986symbolic} and \cite{Suto1987CMP}, with the most comprehensive mathematical results in \cite{DamGorYes2016Invent}. The spectrum of $H_{\lambda,\omega}$ is independent of $\omega$ \cite{bellissard1989spectral}, which can be seen from strong operator convergence. Consequently, we set $\omega=0$ and write $H_{\lambda}=H_{\lambda,0}$ in what follows. 

Much is known about the spectral properties of $H_{\lambda}$, which are surveyed in \cite{damanik2012spectral, damanik2007strictly}. The spectrum $\spec(H_\lambda)$ is a Cantor set of zero Lebesgue measure for every $\lambda > 0$ \cite{sutHo1989singular} and is purely singular continuous \cite{damanik1999uniform}. Moreover, $\spec(H_\lambda)$ is a dynamically defined Cantor set \cite{DamGorYes2016Invent}, meaning that it belongs to a special class of Cantor sets with strong self-similarity properties \cite{palis1995hyperbolicity}. In particular, for any $E\in\spec(H_\lambda)$ and $\epsilon>0$,
\begin{align*}
\mathrm{dim}_{\mathrm{H}}\big((E-\epsilon,E+\epsilon)\cap\spec(H_\lambda)\big)&= \mathrm{dim}_{\mathrm{B}}\big((E-\epsilon,E+\epsilon)\cap\spec(H_\lambda)\big)\\
&= \mathrm{dim}_{\mathrm{H}}\big(\spec(H_\lambda)\big)= \mathrm{dim}_{\mathrm{B}}\big(\spec(H_\lambda)\big),
\end{align*}
and $\mathrm{dim}_{\mathrm{H}}(\spec(H_\lambda))$ is an analytic function of $\lambda$. There are constants $C_1, C_2 > 0$ such that $1-C_1\lambda\leq \mathrm{dim}_{\mathrm{H}}(\spec(H_\lambda))\leq 1-C_2\lambda$ for sufficiently small $\lambda>0$ \cite{damanik2011spectral}, and for any $\lambda>0$, $0<\mathrm{dim}_{\mathrm{H}}(\spec(H_\lambda))<1$ \cite{cantat2009bers}. In \cite{damanik2008fractal} it was shown that
$$
\lim_{\lambda\rightarrow\infty} \mathrm{dim}_{\mathrm{H}}(\spec(H_\lambda))\cdot\log(\lambda)=\log\big(1+\sqrt{2}\big).
$$

From this operator, we can build a simple two-dimensional variant, the square Fibonacci Hamiltonian, defined on $\ell^2(\mathbb{Z}^2)$ by
\begin{align*}
[H_{\lambda}^{(2)} \psi](m,n)
&=\psi(m-1,n) + \psi(m+1,n) + \psi(m,n-1) + \psi(m,n+1) \\
&\quad+ \lambda\left[\chi_{[1-\alpha,1)}(m\alpha\,\,\mathrm{mod}\, 1)+\chi_{[1-\alpha,1)}(n\alpha\,\,\mathrm{mod}\, 1)\right]\psi(m,n),
\end{align*}
and the three-dimensional cubic Fibonacci Hamiltonian, defined on $\ell^2(\mathbb{Z}^3)$ by
\begin{align*}
&\hspace{-.5cm} [H_{\lambda}^{(3)} \psi](\ell,m,n) \\
&=\psi(\ell-1,m,n) + \psi(\ell+1,m,n) +\psi(\ell,m-1,n) \\
& \quad +\psi(\ell,m+1,n)+\psi(\ell,m,n-1) + \psi(\ell,m,n+1) \\
&\quad+ \lambda\left[\chi_{[1-\alpha,1)}(\ell\alpha\,\,\mathrm{mod}\, 1)+\chi_{[1-\alpha,1)}(m\alpha\,\,\mathrm{mod}\, 1)+\chi_{[1-\alpha,1)}(n\alpha\,\,\mathrm{mod}\, 1)\right]\psi(\ell,m,n).
\end{align*}
See~\cite{EL08} for early work on these higher dimensional operators in the physics literature, and \cite[Section~7.3]{damanik2012spectral} for further numerical calculations. Since $H_{\lambda}^{(2)}$ and $H_{\lambda}^{(3)}$ can be expressed via tensor products of the one-dimensional model $H_{\lambda}$, we have
$$
\spec(H_{\lambda}^{(2)})=\{a+b:a,b\in \spec(H_{\lambda})\},\quad \spec(H_{\lambda}^{(3)})=\{a+b+c:a,b,c\in \spec(H_{\lambda})\}.
$$
These relations immediately imply that
\begin{align*}
\overline{\mathrm{dim}}_{\mathrm{B}}\big(\spec(H_{\lambda}^{(2)})\big) &\leq \min\left\{2\,\,\overline{\mathrm{dim}}_{\mathrm{B}}(\spec(H_{\lambda})),1\right\},\\
\overline{\mathrm{dim}}_{\mathrm{B}}\big(\spec(H_{\lambda}^{(3)})\big) &\leq \min\left\{3\,\,\overline{\mathrm{dim}}_{\mathrm{B}}(\spec(H_{\lambda})),1\right\}.
\end{align*}
It is known that $\spec(H_\lambda^{(2)})$ is an interval for sufficiently small $\lambda>0$ \cite{damanik2011spectral}. However, for sufficiently large $\lambda$, both $\spec(H_{\lambda}^{(2)})$ and $\spec(H_{\lambda}^{(3)})$ are Cantor sets of Hausdorff dimension strictly smaller than one, and have zero Lebesgue measure. Moreover, Yessen \cite{yessen2014hausdorff} has shown that
$$
\mathrm{dim}_{\mathrm{H}}\big(\spec(H_{\lambda}^{(2)})\big) = \mathrm{dim}_{\mathrm{B}}\big(\spec(H_{\lambda}^{(2)})\big)=\min\left\{2\,\,\mathrm{dim}_{\mathrm{H}}\big(\spec(H_\lambda)\big),1\right\},
$$
for all but at most countably many $\lambda$. Nevertheless, in general, much less is known about the sets $\spec(H_{\lambda}^{(2)})$ and $\spec(H_{\lambda}^{(3)})$ than $\spec(H_{\lambda})$.

\begin{figure}
\begin{center}
\hspace*{0.15in} $\lambda = 3/4$ \hspace*{2.3in} $\lambda=5/4$

\smallskip
\includegraphics[width=0.441\textwidth]{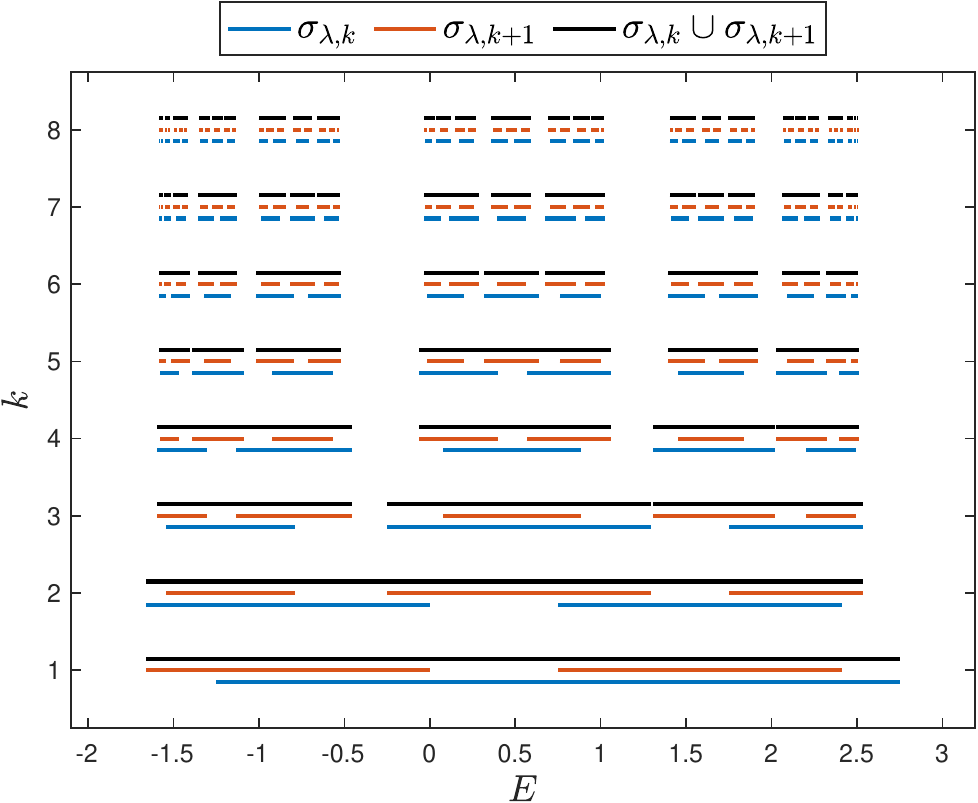}\quad
\includegraphics[width=0.441\textwidth]{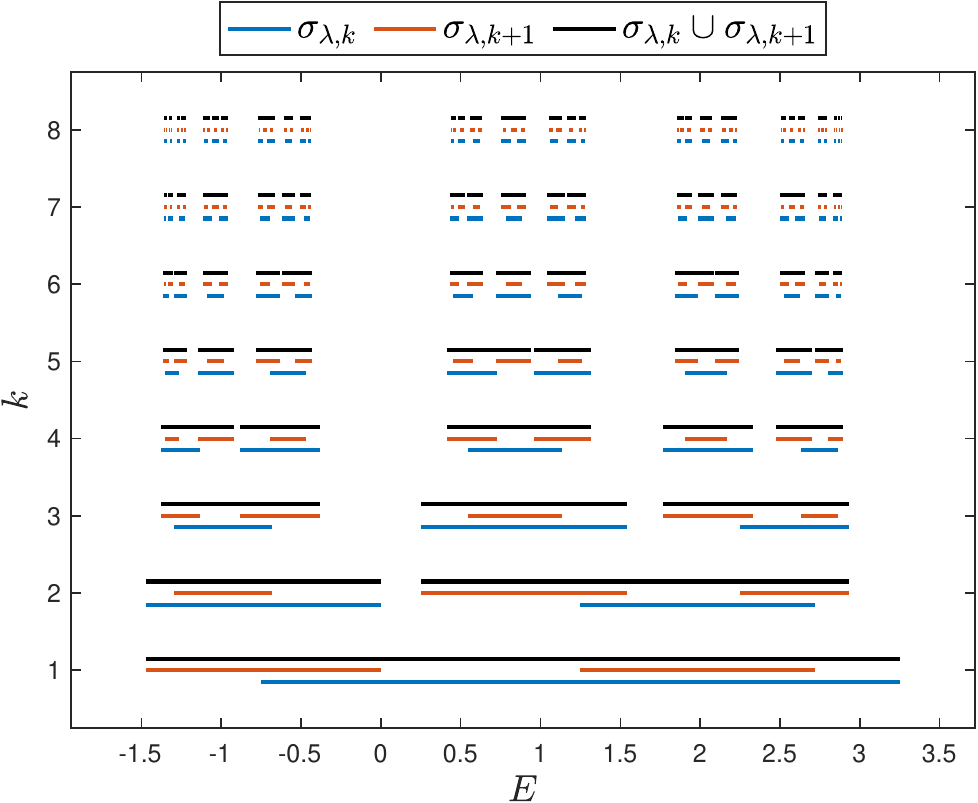}
\end{center}

\vspace*{-1em}
\caption{Covers of the spectrum of the Fibonacci Hamiltonian, $\spec(H_{\lambda})$, 
with coupling constant $\lambda=3/4$ (left) and $\lambda=5/4$ (right).
For each level $k$, $\sigma_{\lambda,k}$ denotes the spectrum of the 
$k$th periodic approximation to Fibonacci potential.  The union 
$\sigma_{\lambda,k} \cup \sigma_{\lambda,k+1}$ of two consecutive
periodic approximations covers the spectrum, $\spec(H_\lambda)$.
The fine-scale that emerges for the higher levels of $k$ here hints at
the Cantor nature of $\spec(H_\lambda)$. For $\lambda=3/4$, the $k=8$ cover has 26~disjoint intervals, the smallest of which has width $9.96\times 10^{-3}$; the smallest gap is $5.01\times 10^{-4}$.
 The analogous cover for $\lambda=5/4$ has 36~disjoint intervals, the smallest of which has width $7.73\times 10^{-3}$; the smallest gap is $2.56\times 10^{-5}$. (Zoom in to see fine structure for the larger values of $k$.)
}
\label{fig:fib_cover_plot}
\end{figure}

As early as 1987, S\"ut\H{o} showed that one can use periodic approximations of the aperiodic potential to obtain covers of the spectrum $\spec(H_\lambda)$  \cite{Suto1987CMP}.  Begin by defining $\alpha_k = F_{k-1}/F_k$ for $k=1,2,\ldots$, where $F_k$ denotes the $k$th Fibonacci number ($F_0=F_1=1$, $F_{k+1}=F_{k-1}+F_{k}$). One approximates the Fibonacci Hamiltonian~(\ref{eq:fib_def}) by replacing the irrational $\alpha$ with its rational approximation $\alpha_k$:
\begin{equation}
\label{eq:fibk_def}
[H_{\lambda,k}\psi](n)
=\psi(n-1) +\psi(n+1) + \lambda\chi_{[1-\alpha_k,1)}(n\alpha_k\,\,\mathrm{mod}\, 1)\psi(n).
\end{equation}
For $\lambda>0$, the operator $H_{\lambda,k}$ has a periodic potential with period~$F_k$, and thus its spectrum is the union of $F_k$ real intervals, which we denote by $\sigma_{\lambda,k}$. We can compute this spectrum using the same techniques we used for the almost Mathieu operator. S\"ut\H{o}~\cite[Proposition~5]{Suto1987CMP} showed that two consecutive periodic approximations give an upper bound for $\spec(H_\lambda)$:
\begin{equation} \label{eq:fib_cover}
   \spec(H_\lambda) \ \subset\  \sigma_{\lambda,k} \cup \sigma_{\lambda,k+1},
\end{equation}
and that these covers converge to $\spec(H_\lambda)$, in the sense that for any $N\ge 1$,
$$
\sigma_{\lambda,k+1} \cup \sigma_{\lambda,k+2}\ \subset\ \sigma_{\lambda,k} \cup \sigma_{\lambda,k+1},\quad
\spec(H_\lambda) = \lim_{k\rightarrow\infty} \big(\sigma_{\lambda,k} \cup \sigma_{\lambda,k+1}\big)=\bigcap_{k} \big( \sigma_{\lambda,k} \cup \sigma_{\lambda,k+1}\big).
$$
To satisfy \textbf{(S1)}, we use the fact that each band of $\sigma_k$ intersects $\spec(H_\lambda)$ \cite[Lemma 2]{damanik2008fractal}. Let $\ell_k$ be the maximum of lengths of the bands that form $\sigma_{\lambda,k}$ and $\sigma_{\lambda,k+1}$. Then
\begin{gather}
d_{\mathrm{H}}\big(\sigma_{\lambda,k} \cup \sigma_{\lambda,k+1},\spec(H_\lambda)\big)\leq \ell_k\\
d_{\mathrm{H}}\big((\sigma_{\lambda,k} \cup \sigma_{\lambda,k+1})+(\sigma_{\lambda,k} \cup \sigma_{\lambda,k+1}),\spec(H_\lambda^{(2)})\big)\leq 2\ell_k\\
d_{\mathrm{H}}\big((\sigma_{\lambda,k} \cup \sigma_{\lambda,k+1})+(\sigma_{\lambda,k} \cup \sigma_{\lambda,k+1})+(\sigma_{\lambda,k} \cup \sigma_{\lambda,k+1}),\spec(H_\lambda^{(3)})\big)\leq 3\ell_k.
\end{gather}
Since, for any $\lambda>0$, $\lim_{k\rightarrow\infty}\ell_k=0$, we immediately obtain a $\Delta_1^A$-tower of algorithms for any of $\spec(H_{\lambda})$, $\spec(H_{\lambda}^{(2)})$ and $\spec(H_{\lambda}^{(3)})$. \Cref{fig:fib_cover_plot} shows, for representative values $\lambda=3/4$ and $\lambda=5/4$, how the covers $\sigma_{\lambda,k} \cup \sigma_{\lambda,k+1}$ evolve as $k$ increases. These plots suggest how $\spec(H_\lambda)$ varies with $\lambda$, requiring us to adjust the approximation level $k$ to suit the coupling constant $\lambda$ in our subsequent computations of fractal dimension.

\begin{figure}
\centering
\raisebox{-0.5\height}{\includegraphics[width=0.49\textwidth,trim={0mm 0mm 0mm 0mm},clip]{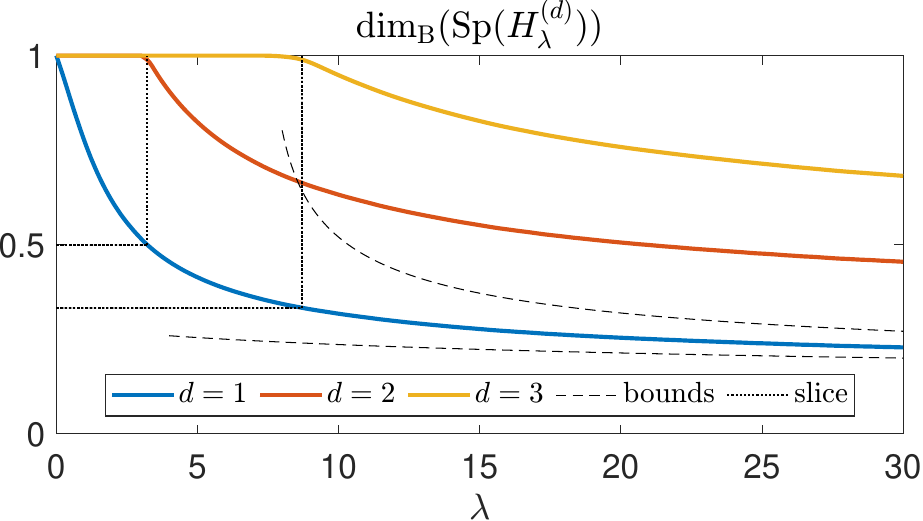}}\vspace{5mm}
\raisebox{-0.5\height}{\includegraphics[width=0.49\textwidth,trim={0mm 0mm 0mm 0mm},clip]{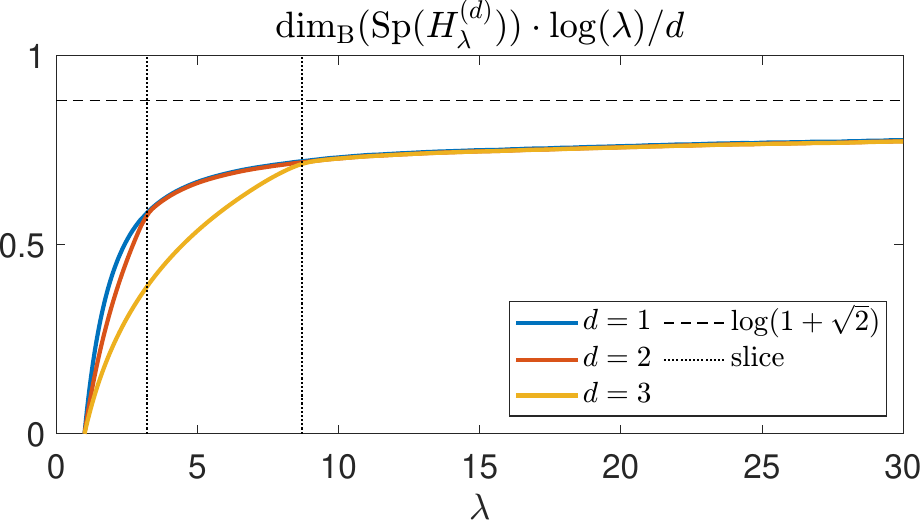}}

\vspace*{-15pt}
\caption{Fractal dimensions for the Fibonacci Hamiltonian ($H_\lambda \equiv H_\lambda^{(1)}$) and the square ($H_\lambda^{(2)}$) and cubic ($H_\lambda^{(3)}$) versions computed as a function of the coupling constant $\lambda$. In the top plot, we have included known bounds on the fractal dimension for the standard one-dimensional model. The slices correspond to where $\mathrm{dim}_{\mathrm{B}}(H_{\lambda})=1/2$ and $1/3$. The bottom plot provides evidence that the box-counting dimension scales like $d/\log(\lambda)$ as $\lambda$ increases.
}
\label{FIBfig1}
\end{figure}

For a desired resolution, the size of the required periodic approximations depends on $\lambda$. We chose approximations to ensure an error ranging from $10^{-6}$ for small $\lambda$ to $10^{-11}$ for large $\lambda$. This allowed us to obtain a scaling region for the box-counting dimension. \cref{FIBfig1} shows the computed fractal dimensions for the Fibonacci, square Fibonacci, and cubic Fibonacci Hamiltonians as $\lambda$ varies. The dashed lines in the top plot show the known bounds \cite{damanik2012spectral}:
\begin{align*}
\mathrm{dim}_{\mathrm{H}}(\spec(H_\lambda))&\leq \frac{\log(1+\sqrt{2})}{\log\left(\frac{1}{2}\left[\lambda-4+\sqrt{(\lambda-4)^2-12}\right]\right)},\quad &&\lambda\geq 8\\
\mathrm{dim}_{\mathrm{H}}(\spec(H_\lambda))&\geq \frac{\log(1+\sqrt{2})}{\log(2\lambda+22)},\quad &&\lambda\geq 4.
\end{align*}
For the Fibonacci Hamiltonian $H_{\lambda}$, the fractal dimensions are less than one for $\lambda>0$. A linear fit for small $\lambda$ suggests that $\mathrm{dim}_{\mathrm{B}}(\spec(H_\lambda))\approx 1-0.22\lambda$ for small $\lambda$. For the square Fibonacci Hamiltonian, $\mathrm{dim}_{\mathrm{B}}(H_{\lambda}^{(2)})=1$ for small $\lambda$ and becomes smaller than one for $\lambda\approx 3.2$. For the cubic Fibonacci Hamiltonian, $\mathrm{dim}_{\mathrm{B}}(H_{\lambda}^{(3)})=1$ for small $\lambda$ and becomes smaller than one for $\lambda\approx 8.7$. These transitions occur approximately where $\mathrm{dim}_{\mathrm{B}}(H_{\lambda})=1/2$ and $1/3$. (The discrepancy is likely due to computations using a finite value of $k$.) The bottom plot rescales the box-counting dimensions, providing evidence that
$$
\mathrm{dim}_{\mathrm{B}}(\spec(H_\lambda^{(d)}))=\min\left\{1,d\cdot\mathrm{dim}_{\mathrm{B}}(H_{\lambda})\right\}.
$$
It is not known whether this formula holds, apart from the case of $d=2$, where it is known to hold for all but countably many $\lambda$. We found that the Hausdorff dimension agreed with the box-counting dimension. For example, \cref{FIBfig2} shows the output of the $\Sigma_2^A$ algorithm for the Hausdorff dimension for $\lambda=15$, demonstrating the apparent convergence.

Finally, \cref{FIBfig4} shows the number of connected components for various covers. For the cubic Fibonacci Hamiltonian, there is a significant jump in the number of connected components around $\lambda=1+\sqrt{2}$ (shown as a black line). This provides evidence that there is a region of $\lambda$ for which the number of connected components is infinite, but the fractal dimension is~1.  
(See \cite[Tables~1 and~2]{damanik2012spectral} for numerical approximations of the
$\lambda$ values at which gaps in $\spec(H_\lambda^{(2)})$ and $\spec(H_\lambda^{(3)})$ 
emerge as $\lambda$ increases.)

\begin{figure}
\centering
\raisebox{-0.5\height}{\includegraphics[width=0.32\textwidth,trim={0mm 0mm 0mm 0mm},clip]{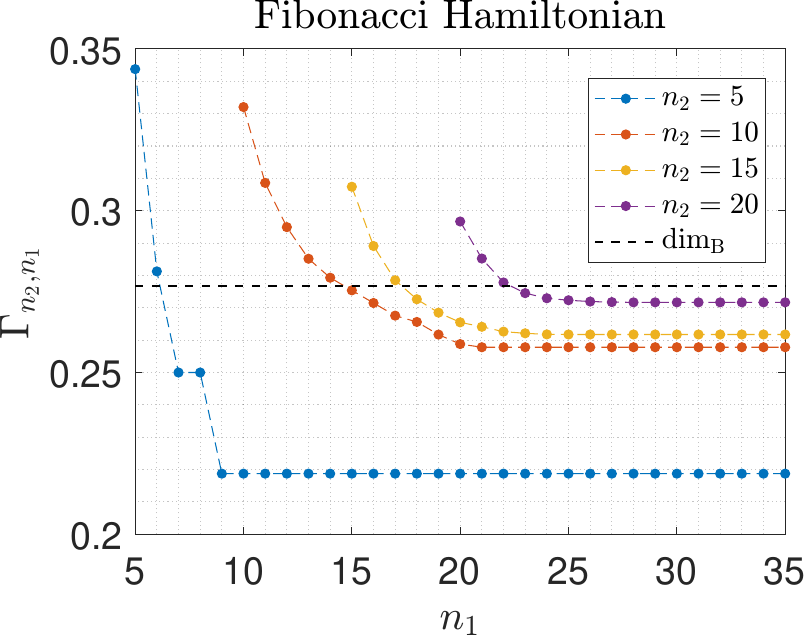}}\hfill
\raisebox{-0.5\height}{\includegraphics[width=0.32\textwidth,trim={0mm 0mm 0mm 0mm},clip]{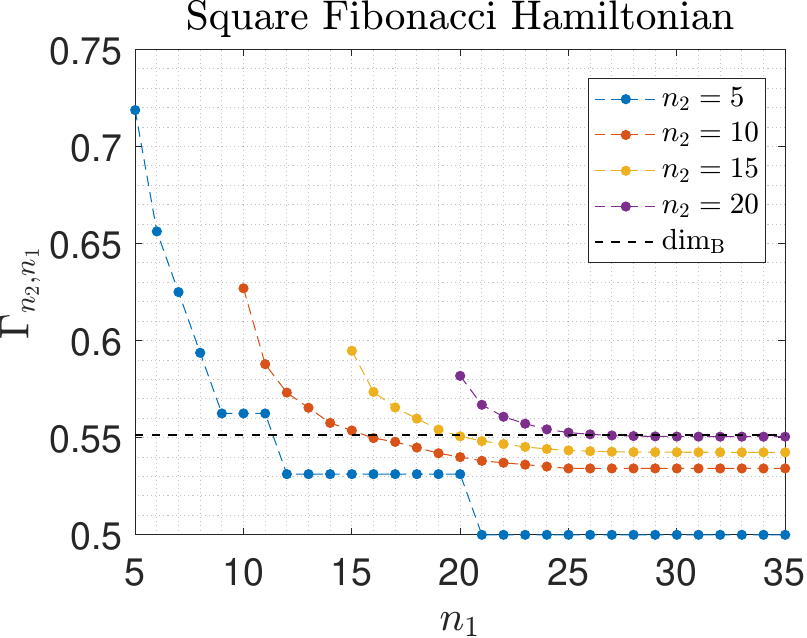}}\hfill
\raisebox{-0.5\height}{\includegraphics[width=0.32\textwidth,trim={0mm 0mm 0mm 0mm},clip]{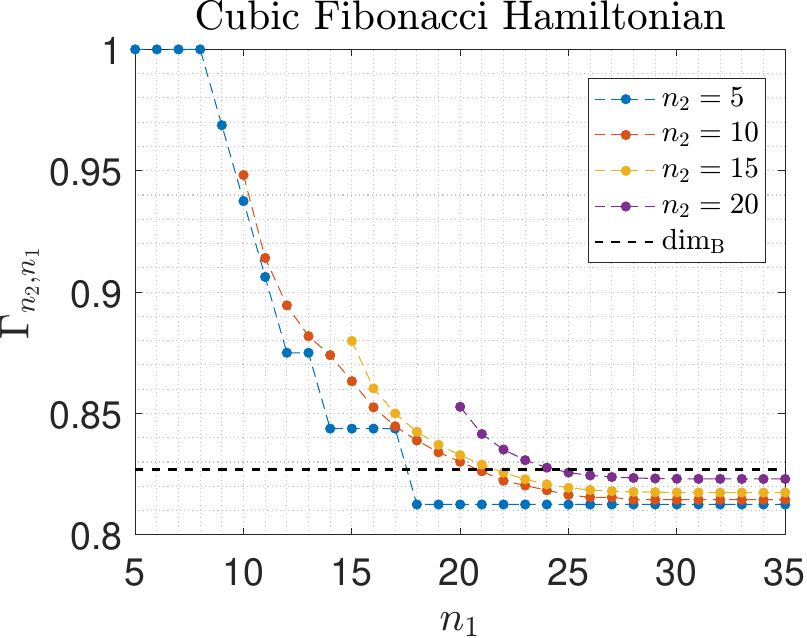}}
\caption{Output of $\Sigma_2^A$ algorithm for the Hausdorff dimension at $\lambda=15$.}
\label{FIBfig2}
\end{figure}

\begin{figure}
\centering
\raisebox{-0.5\height}{\includegraphics[width=0.49\textwidth,trim={0mm 0mm 0mm 0mm},clip]{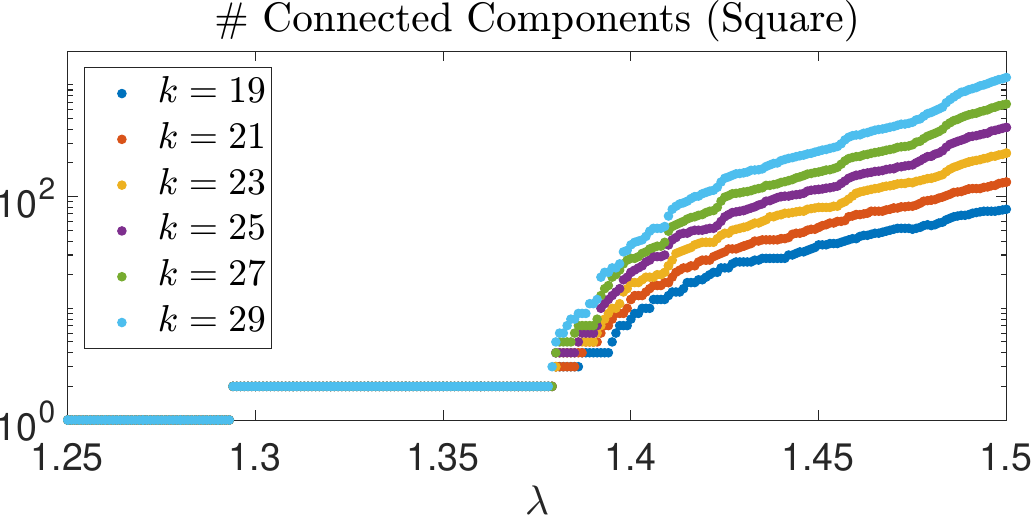}}\vspace{5mm}
\raisebox{-0.5\height}{\includegraphics[width=0.49\textwidth,trim={0mm 0mm 0mm 0mm},clip]{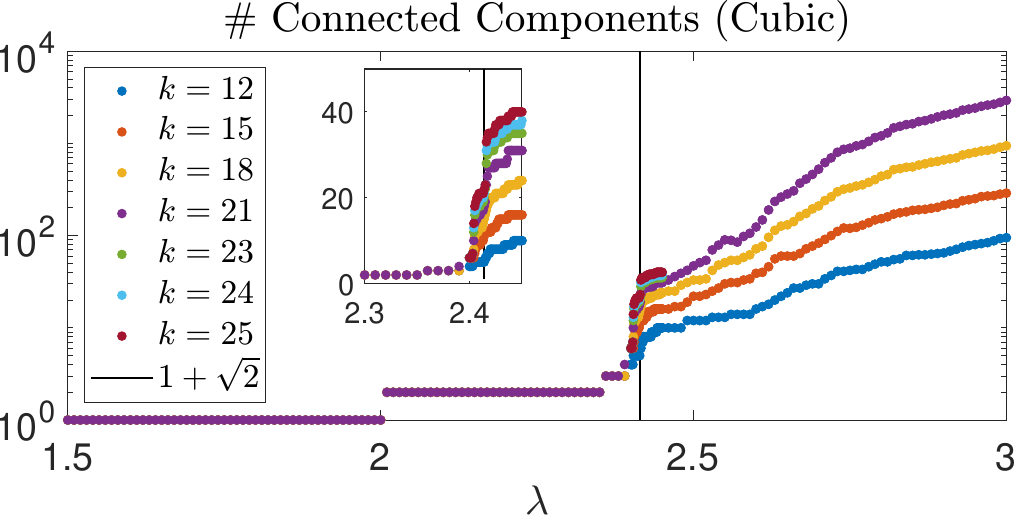}}
\vspace*{-15pt}
\caption{The number of connected components of the coverings of the spectrum for the square and cubic Fibonacci Hamiltonians. As $k$ increases, these approximations converge up to the true answer, providing a $\Sigma_1^A$ algorithm for the number of connected components.}
\label{FIBfig4}
\end{figure}

\section{Methods Under \textbf{(S2)}}
\label{sec:IImethods}

In \cref{sec:Imethods}, we showed how to compute various properties of the spectrum under the assumption \textbf{(S1)}. For many operators, it can be difficult to realize such an assumption. However, we can compute suitable covers under assumption \textbf{(S2)} at the cost of an additional limit. We will do this via a covering lemma, before showing how this leads to classifications of spectral problems. We then demonstrate an important operator class that satisfies assumption \textbf{(S2)}.

\subsection{Drilling holes in the spectrum: A covering lemma}
\label{sec:covering_lemma}

Given functions $\{\Phi_n\}$ that satisfy assumption \textbf{(S2)} (i.e., provide upper bounds on the distance to the spectrum), we compute covers of the spectrum that satisfy \textbf{(S1)} at the cost of an additional limit. For $\delta > 0$ and $R > 0$, define the grid
$$
G_{\delta,R}=\{z_j = j\cdot \delta:j\in\mathbb{Z}, |z_j|\leq R\}\subset\mathbb{R}.
$$
Given $n_1,n_2\in\mathbb{N}$, set
$$
\widehat{\Phi}_{n_1}(n_2;z,A)=\sup\left\{\frac{k}{n_2}:k\in\mathbb{Z}_{\geq 0},\;\frac{k}{n_2}\leq{\Phi}_{n_1}(z,A)\right\}=\frac{1}{n_2}\Big\lfloor n_2 \Phi_{n_1}(z, A) \Big\rfloor,
$$
and define
\begin{equation}
\label{S2_cover_alg}
\Gamma_{n_2,n_1}^\spec(A) = [-n_1,n_1]\Big\backslash \bigcup_{w\in G_{1/n_2,n_1}} D(w,\widehat{\Phi}_{n_1}(n_2;w,A)),
\end{equation}
where $D(x,r)$ is the open ball of radius $r$ centered at $x$ and is taken to be the empty set if $r=0$. Without loss of generality by computing successive minima, the functions $\Phi_n(z,A)$ are non-increasing in $n$. The intuition here is that we are removing balls not included in the spectrum from the interval $[-n_1,n_1]$ to obtain covers of the spectrum. This approach is particularly suitable for operators that have fractal-like spectra.

\begin{proposition}
\label{lem:cover1}
For any fixed $n_2$ and $A\in\Omega$, $\Gamma_{n_2,n_1}^\spec(A)$ is eventually constant for large $n_1$ and the limit set $\Gamma_{n_2}^\spec(A)=\lim_{n_1\rightarrow\infty}\Gamma_{n_2,n_1}^\spec(A)$ satisfies
$
\spec(A)\subset \Gamma_{n_2}^\spec(A) \subset \spec(A)+B_{2/n_2}(0),
$
where $B_r(0)$ denotes the closed ball of radius $r>0$ centered at the origin.
\end{proposition}

\begin{proof}
The convergence of the functions $\{\Phi_n\}$ from above implies that
$$
\lim_{n_1\rightarrow\infty}\!\widehat{\Phi}_{n_1}(n_2;z,A)\!=\!\widehat{\Phi}(n_2;z,A)\!=\!\sup\left\{\frac{k}{n_2}:k\in\mathbb{Z}_{\geq 0},\frac{k}{n_2}\!\leq\!\dist(z,\spec(A))\right\}.
$$
For any fixed $n_2$, $z$ and $A$, $\widehat{\Phi}_{n_1}(n_2;z,A)$ can only take values on a fixed grid of points separated by $1/n_2$, and so must be eventually constant for sufficiently large $n_1$. Let $L\in\mathbb{N}$ be such that $\spec(A)\subset {[-(L-2)},L-2]$. Since $\lim_{|z|\rightarrow\infty}\dist(z,\spec(A))=\infty$ and $\widehat{\Phi}_{n_1}(n_2;z,A)\geq \dist(z,\spec(A))-1/n_2$, $\Gamma_{n_2,n_1}^\spec(A)$ is constant for large $n_1$ with
$$
\Gamma_{n_2}^\spec(A)=\lim_{n_1\rightarrow\infty}\Gamma_{n_2,n_1}^\spec(A)=[-L,L]\Big\backslash \bigcup_{w\in G_{1/n_2,L}} D(w,\widehat{\Phi}(n_2;w,A)).
$$
If $z\in\spec(A)$ and $w\in G_{1/n_2,L}$,
$
|z-w|\geq \dist(w,\spec(A)) \geq \widehat{\Phi}(n_2;w,A)
$
and hence
$$
z\not\in \bigcup_{w\in G_{1/n_2,L}} D(w,\widehat{\Phi}(n_2;w,A)).
$$
It follows that $\spec(A)\subset \Gamma_{n_2}^\spec(A)$. On the other hand, if $z\in \Gamma_{n_2}^\spec(A)$, let $w$ be a nearest point in $G_{1/n_2,L}$ to $z$. Then
$
\widehat{\Phi}(n_2;w,A)<|w-z|\leq {1}/{(2n_2)}
$
so that
$$
\dist(z,\spec(A))\leq |w-z|+ \dist(w,\spec(A))\leq |w-z|+ \widehat{\Phi}(n_2;w,A)+\frac{1}{n_2}\leq \frac{2}{n_2}.
$$
The final part of the lemma follows.
\end{proof}

\subsection{Classification theorems}
\label{sec:covering_classifications}

We can now prove the analog of \cref{thm_warm_up} with assumption \textbf{(S2)} replacing assumption \textbf{(S1)}. The cost is an additional limit through the use of \cref{lem:cover1}. The lower bounds in \cite{colbrook2022computation} for diagonal operators (which satisfy \textbf{\rm\textbf{(S2)}}) show that the classifications we prove in this section are, in general, sharp.

\begin{lemma}
\label{thm_warm_up2}
Suppose that \textbf{\rm\textbf{(S2)}} holds and let $\Xi$ be a problem function such that \eqref{limit_well-behaved} holds, and we can compute $f(\Gamma_{n_2,n_1}^\spec(A))$ of a finite union of compact intervals to any given accuracy. Then $\{\Xi,\Omega,\mathbb{R}\cup\{\pm\infty\},\Lambda\}\in\Delta_3^A.$ Moreover, if the limit in \eqref{limit_well-behaved} is from above or below, then $\{\Xi,\Omega,\mathbb{R}\cup\{\pm\infty\},\Lambda\}\in\Pi_2^A$ or $\{\Xi,\Omega,\mathbb{R}\cup\{\pm\infty\},\Lambda\}\in\Sigma_2^A$, respectively.
\end{lemma}

\begin{proof}
Let $A\in\Omega$. Then \cref{lem:cover1} shows that
$$
\spec(A)\subset \Gamma_{n_2}^\spec(A),\quad \max_{z\in \Gamma_{n_2}^\spec(A)}\mathrm{dist}(z,\spec(A))\leq 2/n_2.
$$
By enlarging the intervals of $\Gamma_{n_2,n_1}^\spec(A)$ as necessary, we can ensure that the sequence $\{\Gamma_{n_2}^\spec\}$ satisfies the requirements of \textbf{(S1)}, apart from being an arithmetic algorithm. From \cref{lem:cover1}, we may compute an appropriate $\Gamma_n^{\mathrm{Sp}}$ using an additional limit, which is eventually constant. This means that all of the classifications in \cref{thm_warm_up} carry through with the penalty of an additional limit. Note that this argument relies on the crucial property that the arithmetic tower of algorithms in \cref{lem:cover1} is eventually constant.
\end{proof}

Given the examples following \cref{thm_warm_up}, the following corollary is immediate.

\begin{corollary}
\label{cor_assumII_extra_lims}
Suppose that \textbf{\rm\textbf{(S2)}} holds. Then
\begin{itemize}
	\item Number of connected components:
	$
	\{\Xi_{\mathrm{cc}},\Omega,\mathbb{N}\cup\{\infty\},\Lambda\}\in\Sigma_2^A.
	$
\item Lebesgue measure:
$
\{\Xi_{\mathrm{Lm}},\Omega,\mathbb{R}_{\geq0},\Lambda\}\in\Pi_2^A.
$
\item Capacity:
$
\{\Xi_{\mathrm{cap}},\Omega,\mathbb{R}_{\geq0},\Lambda\}\in\Pi_2^A.
$
\item For the decision problems $\Xi=\Xi_{\mathrm{cc}}^{\mathrm{dec}},\Xi_{\mathrm{Lm}}^{\mathrm{dec}}$ or $\Xi_{\mathrm{cap}}^{\mathrm{dec}}$:
$
\{\Xi,\Omega,\{0,1\},\Lambda\}\in\Pi_3^A.
$
\end{itemize}
\end{corollary}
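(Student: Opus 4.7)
The plan is to deduce each item of the corollary directly from \cref{thm_warm_up2}, using the same problem-function arguments already established in \cref{example_leb_meas,example_connected_components,ex:capacity} for the \textbf{(S1)} setting. The unifying observation is that the covers $\Gamma_{n_2}^\spec(A)$ supplied by \cref{lem:cover1} play exactly the role that the $\Delta_1^A$-cover $\Gamma_n^\spec(A)$ played in \cref{sec:Imethods}: they are finite unions of closed intervals containing $\spec(A)$ whose Hausdorff distance to $\spec(A)$ tends to zero (monotonically, after passage to minima). The cost is that producing each $\Gamma_{n_2}^\spec(A)$ requires an additional interior limit in $n_1$, which is eventually constant, and this accounts for the one-level SCI penalty in every item.

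First I would verify, for each of $\Xi_{\mathrm{cc}}$, $\Xi_{\mathrm{Lm}}$, $\Xi_{\mathrm{cap}}$, that $\Xi(A)=f(\spec(A))$ where $f$ satisfies \eqref{limit_well-behaved} with the required one-sided convergence on the covers $\Gamma_{n_2}^\spec(A)$. The arguments are line-for-line those of \cref{example_leb_meas,example_connected_components,ex:capacity}: Lebesgue measure is upper semicontinuous on a decreasing sequence of compact covers (hence monotone from above), logarithmic capacity is right-continuous on compact sets (hence monotone from above), and the number of connected components of the cover is monotone from below towards $\Xi_{\mathrm{cc}}(A)$ (finite pieces merge or persist, new gaps open as resolution improves). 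Moreover, $f(\Gamma_{n_2}^\spec(A))$ is a function of a finite union of intervals that can be computed to any accuracy in finitely many arithmetic operations -- sums of interval lengths for $\Xi_{\mathrm{Lm}}$, a count for $\Xi_{\mathrm{cc}}$, and a convergent conformal-map evaluation (as in \cite{liesen2017fast}) for $\Xi_{\mathrm{cap}}$. Invoking \cref{thm_warm_up2} then yields $\Sigma_2^A$ for $\Xi_{\mathrm{cc}}$ and $\Pi_2^A$ for $\Xi_{\mathrm{Lm}}$ and $\Xi_{\mathrm{cap}}$.

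For the three decision problems, I would reuse the construction from the corresponding example in \cref{sec:Imethods}, which realizes $\Xi_{\mathrm{Lm}}^{\mathrm{dec}}$, $\Xi_{\mathrm{cc}}^{\mathrm{dec}}$, and $\Xi_{\mathrm{cap}}^{\mathrm{dec}}$ as $\Pi_2^A$ problems \emph{under} \textbf{(S1)} by taking an extra minimum/threshold over the cover index. Concretely, for $\Xi_{\mathrm{Lm}}^{\mathrm{dec}}$ one forms
\[
\Gamma_{n_3,n_2,n_1}(A)=\begin{dcases}1,&\min_{1\leq m\leq n_2}\bigl|\Gamma_{m,n_1}^\spec(A)\bigr|<1/n_3,\\ 0,&\text{otherwise},\end{dcases}
\]
and analogously for the other two, replacing $|\cdot|$ by the connected-component count or the capacity. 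Because each $\Gamma_{m,n_1}^\spec(A)$ stabilizes as $n_1\to\infty$ by \cref{lem:cover1}, the inner $n_1$-limit exists and the standard argument used for \cref{cor_assumI_lims} shows the outer two limits produce a $\Pi_3^A$-tower.

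The only point requiring any care is the eventual-constancy of $\Gamma_{n_2,n_1}^\spec(A)$ in $n_1$, which is precisely what \cref{lem:cover1} guarantees; without it, the innermost limit would not compose cleanly with the arithmetic evaluation of $f$, and the corollary's monotonicity would be lost. I expect this bookkeeping -- ensuring that approximations of $f(\Gamma_{n_2}^\spec(A))$ converge from the correct side when the inner limit is not yet stabilized -- to be the only nontrivial aspect; it is handled exactly as in the proof of \cref{thm_warm_up2} by an $\epsilon/2$-type adjustment and by subtracting (respectively adding) $1/n_2$ to preserve one-sided convergence.
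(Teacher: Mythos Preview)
Your proposal is correct and follows essentially the same approach as the paper, which simply declares the corollary ``immediate'' from \cref{thm_warm_up2} together with the examples following \cref{thm_warm_up}. You have supplied the details the paper omits: verifying for each $f$ that the one-sided convergence of \eqref{limit_well-behaved} persists for the covers $\Gamma_{n_2}^\spec(A)$ from \cref{lem:cover1}, and explicitly writing out the three-index tower for the decision problems; your emphasis on the eventual-constancy of $\Gamma_{n_2,n_1}^\spec(A)$ in $n_1$ as the key technical point matches exactly the remark at the end of the proof of \cref{thm_warm_up2}.
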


A similar result holds for computing the fractal dimensions of the spectrum.

\begin{theorem}
\label{fractal_dim_final}
Suppose that \textbf{\rm\textbf{(S2)}} holds. Then
\begin{align*}
\{\underline{\mathrm{dim}}_{\mathrm{B}}(\spec(\cdot)),\Omega,[0,1],\Lambda\}\in\Sigma_3^A,\quad &\{\overline{\mathrm{dim}}_{\mathrm{B}}(\spec(\cdot)),\Omega,[0,1],\Lambda\}\in\Pi_2^A,\\
\{\mathrm{dim}_{\mathrm{H}}(\spec(\cdot)),\Omega,[0,1],\Lambda\}\in\Sigma_3^A,\quad&\{\mathrm{dim}_{\mathrm{B}}(\spec(\cdot)),\widehat{\Omega},[0,1],\Lambda\}\in\Pi_2^A,
\end{align*}
where $\widehat{\Omega}=\{A\in\Omega:\underline{\mathrm{dim}}_{\mathrm{B}}(\spec(A))=\overline{\mathrm{dim}}_{\mathrm{B}}(\spec(A))\}$.
\end{theorem}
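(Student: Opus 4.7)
The strategy is to lift the proofs of \cref{thm:box_counting} and \cref{thm_Haus} to the (S2) setting using \cref{lem:cover1}, which supplies covers $\Gamma^{\spec}_{n_2}(A)$ meeting the (S1) error bound $2/n_2$ via the eventually-constant sequence $\Gamma^{\spec}_{n_2,n_1}(A)\to\Gamma^{\spec}_{n_2}(A)$. Eventual constancy is the decisive feature: for the $\Pi_k^A$ classifications (upper box-counting, and the box-counting dimension on $\widehat{\Omega}$), the inner cover limit can be folded into the outer $n_1$ limit at no cost, whereas for the $\Sigma_k^A$ classifications (lower box-counting, Hausdorff) it cannot, and one extra limit is required.

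For the upper box-counting dimension I would mimic the \cref{thm:box_counting} construction with $\delta_k=2^{-k}$, setting
\begin{equation*}
\Gamma_{n_2,n_1}(A)=\max_{n_2\le k\le n_1}a_{k,n_1}(A),
\end{equation*}
where $a_{k,n_1}(A)$ is a rational approximation, to accuracy $1/n_1$, of $\log(N_{\delta_k}(\Gamma^{\spec}_{2^{k+1},n_1}(A)))/\log(1/\delta_k)$. For each fixed $k$, \cref{lem:cover1} gives $a_{k,n_1}(A)\to a_k(A):=\log(N_{\delta_k}(\Gamma^{\spec}_{2^{k+1}}(A)))/\log(1/\delta_k)$; the limiting cover has error $\le\delta_k$, so the three-fold bound from the proof of \cref{thm:box_counting} yields $\limsup_k a_k(A)=\overline{\mathrm{dim}}_{\mathrm{B}}(\spec(A))$. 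Once I verify $\lim_{n_1}\Gamma_{n_2,n_1}(A)=\sup_{k\ge n_2}a_k(A)$, the outer limit $\lim_{n_2}\sup_{k\ge n_2}a_k(A)=\overline{\mathrm{dim}}_{\mathrm{B}}(\spec(A))$ is monotone from above, yielding the $\Pi_2^A$ classification. The same tower handles the box-counting dimension on $\widehat{\Omega}$, since $\mathrm{dim}_{\mathrm{B}}=\overline{\mathrm{dim}}_{\mathrm{B}}$ there.

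For the lower box-counting and Hausdorff dimensions, the analogous min/inf-based constructions should converge from below, but over-approximation of $\spec(A)$ by the covers inflates $a_{k,n_1}(A)$ above the true quantity $\log(N_{\delta_k}(\spec(A)))/\log(1/\delta_k)$ and the min does not correct for this. I would therefore use a genuine three-level tower: the innermost variable $n_3$ realizes the eventually-constant cover from \cref{lem:cover1}, and the middle and outer variables reproduce the two-level tower of \cref{thm:box_counting} for $\underline{\mathrm{dim}}_{\mathrm{B}}$, or of \cref{thm_Haus} for $\mathrm{dim}_{\mathrm{H}}$ (using the dyadic $H^d_{j,k}$ quantities from \eqref{A_haus_def}). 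Eventual constancy in $n_3$ preserves the monotone-from-below convergence at the middle level, upgrading $\Sigma_2^A$ to $\Sigma_3^A$.

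The main obstacle will be the upper box-counting argument, specifically justifying $\lim_{n_1}\max_{n_2\le k\le n_1}a_{k,n_1}(A)=\sup_{k\ge n_2}a_k(A)$. Terms with $k$ close to $n_1$ use covers $\Gamma^{\spec}_{2^{k+1},n_1}(A)$ that need not have stabilized, and could in principle overshoot $a_k(A)$. My plan is to combine the uniform a-priori bound $a_{k,n_1}(A)\le 1+O(\log(n_1)/k)$ coming from $\Gamma^{\spec}_{2^{k+1},n_1}(A)\subset[-n_1,n_1]$ with an adaptive truncation $k\le K(n_1)$ guaranteeing cover stability across the entire max range, exploiting the monotonicity of $\widehat{\Phi}_{n_1}(n_2;w,A)$ in $n_1$ and the fact that Lemma~4.1's convergence only adds balls rather than removing them. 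This one-sided over-approximation is precisely what the $\Pi$-side tolerates but the $\Sigma$-side does not, which in the end is what explains the asymmetry between $\Pi_2^A$ and $\Sigma_3^A$ appearing in the statement.
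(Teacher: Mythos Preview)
Your overall strategy matches the paper's: lift \cref{thm:box_counting} and \cref{thm_Haus} via the covers of \cref{lem:cover1}, absorbing the extra limit for $\underline{\dim}_{\mathrm{B}}$ and $\dim_{\mathrm{H}}$, and handling $\dim_{\mathrm{B}}$ on $\widehat{\Omega}$ by reduction to $\overline{\dim}_{\mathrm{B}}$. The construction $\Gamma_{n_2,n_1}(A)=\max_{n_2\le k\le n_1}a_{k,n_1}(A)$ for $\overline{\dim}_{\mathrm{B}}$ is exactly what the paper uses.

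Where you diverge is in your diagnosis and resolution of the limit--max interchange. You worry that unstabilized terms could \emph{overshoot} $a_k(A)$ and propose an a~priori bound plus adaptive truncation to control this. In fact the direction is the opposite. Since $\Phi_{n_1}$ (hence $\widehat{\Phi}_{n_1}$) is non-increasing in $n_1$, the balls removed at stage $n_1$ are at least as large as the limiting balls; once $n_1$ exceeds some $M$ depending only on $\|A\|$ (so that $[-n_1,n_1]\supset\spec(A)$ with room to spare), one has $\Gamma^{\spec}_{2^{k+1},n_1}(A)\subset\Gamma^{\spec}_{2^{k+1}}(A)$ for \emph{every} $k$. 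Thus $a_{k,n_1}(A)\le a_k(A)+1/n_1$ uniformly in $k$, and immediately $\max_{n_2\le k\le n_1}a_{k,n_1}\le\sup_{k\ge n_2}a_k+1/n_1$. Eventual constancy for each fixed $k$ gives the reverse inequality in the limit. This is the paper's argument (stated there as the eventual inclusion $\widetilde{\Gamma}_{n,m}(A)\subset\widetilde{\Gamma}_{n,m+1}(A)$ for $m\ge M$ with $M$ independent of $n$); your a~priori bound and adaptive truncation are unnecessary. Relatedly, your phrase ``convergence only adds balls rather than removing them'' has the direction reversed: as $n_1\to\infty$ the balls \emph{shrink} and the covers \emph{grow}, which is precisely why undershooting (harmless for the $\Pi$-side max) occurs rather than overshooting.
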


\begin{proof}
The classifications for $\underline{\mathrm{dim}}_{\mathrm{B}}(\spec(\cdot))$ and $\mathrm{dim}_{\mathrm{H}}(\spec(\cdot))$ follow the same argument used in the proof of \cref{thm_warm_up2}, but now using \cref{thm:box_counting} and \cref{thm_Haus}, respectively. The only classifications left to prove are for the problem functions $\overline{\mathrm{dim}}_{\mathrm{B}}(\spec(\cdot))$ and $\mathrm{dim}_{\mathrm{B}}(\spec(\cdot))$. For any class $\Omega$, the problem function $\mathrm{dim}_{\mathrm{B}}(\spec(\cdot))$ reduces to $\overline{\mathrm{dim}}_{\mathrm{B}}(\spec(\cdot))$ when restricted to the subclass $\widehat{\Omega}$. Hence it is enough to consider the problem function $\overline{\mathrm{dim}}_{\mathrm{B}}(\spec(\cdot))$.

Let $\widetilde{\Gamma}_{n,m}=\Gamma_{2^{n},m}^\spec$ and $\widetilde{\Gamma}_{n}=\lim_{m\rightarrow\infty}\widetilde{\Gamma}_{n,m}$. \cref{lem:cover1}'s proof shows there exists $M\in\mathbb{N}$ with $\widetilde{\Gamma}_{n,m}(A)\subset\widetilde{\Gamma}_{n,m+1}(A)$ for all $m\geq M$, independent of $n$. The key point is that, once $m$ is sufficiently large, the grid of points $G_{1/n,m}$ in $[-L,L]$ used in the construction of $\widetilde{\Gamma}_{n,m}(A)$ stabilizes and those grid points outside $[-L,L]$ do not affect the set $\widetilde{\Gamma}_{n,m}(A)$  (the threshold depending only on the stabilization argument in \cref{lem:cover1}, and not on $n$). So increasing $m$ can only add points to the resulting enclosure, yielding the eventual nesting. Hence,
$$
\lim_{n_1\rightarrow\infty}\max_{n_2\leq k\leq n_1} \frac{\log(N_{2^{-k}}(\widetilde{\Gamma}_{k+1,n_1}(A)))}{\log(2^k)}=\sup_{k\geq n_2} \frac{\log(N_{2^{-k}}(\widetilde{\Gamma}_{k+1}(A)))}{\log(2^k)}.
$$
Let $a_{k,n_1}$ be an approximation of $\log(N_{2^{-k}}(\widetilde{\Gamma}_{k+1,n_1}(A)))/\log(2^k)$ computed to an accuracy $1/n_1$, and define $\Gamma_{n_2,n_1}(A) = \max_{n_2\leq k\leq n_1} a_{k,n_1}.$ Then
$$
\lim_{n_1\rightarrow\infty}\Gamma_{n_2,n_1}(A)=\Gamma_{n_2}(A)=\sup_{k\geq n_2} \frac{\log(N_{2^{-k}}(\widetilde{\Gamma}_{k+1}(A)))}{\log(2^k)}.
$$
\cref{lem:cover1} shows that $\dist(z,\spec(A))\leq 2^{-k}$ for any $z\in \widetilde{\Gamma}_{k+1}(A)$. We argue as in the proof of \cref{thm:box_counting} to deduce that
$
N_{2^{-k}}(\spec(A))\leq N_{2^{-k}}(\widetilde{\Gamma}_{k+1}(A))\leq 3 N_{2^{-k}}(\spec(A)).
$
It follows that
$$
\sup_{k\geq n_2}\frac{\log(N_{2^{-k}}(\spec(A)))}{\log(2^{k})}\leq\Gamma_{n_2}(A)\leq \sup_{k\geq n_2}\frac{\log(3)}{\log(2^{k})} +\frac{\log(N_{2^{-k}}(\spec(A)))}{\log(2^{k})}.
$$
Hence, $\Gamma_{n_2}(A)$ converges to $\overline{\rm dim}_{\rm B}(\spec(A))$ from above as $n_2\rightarrow\infty$.
\end{proof}

There is a surprise in \cref{fractal_dim_final}: the SCI of computing the upper box-counting dimension is one lower than what one would obtain by passing to an additional limit through \cref{thm:box_counting}.
However, the classification for $\underline{\mathrm{dim}}_{\mathrm{B}}(\spec(\cdot))$ cannot be lowered to $\Sigma_2^G$ (or $\Sigma_2^A$). Indeed, if this were the case, then computing the box-counting dimension (in the regime where the upper and lower box-counting dimensions coincide) would lie in $\Sigma_2^G \cap \Pi_2^G = \Delta_2^G$ (see \cite[Proposition 2.2.8]{colbrook2020PhD} for the equality). However, this is not possible in general. In particular, for suitable diagonal operators whose spectra have equal upper and lower box-counting dimensions, the computation of the box-counting dimension does not lie in $\Delta_2^G$.

\subsection{A natural class of operators}
\label{sec:bounded_disp}

To compute a suitable sequence of functions $\{\Phi_n\}$ and realize assumption \textbf{(S2)}, we make the following definition. This definition follows that of \cite{colb1} but is slightly more restrictive in that we consider bounded and self-adjoint operators.

\begin{definition}
\label{def:bounded_dispersion}
Given $f:\mathbb{N}{\rightarrow}\mathbb{N}$ with $f(n)\geq n+1$ and $A\in\mathcal{B}(\ell^2(\mathbb{N}))$ with $A=A^*$, define 
$$
D_{f\!,n}(A)=\|(I-\mathcal{P}_{f\!(n)}^*\mathcal{P}_{f\!(n)})A\mathcal{P}_n^*\|,
$$
where $\mathcal{P}_n$ is the orthogonal projection onto $\mathrm{span}\{e_1,\ldots, e_n\}$ (viewed as a map from $\ell^2(\mathbb{N})$ to $\mathbb{C}^n$, where $\{e_j\}_{j=1}^\infty$ is the canonical basis of $\ell^2(\mathbb{N})$). We say that $A$ has bounded dispersion with respect to $f$ if $\lim\limits_{n\rightarrow\infty}D_{f\!,n}(A)=0$ and set
$$
\Omega_{f,\mathrm{SA}}=\left\{A\in\mathcal{B}(\ell^2(\mathbb{N})): A=A^*,\lim_{n\rightarrow\infty}D_{f\!,n}(A)=0  \right\}.
$$
\end{definition}

Given $A\in\Omega_{f,\mathrm{SA}}$, we assume knowledge of a sequence $\{c_n\}_{n\in \mathbb{N}} \subset \mathbb{Q}$ with $D_{f\!,n}(A)\leq c_n$ and $\lim_{n\rightarrow\infty}c_n=0$.
One can show that for any self-adjoint $A\in\mathcal{B}(\ell^2(\mathbb{N}))$, there exists a suitable $f$ so that $A\in\Omega_{f,\mathrm{SA}}$. However, $f$ is generally unknown, and one can even show it is generally not computable from the matrix elements of $A$ \cite{ben2020can}. Bounded dispersion generalizes the concept of a matrix being banded or sparse by considering the decay of its columns. \cref{def:bounded_dispersion} can be extended to other Hilbert spaces. For instance, in the space $\ell^2(\mathbb{Z})$, one can adopt the same definition but with $\mathcal{P}_n$ defined as the orthogonal projection onto $\mathrm{span}\{e_{-n},\ldots, e_n\}$.

\begin{example}
Suppose that a self-adjoint $A\in\mathcal{B}(\ell^2(\mathbb{N}))$ is sparse with finitely many nonzero entries in each column and row. If we know $f$ such that $\langle A e_j,e_i\rangle=0$ if $i>f(j)$, then $A\in\Omega_{f,\mathrm{SA}}$ with $c_n=0$.\hfill$\blacksquare$
\end{example}

The following example encompasses quasicrystal models on lattices, including the examples in dimensions higher than one that will be the focus of \cref{sec:examples2}.

\begin{example}\label{CHAP3_example:graphs}
Operators on graphs or lattices are ubiquitous in mathematical physics. Consider a connected, undirected graph $\mathcal{G}$ such that the set of vertices $\mathcal{V}=\mathcal{V}(\mathcal{G})$ is countably infinite and any vertex has only finitely many edges. Suppose that a bounded self-adjoint operator is given by
$$
A=\sum_{v, w\in \mathcal{V}}\alpha(v,w)\,\delta_v^{}\delta_w^*,
$$
for some $\alpha: \mathcal{V}\times \mathcal{V} \rightarrow \mathbb{C}$. We identify any $v \in \mathcal{V}$ with the element $\delta_v \in \ell^2(\mathcal{V})$ such that $\delta_v(v) = 1$ and $\delta_v(w) = 0$ for $w \neq v$. For simplicity, assume that for any $v\in \mathcal{V}$, the set of vertices $w$ with $\alpha(v,w)\neq 0$ is finite. Also, with respect to a given enumeration $\{v_1,v_2,\ldots\}$ of $\mathcal{V}$, let $S:\mathbb{N}\rightarrow\mathbb{N}$ be such that if $m>S(n)$, then $\alpha(v_n,v_m)=\alpha(v_m,v_n)=0$. If we allow access to the functions $\alpha$ and $S$, then the induced isomorphism $\ell^2(\mathcal{V}(\mathcal{G}))\cong \ell^2(\mathbb{N})$ naturally leads to a choice of $f$ so that $A$ can be viewed as a member of $\Omega_{f,\mathrm{SA}}$. Starting with $v_1$, we list its neighbors as $S_1=\{v_1,\ldots,v_{q_1}\}$ for some finite $q_1$. We then list the neighbors of vertices (including themselves) in $S_1$ as $S_2$. This process is continued inductively to enumerate each $S_m$. For example, if $A$ is a nearest-neighbor operator on $\ell^2(\mathbb{Z}^d)$, then $\left|S_{m}\right|\sim\mathcal{O}(m^{d})$ while $\left|S_{m+1}\backslash S_m\right|\sim\mathcal{O}(m^{d-1})$. Hence, we can choose a suitable $f$ with $f(n)-n\sim \mathcal{O}(n^{\frac{d-1}{d}})$.\hfill$\blacksquare$
\end{example}

Recall that the class of problems~(S2) requires a family of functions $\{\Phi_n\}$ that bound the distance of a point $z \in \mathbb{C}$ from the spectrum.
To compute such functions for the class $\Omega_{f,\mathrm{SA}}$, our strategy will be to use the function $f$ to construct certain \textit{rectangular} matrix truncations of our operators and compute \textit{singular values}. This approach contrasts with methods that typically employ \textit{square} matrix truncations and compute \textit{eigenvalues}. We let $\sigma_{\mathrm{inf}}$ denote the injection modulus of a bounded operator $T:\mathcal{H}_1\rightarrow\mathcal{H}_2$ between Hilbert spaces:
$$
\sigma_{\mathrm{inf}}(T)=\inf\left\{\|Tx\|:x\in\mathcal{H}_1,\|x\|=1\right\}.
$$
In the case that $T$ is given by a rectangular matrix, $\sigma_{\mathrm{inf}}(T)$ is the smallest singular value of $T$. From the definition of $\Omega_{f,\mathrm{SA}}$, we see that if $A\in\Omega_{f,\mathrm{SA}}$, then
\begin{align}
\nonumber
\left|\sigma_{\mathrm{inf}}((A-zI)\mathcal{P}_n^*)-\sigma_{\mathrm{inf}}(\mathcal{P}_{f\!(n)}(A-zI)\mathcal{P}_n^*)\right|
&\leq \|(A-zI)\mathcal{P}_n^*-\mathcal{P}_{f\!(n)}^*\mathcal{P}_{f\!(n)}(A-zI)\mathcal{P}_n^*\|\\
\nonumber
&=\|(I-\mathcal{P}_{f\!(n)}^*\mathcal{P}_{f\!(n)})A\mathcal{P}_n^*\|\\
& =D_{f\!,n}(A)\leq c_n. \label{eq:spectralinjectionradiusboundcn}
\end{align}
Moreover, since increasing the domain of an operator can only decrease its injection modulus,
$$
\mathrm{dist}(z,\spec(A))=\sigma_{\mathrm{inf}}(A-zI)\leq\sigma_{\mathrm{inf}}((A-zI)\mathcal{P}_n^*), 
$$
where the equality holds since $A$ is self-adjoint. The previous two inequalities suggest setting
\begin{equation}
\label{thm_Phi_candidate}
\Phi_n(z,A)=\sigma_{\mathrm{inf}}(\mathcal{P}_{f\!(n)}(A-zI)\mathcal{P}_n^*)+c_n.
\end{equation}
The next theorem shows that these functions satisfy the requirements of assumption \textbf{(S2)}.

\begin{theorem}
Let $A\in\Omega_{f,\mathrm{SA}}$ and $\Phi_n(z,A)$ be given by \cref{thm_Phi_candidate}. Then
$$
\mathrm{dist}(z,\spec(A))\leq \Phi_n(z,A),\quad\text{and}\quad \lim_{n\rightarrow \infty}\Phi_n(z,A)=\mathrm{dist}(z,\spec(A)),
$$
where the convergence is uniform on compact subsets of $\mathbb{R}$. Moreover, we can compute $\Phi_n(z,A)$ to any given accuracy using finitely many arithmetic operations and comparisons. In particular, the functions $\{\Phi_n\}$ satisfy the requirements of assumption \textbf{\rm\textbf{(S2)}}.
\end{theorem}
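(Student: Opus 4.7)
The plan is to verify the three claims in order: the upper bound, convergence (pointwise, then uniform on compact sets), and finite computability of $\Phi_n$. The upper bound $\mathrm{dist}(z,\spec(A))\leq \Phi_n(z,A)$ is essentially recorded in the display preceding the theorem. Because $z \in \mathbb{R}$ and $A=A^*$, the operator $A-zI$ is self-adjoint, so $\mathrm{dist}(z,\spec(A)) = \sigma_{\mathrm{inf}}(A-zI)$. Restricting the domain to a subspace cannot decrease the injection modulus, giving $\sigma_{\mathrm{inf}}(A-zI) \leq \sigma_{\mathrm{inf}}((A-zI)\mathcal{P}_n^*)$, and \eqref{eq:spectralinjectionradiusboundcn} yields $\sigma_{\mathrm{inf}}((A-zI)\mathcal{P}_n^*) \leq \sigma_{\mathrm{inf}}(\mathcal{P}_{f(n)}(A-zI)\mathcal{P}_n^*) + c_n$. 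Concatenating the three inequalities proves the first claim.

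For pointwise convergence, since $c_n \to 0$ and the two injection moduli in \eqref{eq:spectralinjectionradiusboundcn} differ by at most $c_n$, it suffices to show $\sigma_{\mathrm{inf}}((A-zI)\mathcal{P}_n^*) \to \sigma_{\mathrm{inf}}(A-zI)$ as $n \to \infty$. The sequence is non-increasing in $n$ (larger admissible subspace) and bounded below by $\sigma_{\mathrm{inf}}(A-zI)$. To match this lower bound in the limit, given $\epsilon > 0$ I would pick a unit vector $v \in l^2(\mathbb{N})$ with $\|(A-zI)v\| < \sigma_{\mathrm{inf}}(A-zI) + \epsilon/2$; by density of $\bigcup_n \mathrm{ran}(\mathcal{P}_n^*)$ in $l^2(\mathbb{N})$, for $n$ large there is a $w \in \mathrm{ran}(\mathcal{P}_n^*)$ arbitrarily close to $v$, and a short computation (accounting for normalization by $\|w\| \geq 1-\|v-w\|$ and bounding $\|(A-zI)(v-w)\| \leq \|A-zI\|\,\|v-w\|$) gives $\|(A-zI)w\|/\|w\| < \sigma_{\mathrm{inf}}(A-zI) + \epsilon$.

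For uniform convergence on a compact set $K \subset \mathbb{R}$, both $z \mapsto \Phi_n(z,A)$ and $z \mapsto \mathrm{dist}(z,\spec(A))$ are $1$-Lipschitz in $z$, since perturbing the argument by $(z-w)I$ changes any injection modulus by at most $|z-w|$. The family $\{\Phi_n(\cdot,A)\}$ is therefore equicontinuous on $K$. Combined with pointwise convergence, a standard $\epsilon/3$-net argument on $K$---covering $K$ by finitely many balls of radius $\epsilon/3$, then estimating $\Phi_n(z,A) - \mathrm{dist}(z,\spec(A))$ at an arbitrary $z \in K$ through the nearest net point via the Lipschitz bounds and pointwise convergence at the centers---delivers the required uniform convergence.

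Finally, $M := \mathcal{P}_{f(n)}(A-zI)\mathcal{P}_n^*$ is a concrete $f(n) \times n$ matrix whose entries are the inner products $\langle Ae_j,e_i\rangle$ available through $\Lambda$ (shifted by $-z$ on the diagonal for $i \leq n$). Its smallest singular value equals $\sqrt{\lambda_{\min}(M^*M)}$, and the smallest eigenvalue of the Hermitian positive-semidefinite $n \times n$ matrix $M^*M$ is computable to any prescribed accuracy using finitely many arithmetic operations and comparisons (e.g., shifted QR iteration or bisection applied to a tridiagonalization); adding the rational $c_n$ yields $\Phi_n(z,A)$. The main technical hurdle is the uniform convergence step, and the only subtle analytic input is the self-adjointness of $A-zI$ used to identify $\sigma_{\mathrm{inf}}(A-zI)$ with $\mathrm{dist}(z,\spec(A))$---everything else is routine infinite-dimensional linear algebra combined with a Dini-type compactness argument.
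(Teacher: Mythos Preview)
Your proof is correct and follows essentially the same outline as the paper's: the upper bound is identical, and the pointwise convergence argument is the same approximation-by-finitely-supported-vectors idea. Two points of execution differ and are worth noting.

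For uniform convergence, the paper invokes Dini's theorem: it observes that $n\mapsto\sigma_{\mathrm{inf}}((A-zI)\mathcal{P}_n^*)$ is monotone non-increasing and converges pointwise to the continuous function $\mathrm{dist}(z,\spec(A))$, so Dini gives uniform convergence on compacta, after which \eqref{eq:spectralinjectionradiusboundcn} transfers this to $\Phi_n$. Your route via the $1$-Lipschitz property and an equicontinuity/$\epsilon/3$-net argument is equally valid and in some sense more robust, since it does not require monotonicity; the paper's Dini argument is slightly quicker to state once the monotonicity is in hand.

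For computability, the paper avoids forming $M^*M$ by instead building the Hermitian $(n+f(n))\times(n+f(n))$ Jordan--Wielandt block matrix
\[
B_n(z)=\begin{pmatrix}0 & M^*\\ M & 0\end{pmatrix},
\]
whose eigenvalues are $\pm\sigma_i$ together with $f(n)-n$ zeros, and then appeals to the $\Delta_1^A$ computability of eigenvalues of finite self-adjoint matrices \cite[Corollary~6.9]{colbrook2022foundations}. Your approach through $\lambda_{\min}(M^*M)$ is mathematically equivalent; the block-matrix route simply sidesteps the squaring and the subsequent square root.
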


\begin{proof}
We have already argued why $\mathrm{dist}(z,\spec(A))\leq \Phi_n(z,A)$. To prove the convergence, let $\epsilon>0$ and choose $x\in \ell^2(\mathbb{N})$ of norm 1 such that $\|(A-zI)x\|\leq \sigma_{\mathrm{inf}}(A-zI)+\epsilon$. Then
$$
\Phi_n(z,A)\leq \frac{\|\mathcal{P}_{f\!(n)}(A-zI)\mathcal{P}_n^*\mathcal{P}_nx\|}{\|\mathcal{P}_nx\|}+c_n.
$$
The first quantity on the right-hand side converges to $\|(A-zI)x\|$ as $n\rightarrow\infty$, while the second converges to zero. It follows that $\limsup_{n\rightarrow\infty}\Phi_n(z,A)\leq \|(A-zI)x\|\leq \sigma_{\mathrm{inf}}(A-zI)+\epsilon$. Since $\epsilon>0$ was arbitrary, the pointwise convergence now follows. Furthermore, we may upgrade the pointwise convergence of $\sigma_{\mathrm{inf}}(\mathcal{P}_{f\!(n)}(A-zI)\mathcal{P}_n^*)$ to local uniform convergence by first applying Dini's theorem to $\sigma_{\mathrm{inf}}((A-zI)\mathcal{P}_n^*)$ and then appealing to \eqref{eq:spectralinjectionradiusboundcn}. The local uniform convergence of $\Phi_n(z,A)$ follows.

It remains to prove the assertion on computability. Consider the self-adjoint matrix
$$
B_n(z)=\begin{pmatrix}
		0 & \mathcal{P}_{n}(A-zI)^*\mathcal{P}^*_{f\!(n)} \\
		\mathcal{P}_{f\!(n)}(A-zI)\mathcal{P}_{n}^* & 0
		\end{pmatrix}\in\mathbb{C}^{(n+f(n))\times (n+f(n))},
$$
and list the singular values of the rectangular matrix $\mathcal{P}_{f\!(n)}(A-zI)\mathcal{P}_{n}^*$ as $\{\sigma_1,\ldots, \sigma_n\}$. The eigenvalues of $B_n(z)$ are $\{\sigma_1,\ldots, \sigma_n,-\sigma_1,\ldots, -\sigma_n,0,\ldots,0\}$ where $0$ appears $f(n)-n$ times \cite[Section 8.6.1]{golub2013matrix}. The final part of the theorem now follows from the computability of the eigenvalues of finite self-adjoint matrices \cite[Corollary 6.9]{colbrook2022foundations}.
\end{proof}

\section{Models of Two-Dimensional Quasicrystals}
\label{sec:examples2}

We now turn to models of two-dimensional quasicrystals. Spectral analysis in dimensions higher than one is often substantially more intricate than in the one-dimensional setting \cite{damanik2012spectral}. While a rich body of analytic results exists for certain one-dimensional Schr\"odinger operators, comparatively little is known in general or in higher dimensions \cite{damanik2015absolutely}. (An exception:  tensor products of one-dimensional models, such as the square and cubic Fibonacci Hamiltonians; see, e.g., \cite{damanik2012spectral,EL08}.) This regime is of particular interest, since physically realistic models of quasicrystals are typically two- or three-dimensional.

As discussed in \cref{sec:prev_work}, there has been significant recent progress toward establishing \textbf{(S1)}-type spectral enclosures for higher-dimensional models under suitable structural assumptions. By contrast, \textbf{(S2)} is applicable under substantially milder hypotheses. In the present section, we instead work under \textbf{(S2)}, which introduces an additional limiting step in the computational procedure. Nevertheless, the framework developed in \cref{sec:bounded_disp} allows us to quantify the achievable spectral resolution and to extract fine structural information from the approximations.

\subsection{The three models}

As model problems for this paper, we consider the Laplacians of three graphs associated with an infinite Penrose--Robinson triangle tiling of the two-dimensional plane, generated by repeated application of inflation rules from a seed of 20~tiles arranged in a decagon; see~\cite[sect~6.2]{BaakeGrimm2013Vol1}, \cite{Fra08} for details about the inflation rules, and \cref{fig:circ_tri_tiling} for an illustration.

The model $T_1$ comes from treating each tile as a graph node and connecting nodes associated with tiles that share a common edge. All interior nodes have degree~3. The model $T_2$ comes directly from the sides and vertices of the triangles in the tiling; in this case, vertices can have degrees as large as~10. The model $T_3$ is derived from $T_2$ by casting out the longest and shortest triangular edges, resulting in a tiling made up of two distinct rhombus prototiles. \Cref{fig:circ_tri_tiling} gives representative examples of all three graphs.

\begin{figure}
\begin{center}
\includegraphics[width=1.4in]{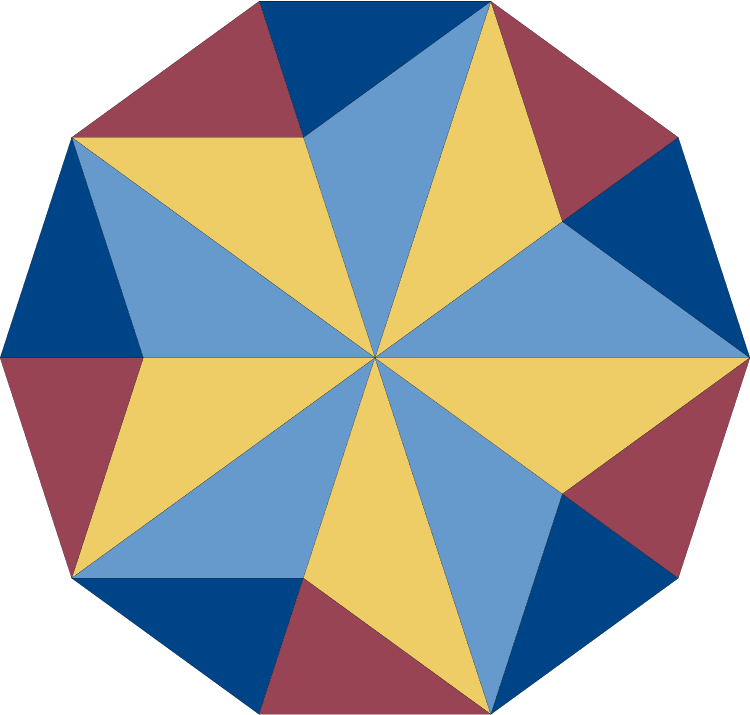}\qquad
\includegraphics[width=1.4in]{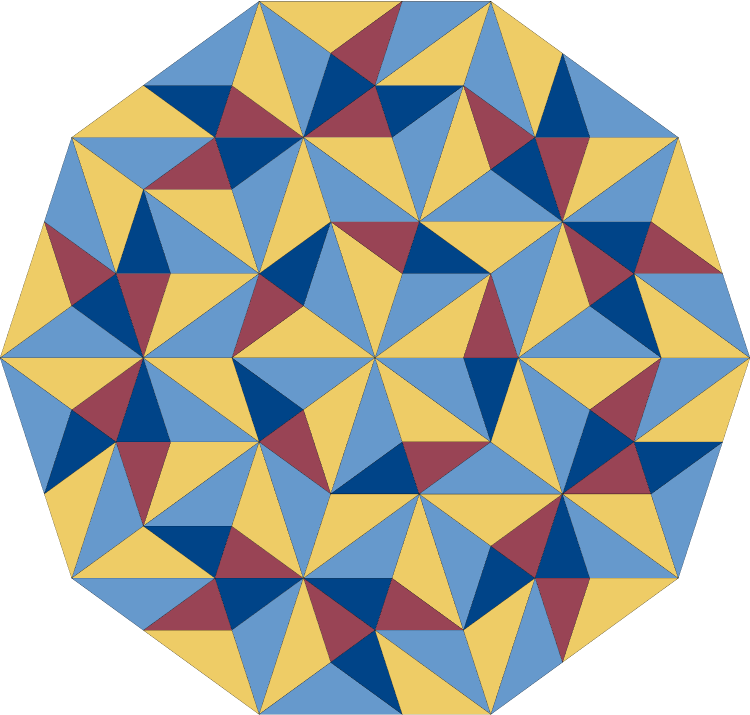}\qquad
\includegraphics[width=1.4in]{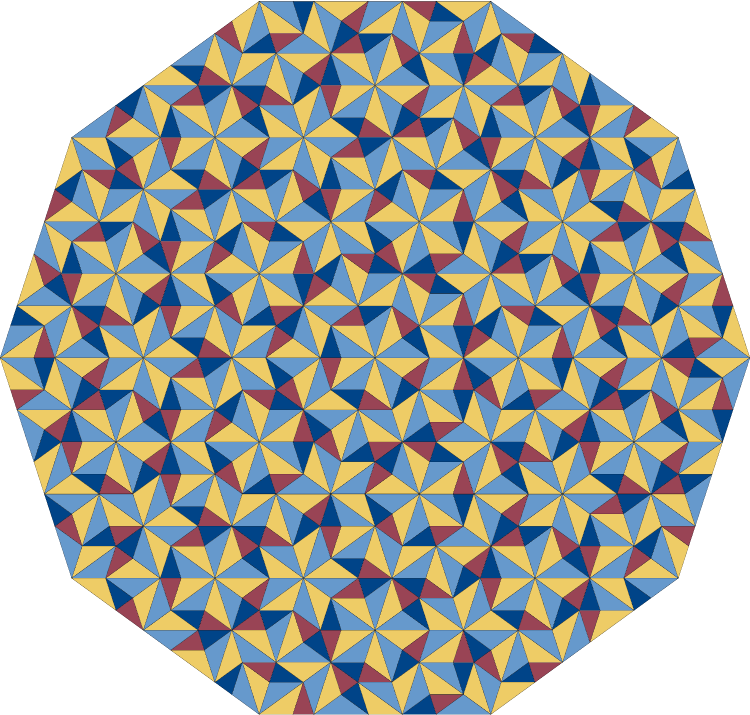}
\end{center}
\vspace*{5pt}
\begin{center}
\begin{minipage}{1.4in}
\begin{center}
\includegraphics[width=1.4in]{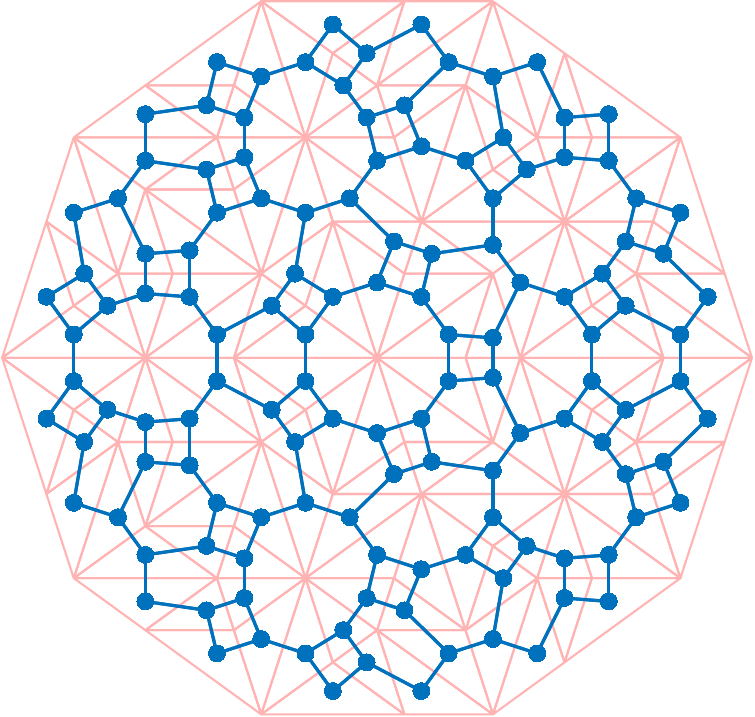}\\
\small $T_1$ tiling, level~2
\end{center}
\end{minipage}\qquad
\begin{minipage}{1.4in}
\begin{center}
\includegraphics[width=1.4in]{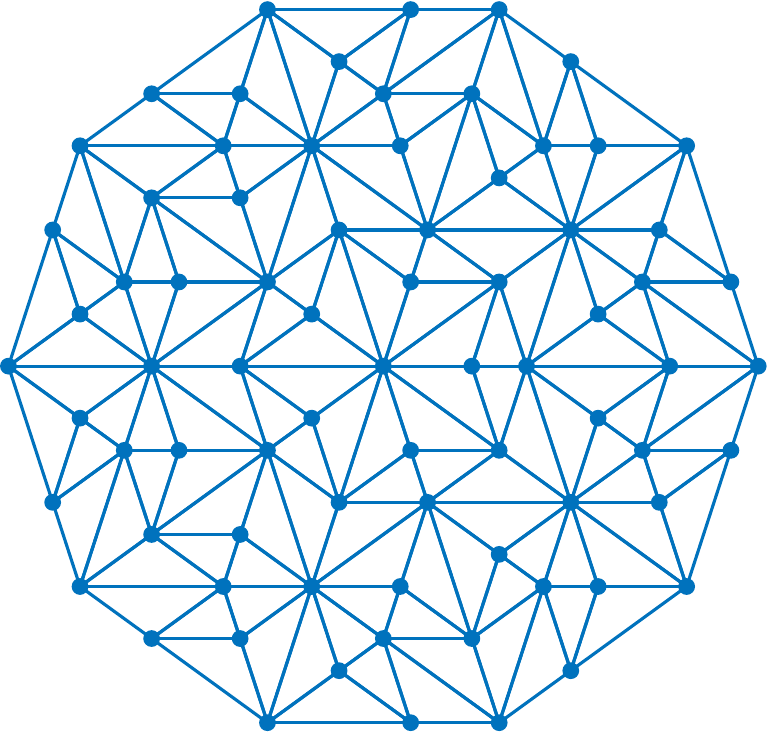}\\
\small $T_2$ tiling, level~2
\end{center}
\end{minipage}\qquad
\begin{minipage}{1.4in}
\begin{center}
\includegraphics[width=1.4in]{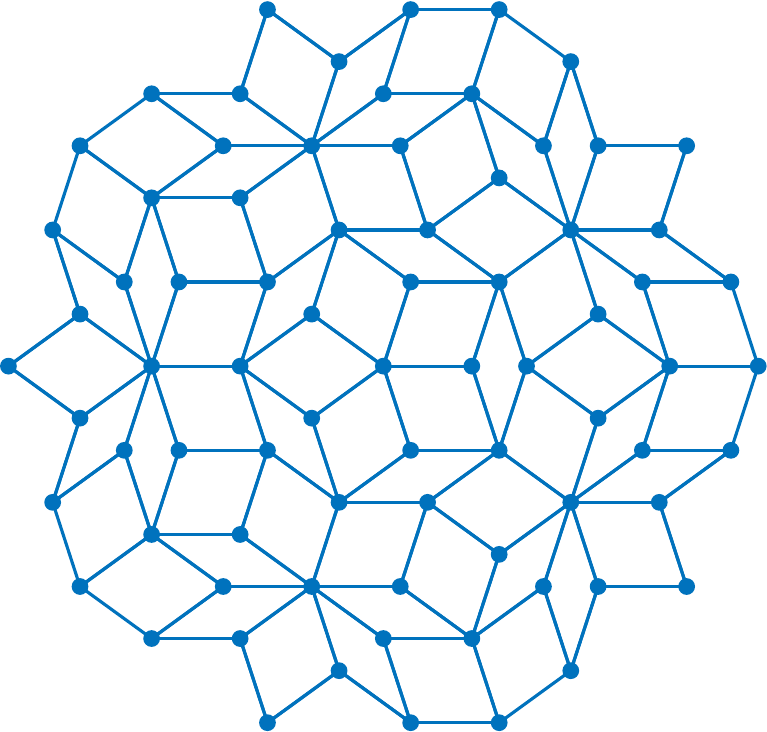}\\
\small $T_3$ tiling, level~2
\end{center}
\end{minipage}
\end{center}

\vspace*{-5pt}
\caption{Top: The 20-tile seed for the Penrose--Robinson triangle tiling, along with the 130 and 890 tile configurations that result from two and four iterations of the inflation rules. Bottom: The graphs for models $T_1$, $T_2$, and $T_3$ corresponding to the middle tiling in \cref{fig:circ_tri_tiling} (two iterations applied to the 20-tile seed). In this case, $T_1$ has 130~nodes and 185~edges; $T_2$ has 76~nodes and 205~edges;
$T_3$ has 76~nodes and 135~edges.}
\label{fig:circ_tri_tiling}
\end{figure}  

For each infinite graph with vertex set $\mathcal{V}$, we consider its graph Laplacian, 
which acts on $\ell^2(\mathcal{V}) = \{(\psi(i))_{i \in \mathcal{V}} : \sum |\psi(i)|^2<\infty\}$ via
$$
[H_k \psi](i) = \sum_{i \sim j} \left(\psi(i) - \psi(j)\right).
$$
Here $i \sim j$ means that the vertices $i$ and $j$ of the graph are connected by an edge, and the subscript $k=1,2,3$ corresponds to the choice of graph model.

\subsection{Computation of the spectrum}

\begin{figure}
\begin{center}
Penrose Tiling Model $T_1$
\bigskip

\includegraphics[width=4.5in]{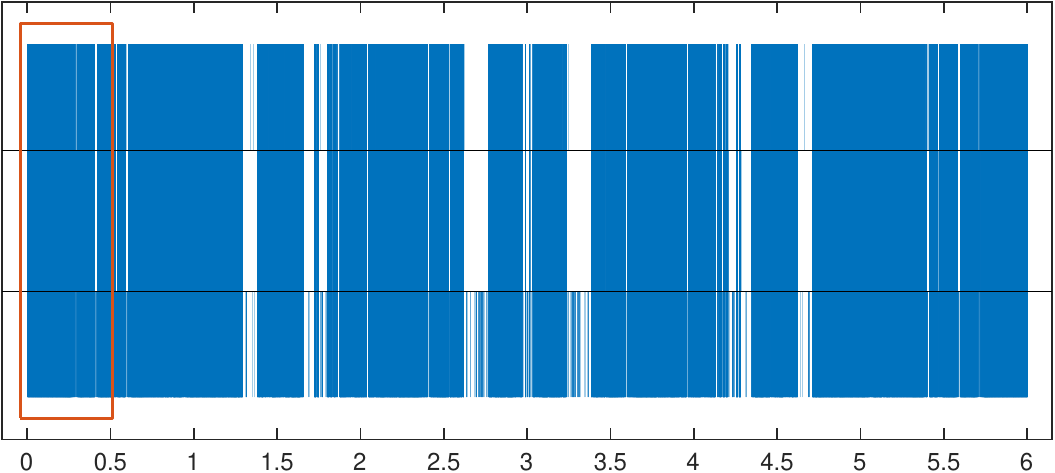}

\begin{picture}(0,0)
\put(-200,141){\rotatebox{90}{\makebox(0,0){\small finite}}}
\put(-190,141){\rotatebox{90}{\makebox(0,0){\small tiling}}}
\put(-200,99){\rotatebox{90}{\makebox(0,0){\small convergent}}}
\put(-190,99){\rotatebox{90}{\makebox(0,0){\small algorithm}}}
\put(-200,56){\rotatebox{90}{\makebox(0,0){\small finite}}}
\put(-190,56){\rotatebox{90}{\makebox(0,0){\small section}}}
\end{picture}
\end{center}
\vspace{-9mm}
\begin{center}
\includegraphics[width=4.5in]{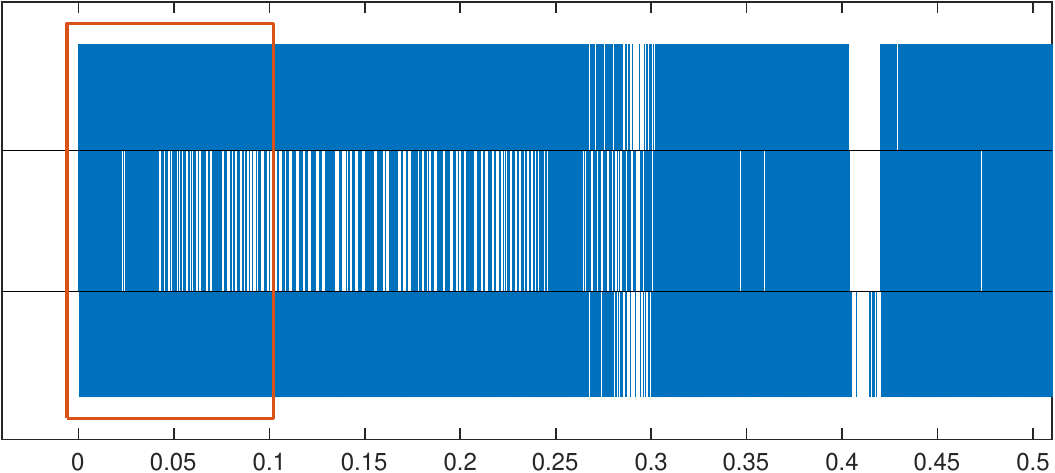}

\begin{picture}(0,0)
\put(-200,141){\rotatebox{90}{\makebox(0,0){\small finite}}}
\put(-190,141){\rotatebox{90}{\makebox(0,0){\small tiling}}}
\put(-200,99){\rotatebox{90}{\makebox(0,0){\small convergent}}}
\put(-190,99){\rotatebox{90}{\makebox(0,0){\small algorithm}}}
\put(-200,56){\rotatebox{90}{\makebox(0,0){\small finite}}}
\put(-190,56){\rotatebox{90}{\makebox(0,0){\small section}}}
\end{picture}
\end{center}
\vspace{-9mm}
\begin{center}
\includegraphics[width=4.5in]{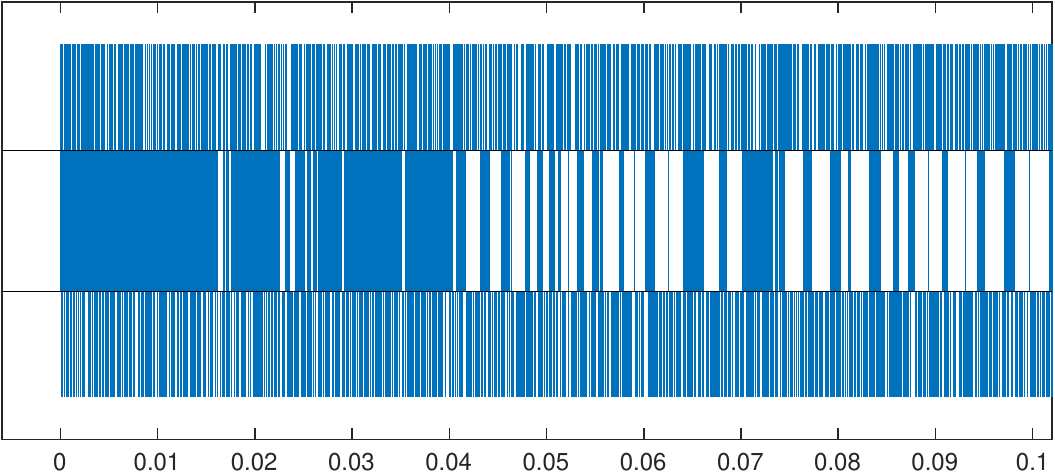}

\begin{picture}(0,0)
\put(-200,141){\rotatebox{90}{\makebox(0,0){\small finite}}}
\put(-190,141){\rotatebox{90}{\makebox(0,0){\small tiling}}}
\put(-200,99){\rotatebox{90}{\makebox(0,0){\small convergent}}}
\put(-190,99){\rotatebox{90}{\makebox(0,0){\small algorithm}}}
\put(-200,56){\rotatebox{90}{\makebox(0,0){\small finite}}}
\put(-190,56){\rotatebox{90}{\makebox(0,0){\small section}}}
\end{picture}
\end{center}\vspace{-5mm}
\caption{Comparison of approximations to the spectrum of Penrose tiling $T_1$, drawn with line widths equaling the true bin widths. The red boxes correspond to the zoom-in region in the plot below. 
The provably convergent algorithm \texttt{CompSpec} (``convergent algorithm'') uses a truncation to $10^6$ sites. The finite section and tiling methods use truncation to $109{,}460$ sites, but clearly exhibit spectral pollution in gaps of the spectrum.}
\label{fig_penrose_spec1}
\end{figure}

\begin{figure}
\begin{center}
Penrose Tiling Model $T_2$
\bigskip

\includegraphics[width=4.5in]{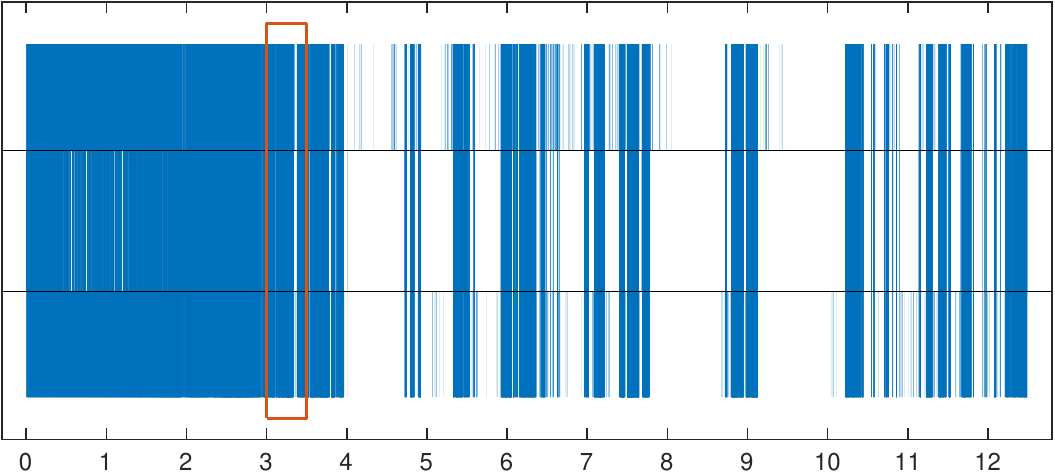}

\begin{picture}(0,0)
\put(-200,141){\rotatebox{90}{\makebox(0,0){\small finite}}}
\put(-190,141){\rotatebox{90}{\makebox(0,0){\small tiling}}}
\put(-200,99){\rotatebox{90}{\makebox(0,0){\small convergent}}}
\put(-190,99){\rotatebox{90}{\makebox(0,0){\small algorithm}}}
\put(-200,56){\rotatebox{90}{\makebox(0,0){\small finite}}}
\put(-190,56){\rotatebox{90}{\makebox(0,0){\small section}}}
\end{picture}
\end{center}
\vspace{-9mm}
\begin{center}
\hspace{1mm}\includegraphics[width=4.59in]{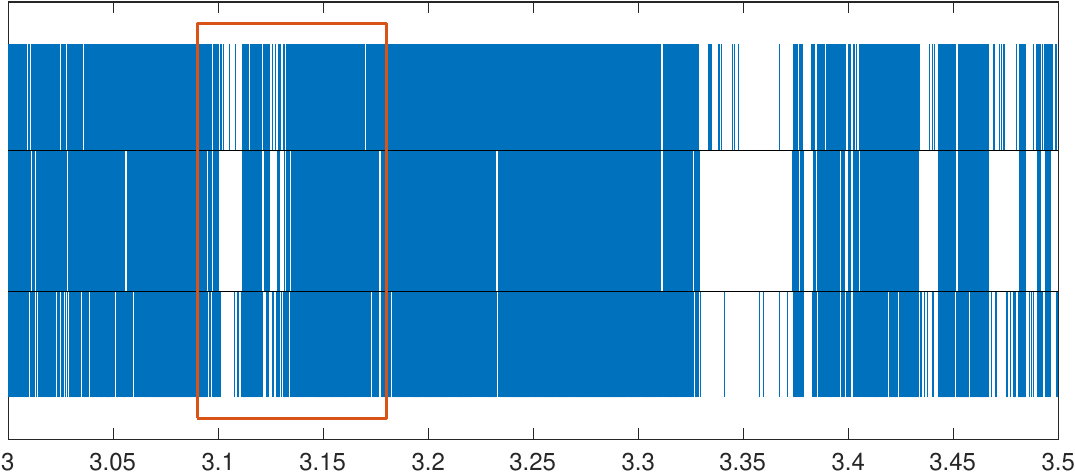}

\begin{picture}(0,0)
\put(-200,141){\rotatebox{90}{\makebox(0,0){\small finite}}}
\put(-190,141){\rotatebox{90}{\makebox(0,0){\small tiling}}}
\put(-200,99){\rotatebox{90}{\makebox(0,0){\small convergent}}}
\put(-190,99){\rotatebox{90}{\makebox(0,0){\small algorithm}}}
\put(-200,56){\rotatebox{90}{\makebox(0,0){\small finite}}}
\put(-190,56){\rotatebox{90}{\makebox(0,0){\small section}}}
\end{picture}
\end{center}
\vspace{-9mm}
\begin{center}
\includegraphics[width=4.7in]{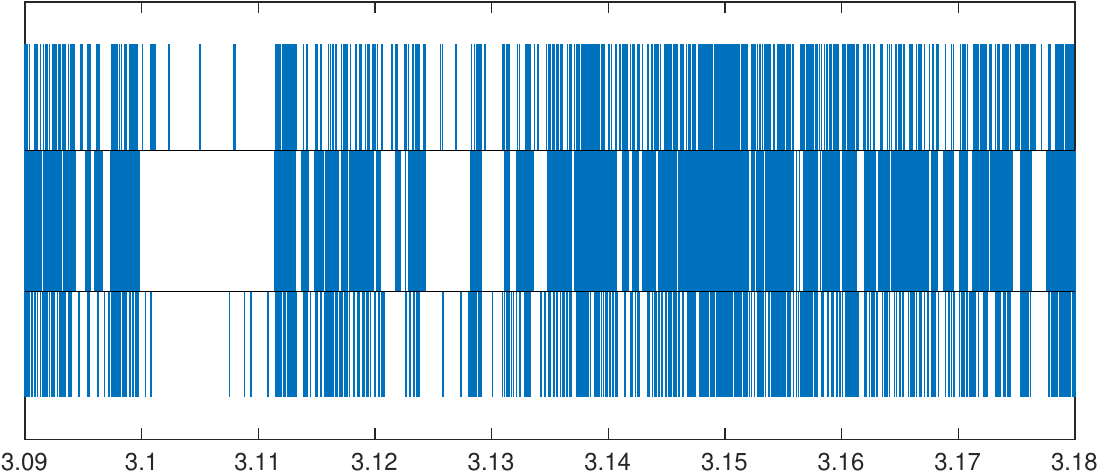}

\begin{picture}(0,0)
\put(-200,141){\rotatebox{90}{\makebox(0,0){\small finite}}}
\put(-190,141){\rotatebox{90}{\makebox(0,0){\small tiling}}}
\put(-200,99){\rotatebox{90}{\makebox(0,0){\small convergent}}}
\put(-190,99){\rotatebox{90}{\makebox(0,0){\small algorithm}}}
\put(-200,56){\rotatebox{90}{\makebox(0,0){\small finite}}}
\put(-190,56){\rotatebox{90}{\makebox(0,0){\small section}}}
\end{picture}
\end{center}\vspace{-5mm}
\caption{Same as \cref{fig_penrose_spec1} but for $T_2$. The provably convergent algorithm \texttt{CompSpec} (``convergent algorithm'') uses a truncation to $10^6$ sites. The finite section and tiling methods use truncation to $143{,}806$ sites, but again
clearly exhibit spectral pollution in gaps of the spectrum.}
\label{fig_penrose_spec2}
\end{figure}

\begin{figure}
\begin{center}
Penrose Tiling Model $T_3$
\bigskip

\includegraphics[width=4.5in]{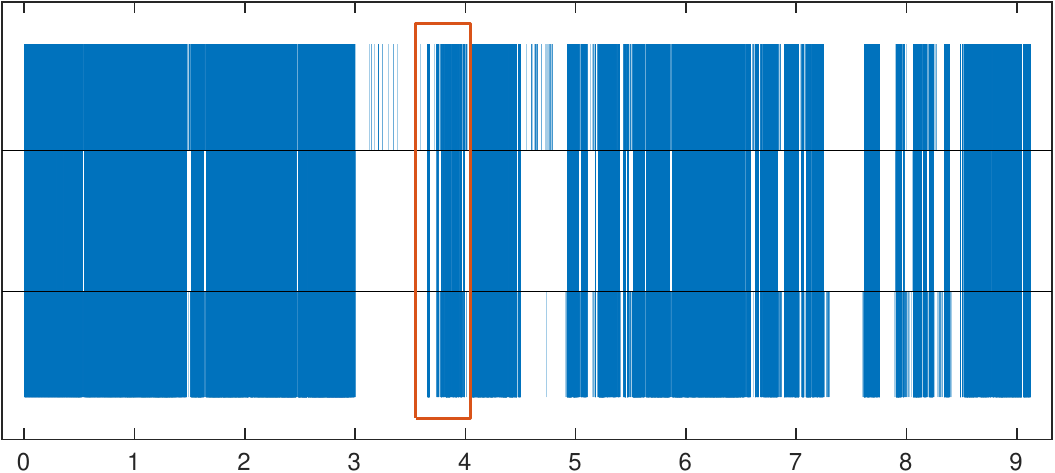}

\begin{picture}(0,0)
\put(-200,141){\rotatebox{90}{\makebox(0,0){\small finite}}}
\put(-190,141){\rotatebox{90}{\makebox(0,0){\small tiling}}}
\put(-200,99){\rotatebox{90}{\makebox(0,0){\small convergent}}}
\put(-190,99){\rotatebox{90}{\makebox(0,0){\small algorithm}}}
\put(-200,56){\rotatebox{90}{\makebox(0,0){\small finite}}}
\put(-190,56){\rotatebox{90}{\makebox(0,0){\small section}}}
\end{picture}
\end{center}
\vspace{-9mm}
\begin{center}
\includegraphics[width=4.69in]{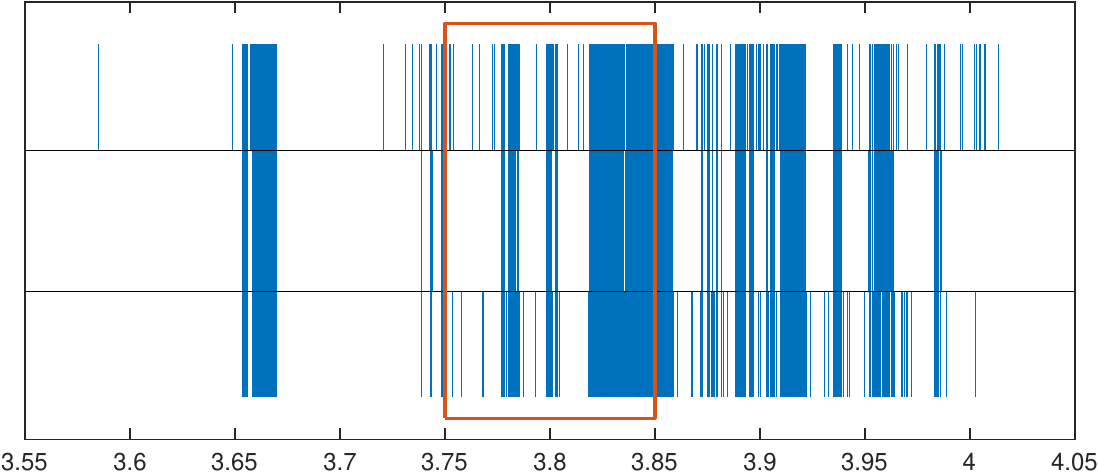}

\begin{picture}(0,0)
\put(-200,141){\rotatebox{90}{\makebox(0,0){\small finite}}}
\put(-190,141){\rotatebox{90}{\makebox(0,0){\small tiling}}}
\put(-200,99){\rotatebox{90}{\makebox(0,0){\small convergent}}}
\put(-190,99){\rotatebox{90}{\makebox(0,0){\small algorithm}}}
\put(-200,56){\rotatebox{90}{\makebox(0,0){\small finite}}}
\put(-190,56){\rotatebox{90}{\makebox(0,0){\small section}}}
\end{picture}
\end{center}
\vspace{-9mm}
\begin{center}
\includegraphics[width=4.69in]{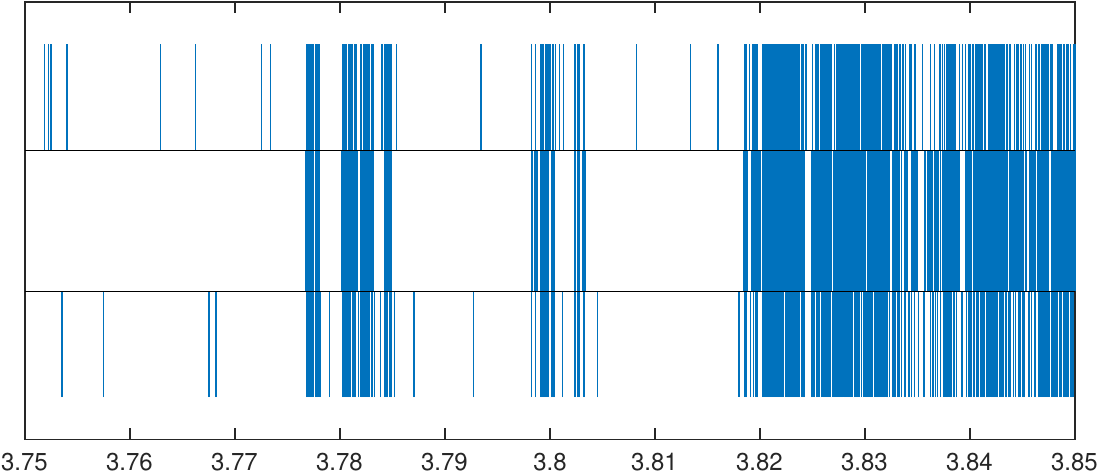}

\begin{picture}(0,0)
\put(-200,141){\rotatebox{90}{\makebox(0,0){\small finite}}}
\put(-190,141){\rotatebox{90}{\makebox(0,0){\small tiling}}}
\put(-200,99){\rotatebox{90}{\makebox(0,0){\small convergent}}}
\put(-190,99){\rotatebox{90}{\makebox(0,0){\small algorithm}}}
\put(-200,56){\rotatebox{90}{\makebox(0,0){\small finite}}}
\put(-190,56){\rotatebox{90}{\makebox(0,0){\small section}}}
\end{picture}
\end{center}\vspace{-8mm}
\caption{Same as \cref{fig_penrose_spec1,fig_penrose_spec2} but for $T_3$. The provably convergent algorithm \texttt{CompSpec} (``convergent algorithm'') uses a truncation to $10^6$ sites. The finite section and tiling methods use truncation to $143{,}806$ sites, and again exhibit spectral pollution in gaps of the spectrum.}
\label{fig_penrose_spec3}
\end{figure}

We first demonstrate why the functions $\Phi_n$ and the method outlined in \cref{sec:bounded_disp} are needed by showing how standard truncation methods do not approximate the spectrum. \cref{fig_penrose_spec1,fig_penrose_spec2,fig_penrose_spec3} show the approximated spectra of the operators $T_1$, $T_2$ and $T_3$, respectively. We have shown the output of: (i) the finite section method, corresponding to truncation of the tiles with free boundary conditions (akin to taking $f(n)=n$ in the framework of \cref{sec:bounded_disp}); (ii) the finite tiling method, corresponding to truncating the tiles and then forming the graph Laplacian of the truncated tiling; and (iii) the output of \texttt{CompSpec} (``convergent algorithm'') from \cite{colb1,colbrook2022foundations}, which performs a local minimization of the functions $\Phi_n$ constructed in \cref{sec:bounded_disp} to compute the spectrum. For each model, we see heavy spectral pollution of the finite section and tiling methods: areas where the ``convergent algorithm'' signals no spectrum, but the ``finite tiling'' or ``finite section'' methods produce spectrum. For example, such pollution is especially prevalent for the model $T_1$ in the plot that zooms in around $[0, 0.1]$; we see many points of pollution between $0.04$ and $0.1$ (bottom of \cref{fig_penrose_spec1}). This pollution destroys the apparent fractal structure of the spectrum.  Indeed, we found that such pollution prevented reliable results when using the finite section and tiling methods for computing properties such as the fractal dimension of the spectrum (e.g., they give the wrong scaling for approximating the box-counting dimension). The results for the finite section and tiling methods involve lengthy computation of all eigenvalues of matrices of dimension $109{,}460$ (model $T_1$) and $143{,}806$ (models $T_2$ and $T_3$); significantly larger truncations would require high-performance computing.

\begin{figure}
\begin{center}
\includegraphics[width=3.7in,trim={0mm 2mm 0mm 2mm},clip]{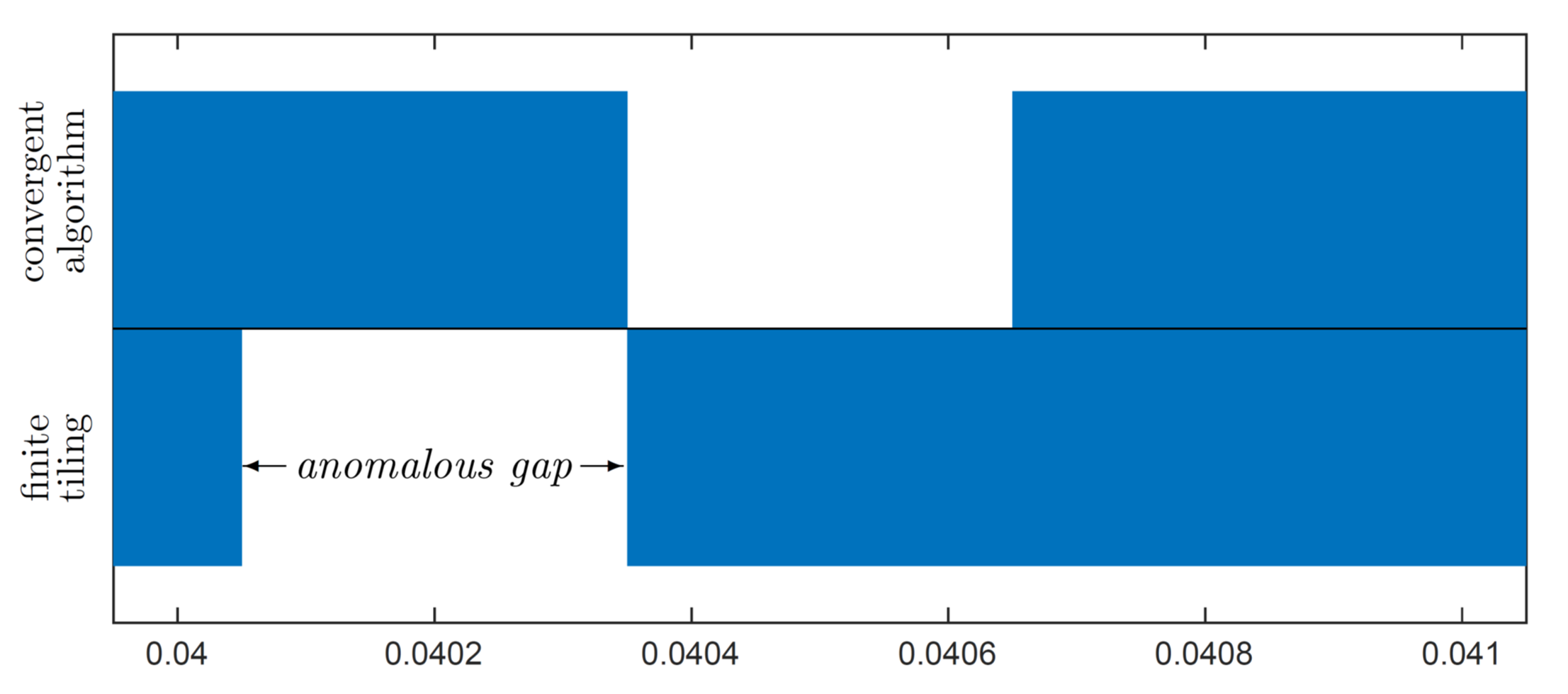}
\end{center}
\caption{Zoom-in of the computed spectrum for $T_1$. It seems curious that the finite tiling method does not show spectrum around $0.0402$, as the convergent algorithm does.}
\label{fig_penrose_spec_zoom}
\end{figure}

While spectral pollution is not surprising, if one looks closely, there are also a few other points where the ``convergent algorithm'' locates spectra, but the ``finite tiling'' or ``finite section'' methods do not. Let us investigate this anomaly in more detail. \cref{fig_penrose_spec_zoom} shows one such case, in the interval $[0.04,0.0406]$. The ``finite tiling'' spectrum is based on level~9 of $T_1$ (109,460~tiles).  The locations of the eigenvalues are ``rounded'' to bins that are analogous to those used by the ``convergent algorithm''; these bins have width $10^{-4}$. In the bins centered at 0.0401, 0.0402, and 0.0403, the ``finite tiling'' has no spectrum, while the ``convergent algorithm'' shows spectrum. We can investigate this situation in more detail using spectrum slicing~\cite[Section~3.3]{Par98}. Let $L_k$ denote the graph Laplacian for the level-$k$ tiling. By computing the inertia of $E-L_k$ for values of $E \in [0.040,0.041]$ spaced at $10^{-5}$ intervals, we can determine intervals of much larger finite tilings that have spectrum in the apparently anomalous region. \cref{fig_penrose_spec_slice} shows the results, up to level~14 (13,462,690 tiles). These computations reveal that the finite tiling eigenvalues eventually fill in the entire region $[0.040,0.041]$. Our expectations from the results of the ``convergent algorithm'' are confirmed.

\begin{figure}
\begin{center}
\includegraphics[width=4in,trim={0mm 2mm 0mm 2mm},clip]{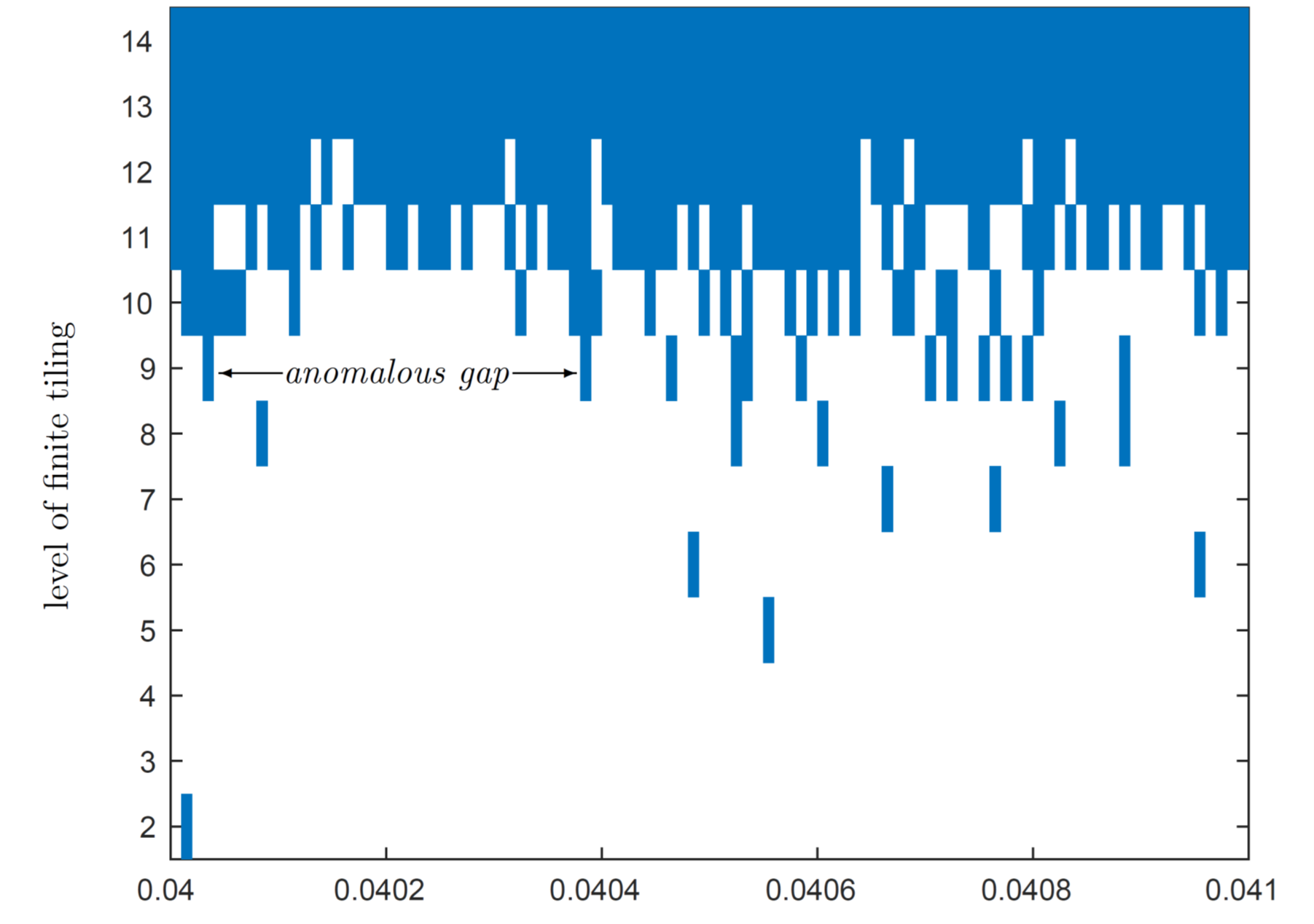}
\end{center}
\caption{Using spectrum slicing, we can compute spectrum in the region $[0.040,0.041]$ for 
the finite tiling method at much larger truncation sizes. At these levels, we see eigenvalues emerge 
in the ``anomalous gap'' in \cref{fig_penrose_spec_zoom}, consistent with our expectations from the
convergent algorithm.}
\label{fig_penrose_spec_slice}
\end{figure}

\begin{figure}
\centering
\raisebox{-0.5\height}{\includegraphics[width=0.441\textwidth,trim={0mm 0mm 0mm 0mm},clip]{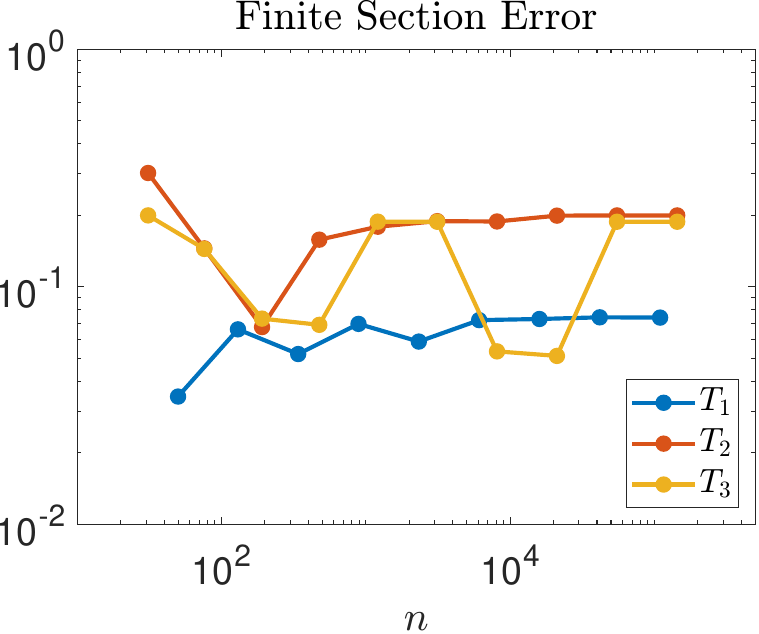}}\hfill
\raisebox{-0.5\height}{\includegraphics[width=0.441\textwidth,trim={0mm 0mm 0mm 0mm},clip]{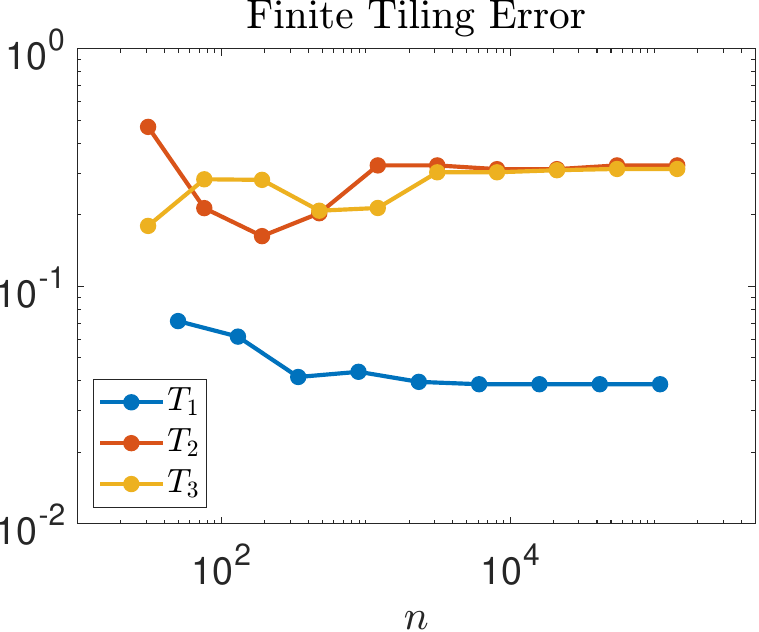}}
\caption{The finite section and finite tiling methods stagnate and do not converge due to spectral pollution. This pollution also means that the finite section and finite tiling methods do not enable us to compute fine spectral features, such as fractal dimensions. Instead, we use the method outlined in \cref{sec:bounded_disp} for more reliable results.}
\label{fig:Pen_FS_error}
\end{figure}

We can use the functions $\Phi_n$ constructed in \cref{sec:bounded_disp} to measure the error of the finite section and tiling methods. \cref{fig:Pen_FS_error} shows the maximum error for different truncation sizes of the tiling, where we have computed the error using $\Phi_{10^6}$, corresponding to a truncation size of $10^6$. We see that neither the finite section nor the tiling methods converge. In contrast, the maximum errors for \texttt{CompSpec} in \cref{fig_penrose_spec1,fig_penrose_spec2,fig_penrose_spec3} are
0.0012,
0.0053, and
0.0021,
respectively. Moreover, these errors provably converge to zero as the truncation size increases.

In summary, there are several advantages of our approach to approximate the spectrum:
\begin{itemize}
	\item Using the functions $\Phi_n$ gives approximations that converge without spectral pollution, avoiding misleading results concerning the size of the spectrum.
	\item The method provides error control, allowing us to approximate the resolution to which we have approximated the spectrum for a given truncation parameter. We shall see that this aids in implementing algorithms that require multiple limits.
\end{itemize}
All of these remarks are typical of $\Sigma_1^A$ convergence to the spectrum. Using $\Phi_n$, we can compute the spectrum and ensure that each portion of the output from our algorithm is within a controllable error of the spectrum.

\subsection{Computation of properties of the spectrum}

Given the truncation parameter $n=10^6$ for computing the functions $\Phi_n$ that satisfy \textbf{(S2)}, we estimate a suitable resolution $\delta=\delta_*$ of the cover $\Gamma_{n_2,n_1}^\spec$ in \eqref{S2_cover_alg} as follows. We run \texttt{CompSpec} to approximate the spectrum. The maximum value of $\Phi_n$ over the output indicates to what resolution the spectrum has been approximated.

\subsubsection{Spectral gaps}

\begin{figure}
\centering
\raisebox{-0.5\height}{\includegraphics[width=0.32\textwidth,trim={0mm 0mm 0mm 0mm},clip]{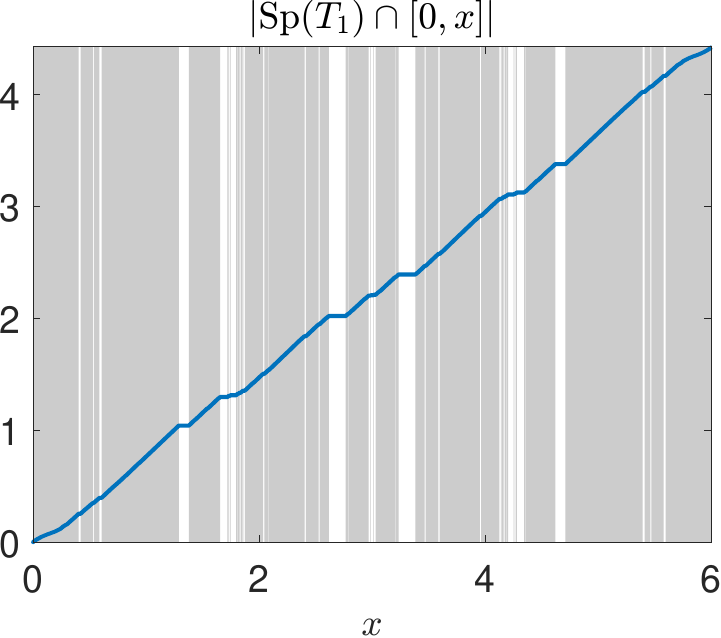}}
\hfill
\raisebox{-0.5\height}{\includegraphics[width=0.32\textwidth,trim={0mm 0mm 0mm 0mm},clip]{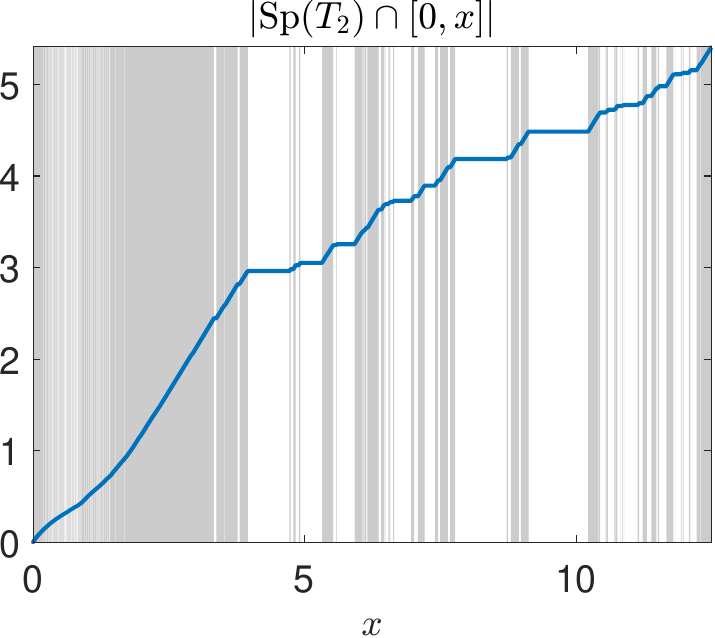}}
\hfill
\raisebox{-0.5\height}{\includegraphics[width=0.32\textwidth,trim={0mm 0mm 0mm 0mm},clip]{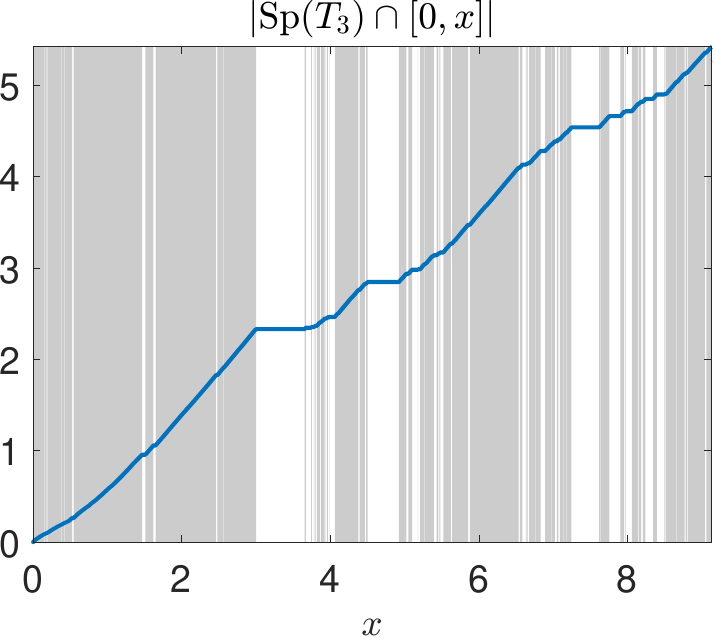}}
\caption{Cumulative Lebesgue measure of the covers $\Gamma_{n_2,n_1}^\spec$ computed using $1/n_2=\delta=3\delta_*$ and $n_1=10^6$. The shaded regions show the covers themselves.}
\label{fig:Pen_Leb_Meas}
\end{figure}

We first consider the number of gaps of the spectrum. Given a value of $\delta$, we compute the number of local maxima of $\Phi_{10^6}$ at least as large as $\delta$, ensuring that these maxima are at least $2\delta$-separated. This tells us the number of gaps of length $\geq 2\delta$. As well as using $\Phi_{10^6}$, we also perform this computation for the finite section and the finite tiling methods, using the functions that correspond to the distance to the set of eigenvalues of each method (with the same truncation parameters as in \cref{fig_penrose_spec1,fig_penrose_spec2,fig_penrose_spec3}). \cref{fig:Pen_gaps} shows the results, where the left vertical dashed line corresponds to $\delta_*$ and the right vertical dashed line corresponds to the maximum of $\Phi_n$ in the convex hull of the output of \texttt{CompSpec}. (This quantity gives an upper bound on half the maximum gap in the spectrum.) \cref{fig:Pen_gaps} provides evidence that each spectrum has infinitely many components. Furthermore, we see that the spectral pollution associated with the finite section and the finite tiling methods distorts the distribution of the size of the gaps.  Thus, the \texttt{CompSpec} approach provides a more reliable tool for investigating gap structure and integrated density of states, compared to the finite tilings used in~\cite[Section~8]{DEFM202XUEXM}.

\begin{figure}
\centering
\raisebox{-0.5\height}{\includegraphics[width=0.32\textwidth,trim={0mm 0mm 0mm 0mm},clip]{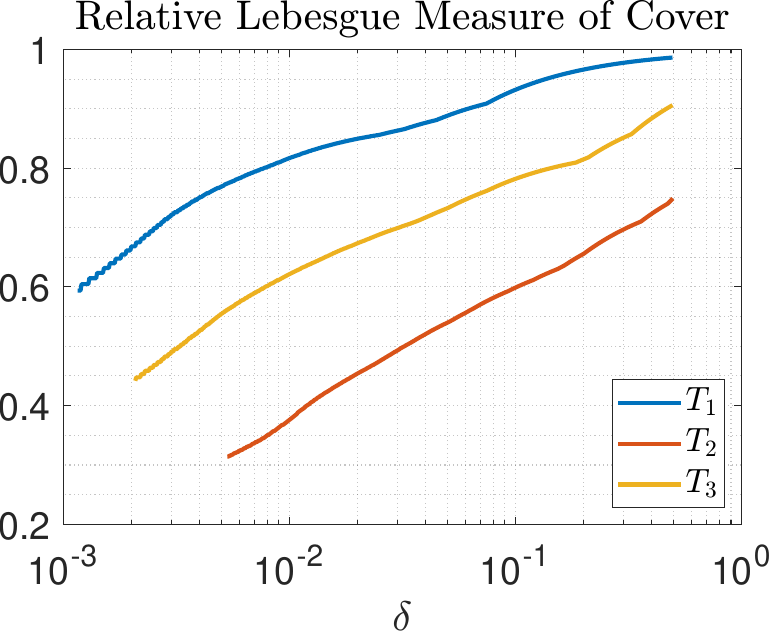}}
\hfill
\raisebox{-0.5\height}{\includegraphics[width=0.32\textwidth,trim={0mm 0mm 0mm 0mm},clip]{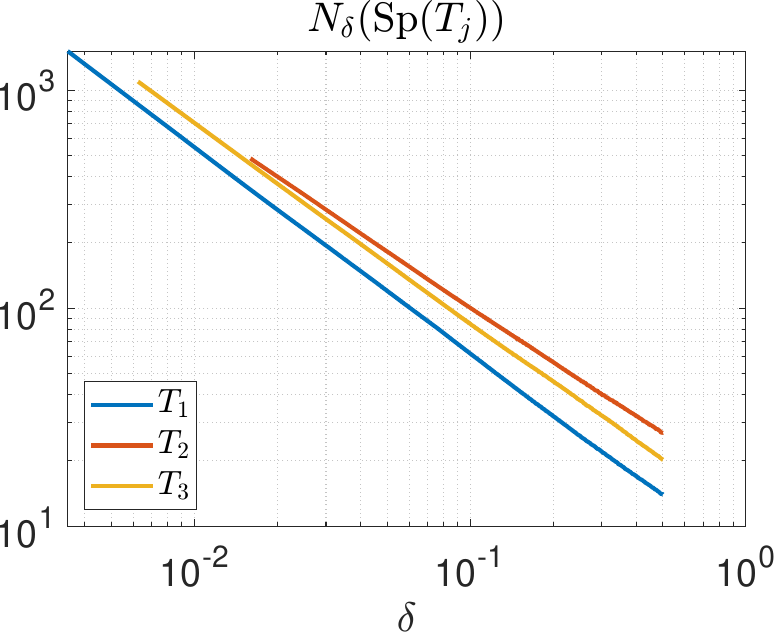}}
\hfill
\raisebox{-0.5\height}{\includegraphics[width=0.32\textwidth,trim={0mm 0mm 0mm 0mm},clip]{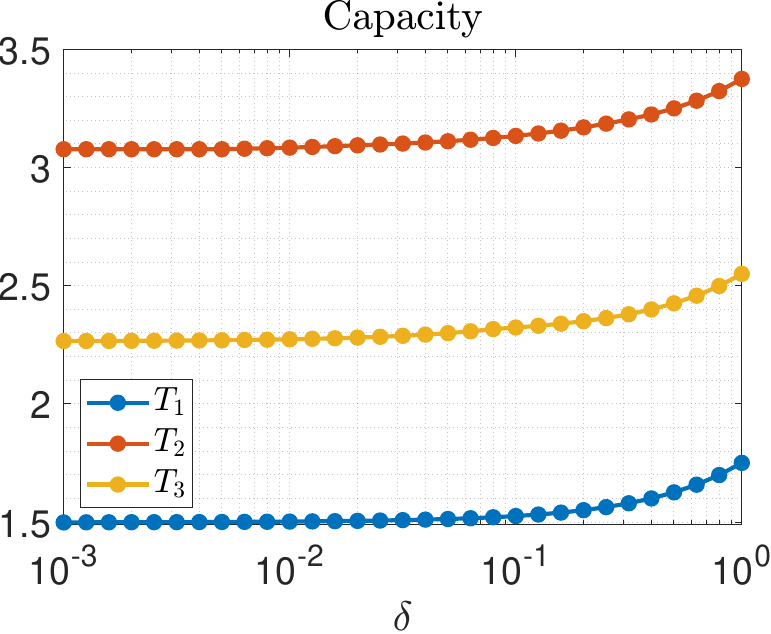}}
\caption{Left: Lebesgue measure of the covers $\Gamma_{n_2,n_1}^\spec$ computed using $n_1=10^6$ and various $\delta=1/n_2$. Middle: The number of $\delta$-mesh intervals that intersect the cover $\Gamma_{n_2,10^6}^\spec$ that has $1/n_2=3\delta_*$. Right: Capacities of the covers $\Gamma_{n_2,n_1}^\spec$ computed using $n_1=10^6$ and various $\delta=1/n_2$.}
\label{fig:Pen_Leb_Meas2}
\end{figure}

\subsubsection{Lebesgue measure}

Next, we consider the Lebesgue measure of the spectrum. We set $n_1=10^6$ and $\delta=1/n_2$ in the covers $\Gamma_{n_2,n_1}^\spec$. We first choose our grid to be points equally spaced at distance $\delta=3\delta_*$. \cref{fig:Pen_Leb_Meas} shows the corresponding cumulative Lebesgue measure for each model, where the covers are shown as the vertical gray boxes. Notice that the Lebesgue measure is much smaller than the width of the spectrum. To investigate this further, \cref{fig:Pen_Leb_Meas2} (left) plots the total Lebesgue measure (normalized by the spectrum width) against $\delta$. This is consistent with the $\Pi_2^A$ convergence and suggests that at the computed resolution (smallest $\delta$), the value of the Lebesgue measure has not converged and is likely smaller.

\subsubsection{Fractal dimension and capacity}

We also consider the box-counting dimension. \cref{fig:Pen_Leb_Meas2} (middle) shows the number of mesh intervals that intersect our cover  $\Gamma_{n_2,n_1}^\spec$. There is evidence of a scaling region with slopes approximately $0.95$, $0.86$, and $0.92$ for $T_1$, $T_2$, and $T_3$, respectively. These results support the conjecture that the Lebesgue measure of the spectrum is zero. Finally, \cref{fig:Pen_Leb_Meas2} (right) shows the $\Pi_2^A$ convergence of capacity, suggesting that the capacity of the spectrum is less than $1.50$, $3.08$, $2.27$ for $T_1$, $T_2$, and $T_3$, respectively.

\section{Final Remarks and Conjectures}
\label{sec:conjectures}

We have developed and demonstrated the first algorithms that are provably optimal for computing various notions of size or complexity of spectra and, more generally, compact sets. These algorithms rely on two sets of assumptions. Under \textbf{(S1)}, we assume access to covers of the set of interest, with control over the distance of these covers to the set in the Hausdorff metric. This assumption is satisfied, for example, by many one-dimensional quasicrystal and aperiodic models, and more broadly in settings where suitable cover constructions are available. Recent advances show that such constructions can, in principle, be realized in certain higher-dimensional settings under appropriate structural assumptions. Under \textbf{(S2)}, we assume access to functions that converge to the distance to the set. This condition is satisfied for a broad class of operators, including higher-dimensional quasicrystal models, but does not provide explicit enclosing covers. Consequently, computations under \textbf{(S2)} generally require an additional limiting step compared to those under \textbf{(S1)}. The SCI hierarchy formalizes this distinction, enabling us to classify the difficulty of computational problems and establish the optimality of our algorithms. For example, by considering the class of limit-periodic discrete Schr\"odinger operators, we proved that our algorithms under \textbf{(S1)} are optimal. Additionally, several of these classifications are suitable for the use of interval arithmetic and computer-assisted proofs.

\subsubsection*{Conjectures}

Some conjectures based on the computational examples of this paper are the following:
\begin{itemize}[leftmargin=0.8cm]
	\item \textbf{Almost Mathieu operator:} We conjecture that
	$$
	\mathrm{dim}_{\mathrm{H}}(\spec_{+}((\sqrt{5}-1)/2,1))=\mathrm{dim}_{\mathrm{B}}(\spec_{+}((\sqrt{5}-1)/2,1))=1/2,
	$$
	whereas
	$$
	\mathrm{dim}_{\mathrm{H}}(\spec_{+}(C,1))< \underline{\mathrm{dim}}_{\mathrm{B}}(\spec_{+}(C,1))=1/2<\overline{\mathrm{dim}}_{\mathrm{B}}(\spec_{+}(C,1))=2/3,
	$$
	where $C$ is Cahen's constant.\vspace{1mm}
	\item \textbf{Fibonacci Hamiltonian:} We conjecture that the box-counting and Hausdorff dimensions of the spectrum of the $d$-dimensional Fibonacci Hamiltonian are equal and satisfy
\begin{equation} \label{eq:dDimFibConj}
\mathrm{dim}_{\mathrm{H}}(\spec(H_\lambda^{(d)}))=\mathrm{dim}_{\mathrm{B}}(\spec(H_\lambda^{(d)}))=\min\left\{1,d\cdot\mathrm{dim}_{\mathrm{B}}(\spec(H_{\lambda}))\right\}.
\end{equation}
A corollary of this result would be that
$$
\lim_{\lambda\rightarrow\infty} \mathrm{dim}_{\mathrm{H}}(\spec(H_\lambda^{(d)}))\cdot\log(\lambda)=d\cdot\log(1+\sqrt{2}).
$$
We also found that $\mathrm{dim}_{\mathrm{B}}(\spec(H_\lambda))\approx 1-0.22\lambda$ for small $\lambda$, $\mathrm{dim}_{\mathrm{B}}(H_{\lambda}^{(2)})$ becomes smaller than one for $\lambda\approx 3.2$, and $\mathrm{dim}_{\mathrm{B}}(H_{\lambda}^{(3)})$ becomes smaller than one for $\lambda\approx 8.7$. Furthermore, we conjecture that the number of connected components of $\spec(H_{\lambda}^{(3)})$ becomes infinite around $\lambda\approx1+\sqrt{2}$ and that there is a region where the number of connected components is infinite, but the fractal dimension is 1.\vspace{1mm}
\item \textbf{Penrose tiles:} We conjecture that for the spectrum of the graph Laplacian of the three tilings, $T_1$, $T_2$, and $T_3$, the number of connected components is infinite and the Lebesgue measure is zero. We also conjecture that the box-counting dimensions are approximately $0.95$, $0.86$, and $0.92$, whilst the capacities are approximately $1.50$, $3.08$, $2.27$ for $T_1$, $T_2$, and $T_3$, respectively.
\end{itemize}

\subsubsection*{Extensions}

There are also several extensions of the methods of this paper for future work:
\begin{itemize}[leftmargin=0.8cm]
\item \textbf{(S1) covers for further two-dimensional quasicrystal models:} Developing explicit and computationally effective \textbf{(S1)} spectral covers for concrete two-dimensional quasicrystal Hamiltonians (beyond the cases currently understood) would lower the SCI classification of several problems considered here. In particular, constructing sharp and implementable covers for models such as the Penrose tiling would enable certified two-sided error control and facilitate high-resolution, computer-assisted spectral analysis.\vspace{1mm}
\item \textbf{Unitary operators:} It is straightforward to extend our techniques to unitary operators by considering the analogue of \textbf{(S1)} and \textbf{(S2)} on the unit circle. There is a growing interest in aperiodic unitary operators, such as the unitary almost Mathieu operator \cite{cedzich2021almost}.\vspace{1mm}
\item \textbf{Unbounded operators:} We can consider convergence in the Attouch--Wets topology, which generalises the Hausdorff metric to general non-empty closed sets and captures locally uniform convergence. It should be possible to extend the techniques of this paper to deal with differential operators on $L^2(\mathbb{R}^d)$, such as Schr\"odinger operators.\vspace{1mm}
\item \textbf{Higher-dimensional models:} The methods in this paper that are based on \textbf{(S2)} are entirely general and readily applicable to quasicrystal models in three and higher dimensions, which have been understudied due to their complexity. We anticipate that this approach will not only advance the understanding of higher-dimensional quasicrystal models but also pave the way for new algorithms to tackle their challenging spectral properties. 
\end{itemize}

\small

\renewcommand{\baselinestretch}{0.97}

\bibliography{aperiodic}
\bibliographystyle{abbrv}
\end{document}